\newtheorem{theorem}{Theorem}[section]
\newtheorem{lemma}[theorem]{Lemma}
\newtheorem{prop}[theorem]{Proposition}
\newtheorem{cor}[theorem]{Corollary}
\theoremstyle{remark}
\newtheorem{definition}[theorem]{Definition}
\newtheorem{example}[theorem]{Example}
\newtheorem{remark}[theorem]{Remark}
\def\N{{\mathbb N}}
\def\Q{{\mathbb Q}}
\def\C{{\mathbb C}}
\def\F{{\mathbb F}}
\def\Zp{{{\mathbb Z}_p}}
\def\Zq{{{\mathbb Z}_q}}
\def\Z{{\mathbb Z}}
\def\A{{\mathcal{A}}}
\def\U{{\mathcal{U}}}
\def\K{{\mathcal{K}}}
\def\H{{\mathcal{H}}}
\def\T{{\mathcal{T}}}
\def\I{{\mathcal{I}}}
\def\L{{\mathcal{L}}}
\def\M{{\mathcal{M}}}
\def\c{{\bf c}}
\newcommand{\clsp}{\overline{\operatorname{span}}}
\newcommand{\lsp}{\operatorname{span}}
\newcommand{\rt}{\operatorname{rt}}
\newcommand{\lt}{\operatorname{lt}}
\newcommand{\id}{\operatorname{id}}
\newcommand{\bicov}{\operatorname{bicov}}
\newcommand{\piso}{\operatorname{piso}}
\newcommand{\iso}{\operatorname{iso}}
\newcommand{\End}{\operatorname{End}}
\newcommand{\order}{{o}}
\newcommand{\whitesquare}{\hfill $\whitesquare$\newline\vspace{0.4cm}}
\def\newspan{\operatorname{span}}
\numberwithin{equation}{section}
\begin{document}

\title[Partial-isometric crossed products]
{Partial-isometric crossed products of dynamical systems by left LCM semigroups}

\author[Saeid Zahmatkesh]{Saeid Zahmatkesh}
\address{Department of Mathematics, Faculty of Science, King Mongkut's University of Technology Thonburi, Bangkok 10140, THAILAND}
\email{saeid.zk09@gmail.com, saeid.kom@kmutt.ac.th}



\subjclass[2010]{Primary 46L55}
\keywords{LCM semigroup, crossed product, endomorphism, Nica covariance, partial isometry}
\maketitle
\begin{center}
\emph{Dedicated to Sriwulan Adji}
\end{center}

\begin{abstract}
Let P be a left LCM semigroup, and $\alpha$ an action of $P$ by endomorphisms of a $C^*$-algebra $A$. We study a semigroup
crossed product $C^*$-algebra in which the action $\alpha$ is implemented by partial isometries. This crossed product gives a model
for the Nica-Teoplitz algebras of product systems of Hilbert bimodules (associated with semigroup dynamical systems) studied
first by Fowler, for which we provide a structure theorem as it behaves well under short exact sequences and tensor products.
\end{abstract}

\section{Introduction}
\label{intro}
Let $P$ be a unital semigroup whose unit element is denoted by $e$. Suppose that $(A,P,\alpha)$ is a dynamical system consisting
of a $C^*$-algebra $A$, and an action $\alpha:P\rightarrow \End (A)$ of $P$ by endomorphisms of $A$ such that
$\alpha_{e}=\id_{A}$. Note that, since the $C^*$-algebra $A$ is not necessarily unital, we need to assume that each endomorphism $\alpha_{x}$ is extendible, which means that it extends to a strictly continuous endomorphism $\overline{\alpha}_{x}$ of the multiplier algebra $\M(A)$.
Recall that an endomorphism $\alpha$ of $A$ is extendible if and only if there exists an approximate identity $\{a_{\lambda}\}$ in $A$ and a projection $p\in \M(A)$ such that $\alpha(a_{\lambda})$ converges strictly to $p$ in $\M(A)$. However, the extendibility of $\alpha$ does not necessarily imply $\overline{\alpha}(1_{\M(A)})=1_{\M(A)}$.

There have been huge efforts on the study of $C^*$-algebras associated with semigroups and semigroup dynamical systems.
In the line of those efforts in this regard, Fowler
in \cite{Fowler}, for the dynamical system $(A,P,\alpha)$, where $P$ is the positive cone of a group $G$ such that $(G,P)$
is quasi-lattice ordered in the sense of Nica \cite{Nica}, defined a covariant representation called the
\emph{Nica-Toeplitz covariant representation} of the system, such that the endomorphisms $\alpha_{x}$ are implemented by partial isometries.
He then showed that there exists a universal $C^*$-algebra $\T_{\textrm{cov}}(X)$ associated with the system
generated by a universal Nica-Toeplitz covariant representation of the system such that there is a bijection between the Nica-Toeplitz
covariant representations of the system and the nondegenerate representations of $\T_{\textrm{cov}}(X)$.
To be more precise, $X$ is actually the product system of Hilbert bimodules associated with the system $(A,P,\alpha)$ introduced by him, and the algebra $\T_{\textrm{cov}}(X)$ is universal for Toeplitz representations of $X$ satisfying a covariance condition called
\emph{Nica covariance}. He called this universal algebra the \emph{Nica-Toeplitz crossed product} (or \emph{Nica-Toeplitz algebra})
of the system $(A,P,\alpha)$ and denoted it by $\T_{\textrm{cov}}(A\times_{\alpha} P)$.
When the group $G$ is totally ordered and abelian (with the positive cone $G^{+}=P$), the Nica covariance condition
holds automatically, and the Toeplitz algebra $\T(X)$ is the \emph{partial-isometric crossed product}
$A\times_{\alpha}^{\piso} P$ of the system $(A,P,\alpha)$ introduced and studied by the authors of \cite{LR}.
In other word, the semigroup crossed product $A\times_{\alpha}^{\piso} P$ actually gives a model for the Teoplitz algebras $\T(X)$ of
product systems $X$ of Hilbert bimodules associated with the systems $(A,P,\alpha)$, where $P$ is the positive cone of a totally ordered
abelian group $G$. Further studies on the structure of the crossed product $A\times_{\alpha}^{\piso} P$
have been done progressively in \cite{Adji-Abbas}, \cite{AZ}, \cite{AZ2}, \cite{LZ}, and \cite{SZ} since then.

In the very recent years, mathematicians in \cite{NB, Bar1, Bar2}, following the idea of Fowler,
have extended and studied the notion of the Nica-Toeplitz algebra of a product system $X$ over more general semigroups $P$, namely,
right LCM semigroups (see also \cite{Flet}). These are the semigroups that appear as a natural generalization of the well-known
notion of quasi-latticed ordered groups introduced first by Nica in \cite{Nica}.
Recall that the notation $\mathcal{N}\T(X)$ is used for the Nica-Toeplitz algebra of $X$ in \cite{NB, Bar1, Bar2}, which are
the works that brought this question to our attention that whether we could define a partial-isometric crossed product
corresponding to the system $(A,P,\alpha)$, where the semigroup $P$ goes beyond the positive cones of totally ordered abelian
groups. Although, based on the work of Fowler in \cite{Fowler} (see also the effort in \cite{Adji-ICM} in this direction),
we were already aware that the answer to this question
must be ``yes" for the positive cones $P$ of quasi-latticed ordered groups $(G,P)$, \cite{NB, Bar1, Bar2} made us very
enthusiastic to seek even more than that. Hence, the initial investigations in the present work indicated that the semigroup
$P$ must be left LCM (see \S\ref{sec:pre}). More precisely, in the dynamical system $(A,P,\alpha)$, we considered the semigroup $P$
to be left LCM (so, the opposite semigroup $P^{\textrm{o}}$ becomes right LCM). Then, following \cite{Fowler}, we defined a covariant
representation of the system satisfying a covariance condition called the \emph{(right) Nica covariance}, in which the endomorphisms
$\alpha_{x}$ are implemented by partial isometries. We called this representation the \emph{covariant partial-isometric
representation} of the system. More importantly, we showed that every system $(A,P,\alpha)$ admits a
nontrivial covariant partial-isometric representation. Next, we proved that the Nica-Toeplitz algebra $\mathcal{N}\T(X)$ of
the product system $X$ associated with the dynamical system $(A,P,\alpha)$ is generated by a covariant partial-isometric
representation of the system which is universal for covariant partial-isometric representations of the system. We called
this universal algebra the \emph{partial-isometric crossed product} of the system $(A,P,\alpha)$ and denoted it by
$A\times_{\alpha}^{\piso} P$ (following \cite{LR}), which is unique up to isomorphism. We then studied the behavior of
crossed product $A\times_{\alpha}^{\piso} P$ under short exact sequences and tensor products, from which, a structure theorem followed.
In addition, as an example, when $P$ and $P^{\textrm{o}}$ are both left LCM semigroups we studied the distinguished system
$(B_{P},P,\tau)$, where $B_{P}$ is the $C^*$-subalgebra of $\ell^{\infty}(P)$ generated by the characteristic functions
$\{1_{y}: y\in P\}$, and the action $\tau$ on $B_{P}$ is induced by the shift on $\ell^{\infty}(P)$. It was shown that
the algebra $B_{P}\times_{\tau}^{\piso} P$ is universal for \emph{bicovariant partial-isometric representations} of $P$, which
are the partial-isometric representations of $P$ satisfying both right and left Nica covariance conditions.

Here, prior to talking about the organization of the present work, we would like to mention that, by \cite{SZ2},
if $P$ is the positive cone of an abelian lattice-ordered group $G$, then the Nica-Toeplitz algebra $\T_{\textrm{cov}}(A\times_{\alpha} P)$
of the system $(A,P,\alpha)$ is a full corner in a classical crossed product by the group $G$. Thus, by the present work, since
$A\times_{\alpha}^{\piso} P\simeq \T_{\textrm{cov}}(A\times_{\alpha} P)$, the same corner realization holds for the partial-isometric
crossed products of the systems $(A,P,\alpha)$ consisting of the positive cones $P$ of abelian lattice-ordered
groups (see also \cite{SZ}).

Now, the present work as an extension of the idea in \cite{LR} follows the framework of \cite{Larsen} for partial-isometric
crossed products. We begin with a preliminary section containing a summary on LCM semigroups and discrete product systems of Hilbert bimodules.
In section \ref{sec:Nica-piso} and \ref{sec:Piso-CP}, for the system $(A,P,\alpha)$ with a left LCM semigroup $P$, a covariant
representation of the system is defined which satisfies a covariance condition called the \emph{(right) Nica covariance}, where the endomorphisms
$\alpha_{x}$ are implemented by partial isometries. This representation is called the \emph{covariant partial-isometric representation}
of the system. We also provide an example which shows that every system admits a nontrivial covariant partial-isometric representation.
Then, we show that there is a $C^*$-algebra $B$ associated with the system generated by a covariant partial-isometric
representation of the system which is universal for covariant partial-isometric representations of the system, in the sense that
there is a bijection between the covariant partial-isometric representations of the system and the nondegenerate
representations of the $C^*$-algebra $B$. This universal algebra $B$ is called the \emph{partial-isometric crossed product} of
the system $(A,P,\alpha)$ and denoted by $A\times_{\alpha}^{\piso} P$, which is unique up to isomorphism. We also show that
this crossed product behaves well under short exact sequences. In section \ref{sec:tensor}, we show that under some certain conditions
the crossed product $(A\otimes_{\textrm{max}} B)\times^{\piso}(P\times S)$ can be decomposed as the maximal tensor product of the crossed
products $A\times^{\piso} P$ and $B\times^{\piso} S$. Also, when $P$ and the opposite semigroup $P^{\textrm{o}}$ are both left LCM
we consider the distinguished system $(B_{P},P,\tau)$, where $B_{P}$ is the $C^*$-subalgebra of $\ell^{\infty}(P)$ generated
by the characteristic functions $\{1_{y}: y\in P\}$, and the action $\tau$ on $B_{P}$ is induced by the shift on $\ell^{\infty}(P)$.
Note that each $1_{y}$ is actually the characteristic function of the right ideal $yP=\{yx: x\in P\}$ in $P$.
We then show that the crossed prodcut $B_{P}\times_{\tau}^{\piso} P$ is universal for \emph{bicovariant partial-isometric representations} of $P$,
which are the partial-isometric representations of $P$ satisfying both right and left Nica covariance conditions.
In section \ref{sec:ideals-tensor}, for the crossed product $(A\otimes_{\max} B)\times^{\piso} P$ a composition series
$$0\leq \I_{1}\leq \I_{2} \leq (A\otimes_{\max} B)\times^{\piso} P$$
of ideals is obtained, for which we identify the subquotients
$$\I_{1},\ \ \ \I_{2}/\I_{1},\ \ \textrm{and}\ \ ((A\otimes_{\max} B)\times^{\piso} P)/\I_{2}$$
with familiar terms. Finally in section \ref{sec:apply}, as an application, we study the partial-isometric crossed product of the
dynamical system considered in \cite{LPR}.

\section{Preliminaries}
\label{sec:pre}


\subsection{LCM semigroups}
\label{subsec:lcm}

Let $P$ be a discrete semigroup. We assume that $P$ is unital, which means that there is an element $e\in P$ such that $xe=ex=x$ for all $x\in P$. Recall that $P$ is called \emph{right cancellative} if $xz=yz$, then $x=y$ for every $x,y,z\in P$.

\begin{definition}
\label{lcm-def}
A unital semigroup $P$ is called left LCM (least common multiple) if it is right cancellative and for every
$x,y\in P$, we have either $Px\cap Py=\emptyset$ or $Px\cap Py=Pz$ for some $z\in P$.
\end{definition}

Let $P^{*}$ denote the set of all invertible elements of $P$, which is obviously not empty as $e\in P^{*}$. In fact, $P^{*}$
is a group with the action inherited from $P$. Now, if $Px\cap Py=Pz$, since $z=ez\in Pz$, we have $sx=z=ty$ for some
$s,t\in P$. So, $z$ can be viewed as a least common left multiple of $x,y$. However, such a least common left multiple may not be
unique. Actually one can see that if $z$ and $\tilde{z}$ are both least common left multiples of $x,y$, then there is an
invertible element $u$ of $P$ ($u\in P^{*}$) such that $\tilde{z}=uz$.
Note that right LCM semigroups are defined similarly.
A unital semigroup $P$ is called \emph{right LCM} if it is left cancellative and for every
$x,y\in P$, we have either $xP\cap yP=\emptyset$ or $xP\cap yP=zP$ for some $z\in P$.
Let $P^{\textrm{o}}$ denote the opposite semigroup endowed with the action $\bullet$ such that $x\bullet y=yx$ for all $x,y\in P^{\textrm{o}}$.
Clearly, $P$ is a left LCM semigroup if and only if $P^{\textrm{o}}$ is a right LCM semigroup.

LCM semigroups actually appear as a natural generalization of the well-known notion of quasi-latticed ordered groups introduced
first by Nica in \cite{Nica}. Let $G$ be a group and $P$ a unital subsemigroup such that $P\cap P^{-1}=\{e\}$. There is a partial order
on $G$ defined by $P$ such that
$$x\leq_{\rt} y \Longleftrightarrow yx^{-1}\in P \Longleftrightarrow y\in Px \Longleftrightarrow Py\subseteq Px$$
for all $x,y\in G$. Moreover, we have
$$x\leq_{\rt} y \Longleftrightarrow xz\leq_{\rt} yz$$
for all $x,y,z\in G$, which means that this partial order is right-invariant. The partial order $\leq_{\rt}$ on $G$ is said to be a
\emph{right quasi-lattice order} if every finite subset of $G$ which has an upper bound in $P$ has a least upper bound in $P$.
In this case, the pair $(G,P)$ is called a \emph{right quasi-lattice ordered group}. Note that a left-invariant partial order
on $G$ is also defined by $P$ such that $x\leq_{\lt} y$ iff $x^{-1}y\in P$. So, a \emph{left quasi-lattice ordered group}
$(G,P)$ can be defined similarly. Now, it is not difficult to see that
if $(G,P)$ is a right quasi-lattice ordered group, then the semigroup $P$ is a left LCM semigroup.
Similarly, if $(G,P)$ is a left quasi-lattice ordered group, then $P$ is a right LCM semigroup.
Note that, however, a LCM semigroup $P$ is not necessarily embedded in a group $G$
such that $(G,P)$ is a quasi-lattice ordered group (see more in \cite{B-Li}).

\begin{example}
\label{lcm1}
For any positive integer $k$, the positive cone $\N^{k}$ of the abelian lattice-ordered group $\Z^{k}$ is a left LCM semigroup, such that
$$\N^{k}x \cap \N^{k}y=\N^{k}(x\vee y)$$
for all $x,y\in \N^{k}$, where $x\vee y$ denotes the supremum of $x$ and $y$ given by $(x\vee y)_{i}=\max\{x_{i},y_{i}\}$ for $1\leq i\leq k$.
\end{example}

\begin{example}
\label{lcm2}
Let $\N^{\times}$ denote the set of positive integers, which is a unital (abelian) semigroup with the usual multiplication. It is indeed
a left LCM semigroup such that
$$\N^{\times}r \cap \N^{\times}s=\N^{\times} (r\vee s)$$
for all $r,s\in \N^{\times}$, where
$$r\vee s=\textrm{the least common multiple of numbers}\ r\ \textrm{and}\ s.$$
In fact, if $\Q_{+}^{\times}$ denotes the abelian multiplicative group of positive rationals, then it is a lattice-ordered group with
$$r\leq s\Longleftrightarrow (\frac{s}{r})\in \N^{\times} \Longleftrightarrow sr^{-1}\in \N^{\times}$$
for all $r,s\in \Q_{+}^{\times}$. So, $\N^{\times}$ is actually the positive cone of $\Q_{+}^{\times}$, namely,
$$\N^{\times}=\{r\in \Q_{+}^{\times}: r\geq 1\}.$$
\end{example}

\begin{example}
\label{free-G}
Assume that $n$ is an integer such that $n\geq 2$. Let $\F_{n}$ be the free group on $n$ generators $\{a_{1}, a_{2},..., a_{n}\}$, and
$\F_{n}^{+}$ the unital subsemigroup of $\F_{n}$ generated by the nonnegative powers of $a_i$'s. Then, for the right-invariant
partial order $\leq_{\rt}$ on $\F_{n}$ defined by $\F_{n}^{+}$, we have $x\leq_{\rt}y$ if and only if $x$ is a final string on the right
of $y$, whit which, $(\F_{n},\F_{n}^{+})$ is a right quasi-lattice ordered group. Thus, $\F_{n}^{+}$ is a left LCM semigroup.
The left-invariant partial order $\leq_{\lt}$ on $\F_{n}$ is given such that $x\leq_{\lt}y$ if and only if $x$ is an initial string on the left
of $y$, whit which, $(\F_{n},\F_{n}^{+})$ is a left quasi-lattice ordered group. So, in this case, $\F_{n}^{+}$ is a right LCM semigroup.
\end{example}

\subsection{Discrete product systems of Hilbert bimodules}
\label{subsec:prod-syst}

A \emph{Hilbert bimodule} over a $C^*$-algebra $A$ is a right Hilbert $A$-module $X$ together with a
homomorphism $\phi:A\rightarrow \L(X)$ which defines a left action of $A$ on $X$ by $a\cdot x=\phi(a)x$ for all $a\in A$ and
$x\in X$. A \emph{Toeplitz representation} of $X$ in a $C^*$-algebra $B$ is a pair $(\psi,\pi)$ consisting of a linear
map $\psi:X\rightarrow B$ and a homomorphism $\pi:A\rightarrow B$ such that
$$\psi(x\cdot a)=\psi(x)\pi(a),\ \ \psi(x)^{*}\psi(y)=\pi(\langle x,y\rangle_{A}),\ \ \textrm{and}\ \ \psi(a\cdot x)=\pi(a)\psi(x)$$
for all $a\in A$ and $x,y\in X$. Then, there is a homomorphism (Pimsner homomorphism) $\rho:\K(X)\rightarrow B$ such that
\begin{align}
\label{pimsner-hom}
\rho(\Theta_{x,y})=\psi(x)\psi(y)^{*}\ \ \ \textrm{for all}\ x,y\in X.
\end{align}

The \emph{Toeplitz algebra} of $X$ is the $C^*$-algebra $\T(X)$ which is universal for Toeplitz representations
of $X$ (see \cite{pimsner, Fow-Rae}).

Recall that every right Hilbert $A$-module $X$ is essential, which means that we have
$$X=\clsp\{x\cdot a: x\in X, a\in A\}.$$
Moreover, a Hilbert bimodule $X$ over a $C^*$-algebra $A$ is called \emph{essential} if
$$X=\clsp\{a\cdot x: a\in A, x\in X\}=\clsp\{\phi(a)x: a\in A, x\in X\},$$
which means that $X$ is also essential as a left $A$-module.

Now, let $A$ be a $C^*$-algebra and $S$ a unital (countable) discrete semigroup.
We recall from \cite{Fowler} that the disjoint union $X=\bigsqcup_{s\in S} X_{s}$ of Hilbert bimodules $X_{s}$ over $A$ is called a
\emph{discrete product system} over $S$ if there is a multiplication
\begin{align}
\label{eq31}
(x,y)\in X_{s}\times X_{t}\mapsto xy\in X_{st}
\end{align}
on $X$, with which, $X$ is a semigroup, and the map (\ref{eq31}) extends to an isomorphism of the Hilbert bimodules
$X_{s}\otimes_{A} X_{t}$ and $X_{xt}$ for all $s,t\in S$ with $s,t\neq e$. The bimodule $X_{e}$ is $_{A}A_{A}$, and the
multiplications $X_{e}\times X_{s}\mapsto X_{s}$ and $X_{s}\times X_{e}\mapsto X_{s}$ are just given by the module actions of
$A$ on $X_{s}$. Note that also we write $\phi_{s}:A\rightarrow \L(X_{s})$ for the homomorphism which defines
the left action of $A$ on $X_{s}$.

Note that, for every $s,t\in S$ with $s\neq e$, there is a homomorphism $\iota_{s}^{st}:\L(X_{s})\rightarrow \L(X_{st})$
characterized by
$$\iota_{s}^{st}(T)(xy)=(Tx)y$$
for all $x\in X_{s}$, $y\in X_{t}$ and $T\in \L(X_{s})$. In fact, $\iota_{s}^{st}(T)=T\otimes \id_{X_{t}}$.

A \emph{Toeplitz representation} of the product system $X$ in a $C^*$-algebra $B$ is a map $\psi:X\rightarrow B$ such that
\begin{itemize}
\item[(1)] $\psi_{s}(x)\psi_{t}(y)=\psi_{st}(xy)$ for all $s,t\in S$, $x\in X_{s}$, and $y\in X_{t}$; and
\item[(2)] the pair $(\psi_{s},\psi_{e})$ is a Toeplitz representation of $X_{s}$ in $B$ for all $s\in S$,
\end{itemize}
where $\psi_{s}$ denotes the restriction of $\psi$ to $X_{s}$. For every $s\in S$, let $\psi^{(s)}:\K(X_{s})\rightarrow B$ be the
Pimsner homomorphism corresponding to the pair $(\psi_{s},\psi_{e})$ defined by
$$\psi^{(s)}(\Theta_{x,y})=\psi_{s}(x)\psi_{s}(y)^{*}$$
for all $x,y\in X_{s}$ (see (\ref{pimsner-hom})).

By \cite[Proposition 2.8]{Fowler}, for every product system $X$ over $S$, there is a $C^*$-algebra $\T(X)$, called the
\emph{Toeplitz algebra} of $X$, which is generated by a universal Toeplitz representation $i_{X}:X\rightarrow \T(X)$ of $X$.
The pair $(\T(X),i_{X})$ is unique up to isomorphism, and $i_{X}$ is isometric.

Next, we recall that for any quasi-lattice ordered group $(G,S)$, the notions of compactly aligned product system over $S$
and Nica covariant Toeplitz representation of it were introduced first by Fowler in \cite{Fowler}. Then, authors in \cite{NB} extended
these notions to product systems over right LCM semigroups. Suppose that $S$ is a unital right LCM semigroup. A product
system $X$ over $S$ of Hilbert bimodules is called \emph{compactly aligned} if for all $r,t\in S$ such that
$rS\cap tS=sS$ for some $s\in S$ we have
$$\iota_{r}^{s}(R)\iota_{t}^{s}(T)\in \K(X_{s})$$
for all $R\in \K(X_{r})$ and $T\in \K(X_{t})$. Let $X$ be a compactly aligned product system over a right LCM semigroup $S$,
and $\psi:X\rightarrow B$ a Toeplitz representation of $X$ in a $C^*$-algebra $B$. Then,
$\psi$ is called \emph{Nica covariant} if
\begin{align}
\psi^{(r)}(R)\psi^{(t)}(T)=
   \begin{cases}
      \psi^{(s)}\big( \iota_{r}^{s}(R)\iota_{t}^{s}(T) \big) &\textrm{if}\empty\ \text{$rS\cap tS=sS$,}\\
      0 &\textrm{if}\empty\ \text{$rS\cap tS=\emptyset$}\\
   \end{cases}
\end{align}
for all $r,t\in S$, $R\in \K(X_{r})$ and $T\in \K(X_{t})$.

For a compactly aligned product system $X$ over a right LCM semigroup $S$, the \emph{Nica-Toeplitz algebra} $\mathcal{N}\T(X)$
is the $C^*$-algebra generated by a Nica covariant Toeplitz representation $i_{X}: X\rightarrow \mathcal{N}\T(X)$ which is
universal for Nica covariant Toeplitz representations of $X$, which means that,
for every Nica covariant Toeplitz representation of $\psi$ of $X$, there is a representation $\psi_{*}$ of $\mathcal{N}\T(X)$ such that
$\psi_{*}\circ i_{X}=\psi$ (see \cite{Fowler,NB}).

\section{Nica partial-isometric representations}
\label{sec:Nica-piso}

Let $P$ be a left LCM semigroup. A \emph{partial-isometric representation} of $P$ on a Hilbert space $H$ is a
map $V:P\rightarrow B(H)$ such that each $V_{x}:=V(x)$ is a partial isometry, and the map $V$ is a unital semigroup homomorphism
of $P$ into the multiplicative semigroup $B(H)$. Moreover, if the representation $V$ satisfies the equation
\begin{align}\label{Nica-1}
V_{x}^{*}V_{x}V_{y}^{*}V_{y}=
   \begin{cases}
      V_{z}^{*}V_{z} &\textrm{if}\empty\ \text{$Px\cap Py=Pz$,}\\
      0 &\textrm{if}\empty\ \text{$Px\cap Py=\emptyset$,}\\
   \end{cases}
\end{align}
then it is called a \emph{Nica partial-isometric representation} of $P$ on $H$.
The equation (\ref{Nica-1}) is called the \emph{Nica covariance condition}. Of course, since the least common left multiple $z$
may not be unique, we must check that whether the Nica covariance condition is well-defined. So, assume that
$Pz=Px\cap Py=P\tilde{z}$. If follows that
$$V_{x}^{*}V_{x}V_{y}^{*}V_{y}=V_{z}^{*}V_{z}$$
and
$$V_{x}^{*}V_{x}V_{y}^{*}V_{y}=V_{\tilde{z}}^{*}V_{\tilde{z}}.$$
Now, since $\tilde{z}=uz$ for some invertible element $u$ of $P$, we have
$$V_{\tilde{z}}^{*}V_{\tilde{z}}=V_{uz}^{*}V_{uz}=(V_{u}V_{z})^{*}V_{u}V_{z}=V_{z}^{*}V_{u}^{*}V_{u}V_{z}.$$
But it is not difficult to see that $V_{u}$ is actually a unitary, and therefore,
$$V_{\tilde{z}}^{*}V_{\tilde{z}}=V_{z}^{*}V_{z}.$$ This implies that the equation (\ref{Nica-1}) is indeed well-defined.

The following example shows that given a left LCM semigroup $P$, a Nica partial-isometric representation of $P$ exists.

\begin{example}
\label{S-W-exmp}
Suppose that $P$ is a left LCM semigroup and $H$ a Hilbert space.
Define a map $S:P\rightarrow B(\ell^{2}(P)\otimes H)$ by
\[
(S_{y}f)(x)=
   \begin{cases}
      f(r) &\textrm{if}\empty\ \text{$x=ry$ for some $r\in P$,}\\
      0 &\textrm{otherwise.}\\
   \end{cases}
\]
for every $f\in \ell^{2}(P)\otimes H$. Note that $x=ry$ for some $r\in P$ is equivalent to saying that $x\in Py$. Moreover,
if $sy=x=ry$ for some $r,s\in P$, then $s=r$ by the right cancellativity of $P$, and hence $f(r)=f(s)$. This implies that
each $S_{y}$ is well-defined. One can see that each $S_{y}$ is a linear operator. We claim that each $S_{y}$ is actually
an isometry, and in particular, $S_{e}=1$. We have
$$\|S_{y}f\|^{2}=\displaystyle\sum_{x\in P} \|(S_{y}f)(x)\|^{2}=\displaystyle\sum_{r\in P} \|(S_{y}f)(ry)\|^{2}=
\displaystyle\sum_{r\in P} \|f(r)\|^{2}=\|f\|^{2},$$
which implies that each $S_{y}$ is an isometry. In particular,
$$(S_{e}f)(x)=(S_{e}f)(xe)=f(x),$$ which shows that $S_{e}=1$. In addition, a simple calculation shows that
\begin{align}
\label{SxSy}
S_{x}S_{y}=S_{yx}=S_{x\bullet y}\ \ \textrm{for all}\ x,y\in P.
\end{align}
Next, we want to show that the adjoint of each $S_{y}$ is given by
$$(W_{y}f)(x)=f(xy)$$ for all $f\in \ell^{2}(P)\otimes H$. For every $f,g\in \ell^{2}(P)\otimes H$, we have
\begin{eqnarray*}
\begin{array}{rcl}
\langle S_{y}f|g \rangle&=&\displaystyle\sum_{x\in P}\langle (S_{y}f)(x)|g(x) \rangle\\
&=&\displaystyle\sum_{r\in P}\langle (S_{y}f)(ry)|g(ry) \rangle\\
&=&\displaystyle\sum_{r\in P}\langle f(r)|g(ry) \rangle=\displaystyle\sum_{r\in P}\langle f(r)|(W_{y}g)(r) \rangle
=\langle f|W_{y}g \rangle.
\end{array}
\end{eqnarray*}
So, $S_{y}^{*}=W_{y}$ for every $y\in P$. Also, for every $x,y\in P$, by applying (\ref{SxSy}), we get
$$W_{x}W_{y}=S_{x}^{*}S_{y}^{*}=[S_{y}S_{x}]^{*}=S_{xy}^{*}=W_{xy}.$$
Therefore, since each $W_{x}$ is obviously a partial-isometry, it follows that the map
$W:P\rightarrow B(\ell^{2}(P)\otimes H)$ defined by $(W_{y}f)(x)=f(xy)$ is a partial-isometric representation of
$P$ on $\ell^{2}(P)\otimes H$. We claim that the representation $W$ satisfies the Nica covariance condition (\ref{Nica-1}).
Firstly,
\begin{align}
\label{eq2}
W_{x}^{*}W_{x}W_{y}^{*}W_{y}=S_{x}S_{x}^{*}S_{y}S_{y}^{*}.
\end{align}
Then, for every $f\in \ell^{2}(P)\otimes H$, we have
\begin{align}
\label{eq3}
(S_{x}^{*}S_{y}f)(r)=(S_{x}^{*}(S_{y}f))(r)=(S_{y}f)(rx)\ \ \textrm{for all}\ r\in P.
\end{align}
Now, if $Px\cap Py=\emptyset$, since $rx\in Px$, it follows that $rx\not\in Py$, and therefore,
$$(S_{y}f)(rx)=0.$$
Thus, $(S_{x}^{*}S_{y}f)(r)=0$, which implies that the equation (\ref{eq2}) must be equal to zero when $Px\cap Py=\emptyset$.
Suppose the otherwise, namely, $Px\cap Py=Pz$. Note that first,
if $\{\varepsilon_{s}: s\in P\}$ is the usual orthonormal basis of $\ell^{2}(P)$, then each $S_{y}S_{y}^{*}$ is a projection
onto the closed subspace $\ell^{2}(Py)\otimes H$ of $\ell^{2}(P)\otimes H$ spanned by the elements
$$\{\varepsilon_{sy}\otimes h: s\in P, h\in H\},$$ which is indeed equal to the $\ker (1-S_{y}S_{y}^{*})$. So, for every
$f\in \ell^{2}(P)\otimes H$,
\[
\big(S_{x}S_{x}^{*}(S_{y}S_{y}^{*}f)\big)(r)=
   \begin{cases}
      (S_{y}S_{y}^{*}f)(r) &\textrm{if}\empty\ \text{$r\in Px$,}\\
      0 &\textrm{otherwise.}\\
   \end{cases}
\]
Moreover, for $(S_{y}S_{y}^{*}f)(r)$, where $r\in Px$, we have
\[
(S_{y}S_{y}^{*}f)(r)=
   \begin{cases}
      f(r) &\textrm{if}\empty\ \text{$r\in Py$,}\\
      0 &\textrm{otherwise.}\\
   \end{cases}
\]
It thus follows that
\[
\big(S_{x}S_{x}^{*}(S_{y}S_{y}^{*}f)\big)(r)=
   \begin{cases}
      f(r) &\textrm{if}\empty\ \text{$r\in (Px\cap Py)=Pz$,}\\
      0 &\textrm{otherwise,}\\
   \end{cases}
\]
which equals $(S_{z}S_{z}^{*}f)(r)$. Therefore, we have
$$S_{x}S_{x}^{*}S_{y}S_{y}^{*}=S_{z}S_{z}^{*},$$ from which, for the equation (\ref{eq2}), we get
$$W_{x}^{*}W_{x}W_{y}^{*}W_{y}=S_{z}S_{z}^{*}=W_{z}^{*}W_{z}.$$
Consequently, $W$ is indeed a Nica partial-isometric representation.

\end{example}

\begin{remark}
\label{Nica-piso-TOG}
If $P$ is the positive cone of a totally ordered group $G$, then every partial-isometric representation $V$ of $P$ automatically satisfies the Nica covariance condition (\ref{Nica-1}), such that
$$V_{x}^{*}V_{x}V_{y}^{*}V_{y}=V_{\max\{x,y\}}^{*}V_{\max\{x,y\}}\ \ \ \textrm{for all}\ x,y\in P.$$
\end{remark}

\begin{lemma}
\label{Nica-piso-FG}
Consider the right quasi-lattice ordered group $(\F_{n},\F_{n}^{+})$ (see Example \ref{free-G}). A partial-isometric representation $V$ of $\F_{n}^{+}$ satisfies the Nica covariance condition (\ref{Nica-1}) if and only if the initial projections $V_{a_{i}}^{*}V_{a_{i}}$ and $V_{a_{j}}^{*}V_{a_{j}}$ have orthogonal ranges, where
$1\leq i,j\leq n$ such that $i\neq j$.
\end{lemma}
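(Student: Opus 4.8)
The plan is to write $Q_{i}:=V_{a_{i}}^{*}V_{a_{i}}$ and $P_{i}:=V_{a_{i}}V_{a_{i}}^{*}$ for the source and range projections of $V_{a_{i}}$, and to begin by recording the combinatorics of $\F_{n}^{+}$ that I will need. Since $w\in \F_{n}^{+}x$ means exactly that $x$ is a final string of $w$ (Example \ref{free-G}), any element of $\F_{n}^{+}x\cap \F_{n}^{+}y$ has both $x$ and $y$ as final strings; but two final strings of one word are comparable for $\leq_{\rt}$ (the shorter is a final string of the longer), so $\F_{n}^{+}x\cap \F_{n}^{+}y=\emptyset$ unless one of $x,y$ is a final string of the other, in which case $\F_{n}^{+}x\cap \F_{n}^{+}y=\F_{n}^{+}z$ with $z$ the longer one (which is then $x\vee_{\rt}y$). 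Granting this, the forward implication is immediate: if $V$ is Nica covariant, choose $x=a_{i}$, $y=a_{j}$ with $i\neq j$; neither generator is a final string of the other, so $\F_{n}^{+}a_{i}\cap \F_{n}^{+}a_{j}=\emptyset$, and (\ref{Nica-1}) forces $Q_{i}Q_{j}=0$, i.e.\ the initial projections have orthogonal ranges.

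For the converse I would assume $Q_{i}Q_{j}=0$ for all $i\neq j$ and verify (\ref{Nica-1}) for arbitrary $x,y\in \F_{n}^{+}$. If one of them is a final string of the other, say $x=wy$, then $\F_{n}^{+}x\cap \F_{n}^{+}y=\F_{n}^{+}x$ and it suffices to check $V_{x}^{*}V_{x}\leq V_{y}^{*}V_{y}$; this holds because $V_{x}^{*}V_{x}=V_{y}^{*}(V_{w}^{*}V_{w})V_{y}\leq V_{y}^{*}V_{y}$ since $V_{w}^{*}V_{w}\leq 1$, and for projections this inequality gives $(V_{x}^{*}V_{x})(V_{y}^{*}V_{y})=V_{x}^{*}V_{x}$, which is exactly (\ref{Nica-1}) in this case; the symmetric case is the same. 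The substantive case is $x,y$ incomparable, where both words are nonempty and I must show $V_{x}^{*}V_{x}V_{y}^{*}V_{y}=0$. I would prove this by induction on $|x|+|y|$, the base case $|x|+|y|=2$ being precisely $Q_{i}Q_{j}=0$.

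For the inductive step I compare the last letters of $x$ and $y$. If they differ, write $x=x'a_{i}$, $y=y'a_{j}$ with $i\neq j$, and set $B=V_{a_{i}}^{*}V_{x'}^{*}V_{x'}V_{a_{i}}$, $C=V_{a_{j}}^{*}V_{y'}^{*}V_{y'}V_{a_{j}}$; from $V_{x'}^{*}V_{x'}\leq 1$ and $V_{y'}^{*}V_{y'}\leq 1$ one gets $0\leq B\leq Q_{i}$ and $0\leq C\leq Q_{j}$, hence $B=Q_{i}BQ_{i}$ and $C=Q_{j}CQ_{j}$, so $V_{x}^{*}V_{x}V_{y}^{*}V_{y}=BC=Q_{i}BQ_{i}Q_{j}CQ_{j}=0$. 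If the last letters agree, write $x=x'a_{k}$, $y=y'a_{k}$; then $x'$ and $y'$ are still incomparable (appending a common letter preserves comparability of final strings), so the induction hypothesis gives $(V_{x'}^{*}V_{x'})(V_{y'}^{*}V_{y'})=0$, and expanding yields
$$V_{x}^{*}V_{x}V_{y}^{*}V_{y}=V_{a_{k}}^{*}(V_{x'}^{*}V_{x'})\,P_{k}\,(V_{y'}^{*}V_{y'})\,V_{a_{k}}.$$
The one extra ingredient is that $V_{x'}^{*}V_{x'}$ commutes with $P_{k}$, which follows from the elementary fact that a product $ST$ of partial isometries is again a partial isometry iff $S^{*}S$ commutes with $TT^{*}$, applied to $S=V_{x'}$, $T=V_{a_{k}}$ (the product $V_{x'}V_{a_{k}}=V_{x'a_{k}}$ being a partial isometry since $V$ is a partial-isometric representation). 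Then $(V_{x'}^{*}V_{x'})P_{k}(V_{y'}^{*}V_{y'})=P_{k}(V_{x'}^{*}V_{x'})(V_{y'}^{*}V_{y'})=0$, closing the induction and hence establishing (\ref{Nica-1}) in all cases.

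I expect the main obstacle to be exactly the last subcase, where $x$ and $y$ share a last letter: there the compression trick used when the last letters differ breaks down because of the ``interior'' projection $P_{k}$, and one genuinely needs the commutation of $V_{x'}^{*}V_{x'}$ with $P_{k}$. That commutation reduces to the ``only if'' half of the partial-isometry product criterion, which can be proved directly by noting that if $T^{*}S^{*}ST$ is a projection then so is $(TT^{*})(S^{*}S)(TT^{*})$, and then a short Hilbert-space argument shows that a projection $Q$ and a positive contraction $P$ with $QPQ$ a projection must in fact commute.
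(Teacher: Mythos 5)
Your proposal is correct and follows essentially the same route as the paper: the forward direction is the same one-line observation, the comparable case is the same order computation ($V_{x}^{*}V_{x}\leq V_{y}^{*}V_{y}$ when $y$ is a final string of $x$), and the incomparable case is reduced to $V_{a_{i}}^{*}V_{a_{i}}V_{a_{j}}^{*}V_{a_{j}}=0$ by isolating the first letter from the right where $x$ and $y$ differ and invoking the Halmos--Wallen criterion to commute an initial projection past the range projection of the common suffix. The only difference is presentational: you peel off the common final string one letter at a time by induction on $|x|+|y|$, whereas the paper writes $x=sa_{i}z$, $y=ta_{j}z$ and removes the whole common suffix $z$ in a single computation.
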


\begin{proof}
Suppose that $V$ is a partial-isometric representation of $\F_{n}^{+}$ on a Hilbert space $H$. If it satisfies the Nica covariance condition (\ref{Nica-1}), then one can see that, for every $1\leq i,j\leq n$ with $i\neq j$, we have
$$V_{a_{i}}^{*}V_{a_{i}}V_{a_{j}}^{*}V_{a_{j}}=0$$
as $\F_{n}^{+}a_{i} \cap \F_{n}^{+}a_{j}=\emptyset$. So, it follows that the initial projections
$V_{a_{i}}^{*}V_{a_{i}}$ and $V_{a_{j}}^{*}V_{a_{j}}$ have orthogonal ranges for every $1\leq i,j\leq n$ with $i\neq j$.

Conversely, suppose that for every $1\leq i,j\leq n$ with $i\neq j$, the initial projections
$V_{a_{i}}^{*}V_{a_{i}}$ and $V_{a_{j}}^{*}V_{a_{j}}$ have orthogonal ranges. Therefore, we have
\begin{align}
\label{eq33}
V_{a_{i}}^{*}V_{a_{i}}V_{a_{j}}^{*}V_{a_{j}}=0
\end{align}
for every $1\leq i,j\leq n$ with $i\neq j$. Now, for every $x,y\in \F_{n}^{+}$, if $\F_{n}^{+}x \cap \F_{n}^{+}y\neq\emptyset$,
then $x$ is a final string on the right of $y$ or $y$ is a final string on the right of $x$. Suppose that $x$ is a
final string on the right of $y$, from which, it follows that $\F_{n}^{+}x \cap \F_{n}^{+}y=\F_{n}^{+}y$, and $y=(yx^{-1})x$,
where $yx^{-1}\in \F_{n}^{+}$. Therefore, we have
\begin{eqnarray*}
\begin{array}{rcl}
V_{x}^{*}V_{x}V_{y}^{*}V_{y}&=&V_{x}^{*}V_{x}(V_{yx^{-1}x})^{*}V_{y}\\
&=&V_{x}^{*}V_{x}(V_{yx^{-1}}V_{x})^{*}V_{y}\\
&=&V_{x}^{*}V_{x}V_{x}^{*}V_{yx^{-1}}^{*}V_{y}\\
&=&V_{x}^{*}V_{yx^{-1}}^{*}V_{y}\\
&=&(V_{yx^{-1}}V_{x})^{*}V_{y}=(V_{yx^{-1}x})^{*}V_{y}=V_{y}^{*}V_{y}.
\end{array}
\end{eqnarray*}
If $y$ is the final string on the right of $x$, a similar computation shows that
$V_{x}^{*}V_{x}V_{y}^{*}V_{y}=V_{x}^{*}V_{x}$ as $\F_{n}^{+}x \cap \F_{n}^{+}y=\F_{n}^{+}x$.
If $\F_{n}^{+}x \cap \F_{n}^{+}y=\emptyset$, then $x\neq y$. Note that we can consider $x$ and $y$ as two strings of letters $a_i$'s
with equal lengths by adding, for example, a finite number of $(a_{1})^{0}$ to the left of the shorter string. Therefore, since $x\neq y$,
we can write
$$x=s a_{i} z\ \ \textrm{and}\ \ y=t a_{j} z,$$
where $s,t,z\in \F_{n}^{+}$, and $i\neq j$. It follows that
\begin{eqnarray*}
\begin{array}{rcl}
V_{x}^{*}V_{x}V_{y}^{*}V_{y}&=&V_{x}^{*}V_{s a_{i} z}(V_{t a_{j} z})^{*}V_{y}\\
&=&V_{x}^{*} V_{s} V_{a_{i}} V_{z} (V_{t} V_{a_{j}} V_{z})^{*} V_{y}\\
&=&V_{x}^{*} V_{s} (V_{a_{i}}) V_{z}V_{z}^{*} (V_{a_{j}}^{*}) V_{t}^{*}  V_{y}\\
&=&V_{x}^{*} V_{s} (V_{a_{i}}V_{a_{i}}^{*}V_{a_{i}}) V_{z}V_{z}^{*} (V_{a_{j}}^{*}V_{a_{j}}V_{a_{j}}^{*}) V_{t}^{*}  V_{y}\\
&=&V_{x}^{*} V_{s} V_{a_{i}} (V_{a_{i}}^{*}V_{a_{i}}) (V_{z}V_{z}^{*}) (V_{a_{j}}^{*}V_{a_{j}}) V_{a_{j}}^{*} V_{t}^{*}  V_{y}.
\end{array}
\end{eqnarray*}
Now, in the bottom line, since the product $V_{a_{i}} V_{z}$ of the partial-isometries $V_{a_{i}}$ and $V_{z}$ is a partial-isometry, namely,
$V_{a_{i}z}$, by \cite[Lemma 2]{Halmos}, $V_{a_{i}}^{*}V_{a_{i}}$ commutes with $V_{z}V_{z}^{*}$. So, we get
\begin{eqnarray*}
\begin{array}{rcl}
V_{x}^{*}V_{x}V_{y}^{*}V_{y}&=&
V_{x}^{*} V_{s} V_{a_{i}} (V_{z}V_{z}^{*}) (V_{a_{i}}^{*}V_{a_{i}} V_{a_{j}}^{*}V_{a_{j}}) V_{a_{j}}^{*} V_{t}^{*}  V_{y}\\
&=&V_{x}^{*} V_{s} V_{a_{i}} (V_{z}V_{z}^{*}) (0) V_{a_{j}}^{*} V_{t}^{*}  V_{y}=0\ \ \ \ \ (by\ (\ref{eq33})).
\end{array}
\end{eqnarray*}
Thus, the representation $V$ satisfies the Nica covariance condition (\ref{Nica-1}).
\end{proof}

\section{Partial-isometric crossed products}
\label{sec:Piso-CP}

\subsection{Covariant partial-isometric representations}
\label{subsec:cov-piso-rep}

Let $P$ be a left LCM semigroup, and $(A,P,\alpha)$ a dynamical system consisting
of a $C^*$-algebra $A$, and an action $\alpha:P\rightarrow \End (A)$ of $P$ by extendible endomorphisms of $A$ such that
$\alpha_{e}=\id_{A}$.

\begin{definition}
\label{cov.pair}
A \emph{covariant partial-isometric representation} of $(A,P,\alpha)$ on a Hilbert space $H$ is a pair $(\pi,V)$ consisting of
a nondegenerate representation $\pi:A\rightarrow B(H)$ and a Nica partial-isometric representation $V:P\rightarrow B(H)$ of $P$ such that
\begin{align}
\label{cov1}
\pi(\alpha_{x}(a))=V_{x}\pi(a) V_{x}^{*}\ \ \textrm{and}\ \ V_{x}^{*}V_{x} \pi(a)=\pi(a) V_{x}^{*}V_{x}
\end{align}
for all $a\in A$ and $x\in P$.
\end{definition}

\begin{lemma}
\label{cov2-lemma}
Every covariant partial-isometric pair $(\pi,V)$ extends to a covariant partial-isometric representation $(\overline{\pi},V)$ of
the system $(M(A),P,\overline{\alpha})$, and (\ref{cov1}) is equivalent to
\begin{align}
\label{cov2}
\pi(\alpha_{x}(a))V_{x}=V_{x}\pi(a)\ \ \textrm{and}\ \ V_{x}V_{x}^{*}=\overline{\pi}(\overline{\alpha}_{x}(1))
\end{align}
for all $a\in A$ and $x\in P$.
\end{lemma}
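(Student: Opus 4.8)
The plan is to first extend $\pi$ to the multiplier algebra, and then derive the equivalence by elementary identities for partial isometries supplemented by a single approximate-identity limit. Since $\pi$ is nondegenerate it extends uniquely to a unital representation $\overline{\pi}\colon M(A)\to B(H)$, and $\overline{\pi}$ carries bounded strictly convergent nets in $M(A)$ to strongly convergent nets in $B(H)$; together with the strict continuity of each $\overline{\alpha}_{x}$ and the fact recalled in Section~\ref{intro} that $\overline{\alpha}_{x}(1)$ is a projection in $M(A)$, this is the only analytic input. The operator family $V$ is left unchanged, hence still Nica partial-isometric, so it remains to verify (i) that (\ref{cov1}) and (\ref{cov2}) are equivalent as families of identities over $A$, and (ii) that they force the corresponding identities over all of $M(A)$, which is precisely the statement that $(\overline{\pi},V)$ is a covariant partial-isometric representation of $(M(A),P,\overline{\alpha})$. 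Throughout one uses $V_{x}V_{x}^{*}V_{x}=V_{x}$ and $V_{x}^{*}V_{x}V_{x}^{*}=V_{x}^{*}$.

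To prove (\ref{cov1}) $\Rightarrow$ (\ref{cov2}), I would right-multiply $\pi(\alpha_{x}(a))=V_{x}\pi(a)V_{x}^{*}$ by $V_{x}$ and move $\pi(a)$ past the commuting projection $V_{x}^{*}V_{x}$, obtaining
\[
\pi(\alpha_{x}(a))V_{x}=V_{x}\pi(a)V_{x}^{*}V_{x}=V_{x}V_{x}^{*}V_{x}\pi(a)=V_{x}\pi(a),
\]
the first relation of (\ref{cov2}). For the second, fix an approximate identity $\{a_{\lambda}\}$ of $A$; then $\alpha_{x}(a_{\lambda})=\overline{\alpha}_{x}(a_{\lambda})\to\overline{\alpha}_{x}(1)$ strictly, so $\pi(\alpha_{x}(a_{\lambda}))\to\overline{\pi}(\overline{\alpha}_{x}(1))$ strongly, while $\pi(\alpha_{x}(a_{\lambda}))=V_{x}\pi(a_{\lambda})V_{x}^{*}\to V_{x}V_{x}^{*}$ strongly by nondegeneracy of $\pi$; uniqueness of strong limits gives $V_{x}V_{x}^{*}=\overline{\pi}(\overline{\alpha}_{x}(1))$.

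For the converse, I would first take adjoints in $\pi(\alpha_{x}(a))V_{x}=V_{x}\pi(a)$ and replace $a$ by $a^{*}$ to get $V_{x}^{*}\pi(\alpha_{x}(a))=\pi(a)V_{x}^{*}$ for all $a\in A$. Since $\overline{\alpha}_{x}(1)\,\alpha_{x}(a)=\overline{\alpha}_{x}(1\cdot a)=\alpha_{x}(a)$ in $M(A)$, applying $\overline{\pi}$ and using $V_{x}V_{x}^{*}=\overline{\pi}(\overline{\alpha}_{x}(1))$ shows $V_{x}V_{x}^{*}\pi(\alpha_{x}(a))=\pi(\alpha_{x}(a))$, and hence also $\pi(\alpha_{x}(a))V_{x}V_{x}^{*}=\pi(\alpha_{x}(a))$ (apply $\ast$ and replace $a$ by $a^{*}$); therefore $\pi(\alpha_{x}(a))=\pi(\alpha_{x}(a))V_{x}V_{x}^{*}=V_{x}\pi(a)V_{x}^{*}$, and then
\[
V_{x}^{*}V_{x}\pi(a)=V_{x}^{*}\big(V_{x}\pi(a)\big)=V_{x}^{*}\pi(\alpha_{x}(a))V_{x}=\pi(a)V_{x}^{*}V_{x},
\]
which is (\ref{cov1}). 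Finally, for the extension to $M(A)$, given $m\in M(A)$ the net $\{a_{\lambda}m\}\subseteq A$ is bounded and converges strictly to $m$, so $\alpha_{x}(a_{\lambda}m)\to\overline{\alpha}_{x}(m)$ strictly; taking strong limits in the identities $\pi(\alpha_{x}(a_{\lambda}m))=V_{x}\pi(a_{\lambda}m)V_{x}^{*}$ and $V_{x}^{*}V_{x}\pi(a_{\lambda}m)=\pi(a_{\lambda}m)V_{x}^{*}V_{x}$ (valid by the already-proved (\ref{cov1})) yields $\overline{\pi}(\overline{\alpha}_{x}(m))=V_{x}\overline{\pi}(m)V_{x}^{*}$ and $V_{x}^{*}V_{x}\overline{\pi}(m)=\overline{\pi}(m)V_{x}^{*}V_{x}$, so $(\overline{\pi},V)$ satisfies (\ref{cov1}) over $M(A)$.

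The computations are purely formal, so the one point I expect to require care is the bookkeeping of topologies in the two limiting arguments: one must keep the relevant nets bounded so that strict convergence in $M(A)$ is carried by $\overline{\pi}$ to strong convergence in $B(H)$, and one uses that multiplication by the fixed operators $V_{x}$, $V_{x}^{*}$ on either side is strongly continuous. I do not expect any genuine obstacle beyond this.
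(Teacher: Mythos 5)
Your argument is correct and is exactly the standard one that the paper defers to (the first part of \S4 of \cite{LR}): the forward direction via $V_{x}V_{x}^{*}V_{x}=V_{x}$ together with the strict limit $\alpha_{x}(a_{\lambda})\to\overline{\alpha}_{x}(1)$, the converse via the adjointed intertwining relation and $V_{x}V_{x}^{*}\pi(\alpha_{x}(a))=\pi(\alpha_{x}(a))$, and the passage to $M(A)$ by bounded strict limits. The bookkeeping of bounded nets and strict-to-strong continuity of $\overline{\pi}$ is handled correctly, so there is nothing to add.
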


\begin{proof}
We skip the proof as it follows by similar discussions to the first part of \cite[\S4]{LR}.
\end{proof}

The following example shows that every dynamical system $(A,P,\alpha)$ admits a nontrivial (nonzero)
covariant partial-isometric representation.

\begin{example}
\label{exmp-piso-rep}
Suppose that $(A,P,\alpha)$ is a dynamical system, and $\pi_{0}:A\rightarrow B(H)$ a nondegenerate representation of $A$ on
a Hilbert space $H$. Define a map $\pi:A\rightarrow B(\ell^{2}(P)\otimes H)$ by
$$(\pi(a)f)(x)=\pi_{0}(\alpha_{x}(a))f(x)$$
for all $a\in A$ and $f\in \ell^{2}(P)\otimes H\simeq \ell^{2}(P, H)$. One can see that $\pi$ is a representation of $A$ on the Hilbert space
$\ell^{2}(P)\otimes H$. Let $q:\ell^{2}(P)\otimes H\rightarrow \ell^{2}(P)\otimes H$ be a map defined by
$$(qf)(x)=\overline{\pi_{0}}(\overline{\alpha}_{x}(1))f(x)$$
for all $f\in \ell^{2}(P)\otimes H$. It is not difficult to see that $q\in B(\ell^{2}(P)\otimes H)$, which is actually a projection
onto a closed subspace $\H$ of $\ell^{2}(P)\otimes H$. We claim that if $\{a_{i}\}$ is any approximate unit in $A$, then
$\pi(a_{i})$ converges strictly to $q$ in $\M\big(\K(\ell^{2}(P)\otimes H)\big)=B(\ell^{2}(P)\otimes H)$. To prove our claim, since
the net $\{\pi(a_{i})\}$ is a norm bounded subset of $B(\ell^{2}(P)\otimes H)$, and $\pi(a_{i})^{*}=\pi(a_{i})$ for each $i$
as well as $q^{*}=q$, by \cite[Proposition C.7]{RW}, we only need to show that $\pi(a_{i})\rightarrow q$ strongly
in $B(\ell^{2}(P)\otimes H)$. If $\{\varepsilon_{x}: x\in P\}$ is the usual orthonormal basis of $\ell^{2}(P)$, then it is enough
to see that
$$\pi(a_{i})(\varepsilon_{x}\otimes \pi_{0}(a)h)\rightarrow q(\varepsilon_{x}\otimes \pi_{0}(a)h)$$
for each spanning element $(\varepsilon_{x}\otimes \pi_{0}(a)h)$ of $\ell^{2}(P)\otimes H$ (recall that $\pi_{0}$ is nondegenerate).
We have
$$\pi(a_{i})(\varepsilon_{x}\otimes \pi_{0}(a)h)=\varepsilon_{x}\otimes \pi_{0}(\alpha_{x}(a_{i}))\pi_{0}(a)h
=\varepsilon_{x}\otimes \pi_{0}(\alpha_{x}(a_{i})a)h,$$
which is convergent to
$$\varepsilon_{x}\otimes \pi_{0}(\overline{\alpha}_{x}(1)a)h=\varepsilon_{x}\otimes \overline{\pi_{0}}(\overline{\alpha}_{x}(1))\pi_{0}(a)h
=q(\varepsilon_{x}\otimes \pi_{0}(a)h)$$
in $\ell^{2}(P)\otimes H$. This is due to the extendibility of each $\alpha_{x}$. Therefore, $\pi(a_{i})\rightarrow q$ strictly in
$B(\ell^{2}(P)\otimes H)$.

Next, let $W:P\rightarrow B(\ell^{2}(P)\otimes H)$ be the Nica partial-isometric representation introduced in Example \ref{S-W-exmp}.
We aim at constructing a covariant partial-isometric representation $(\rho,V)$ of $(A,P,\alpha)$ on the Hilbert space (closed subspace) $\H$
by using the pair $(\pi,W)$. Note that, in general, $\pi$ is not nondegenerate on $\ell^{2}(P)\otimes H$, unless
$\overline{\alpha}_{x}(1)=1$ for every $x\in P$. So, for our purpose, we first show that
\begin{align}
\label{eq4}
W_{x}\pi(a)=\pi(\alpha_{x}(a))W_{x}\ \ \textrm{and}\ \ W_{x}^{*}W_{x}\pi(a)=\pi(a)W_{x}^{*}W_{x}
\end{align}
for all $a\in A$ and $x\in P$. For every $f\in \ell^{2}(P)\otimes H$, we have
\begin{eqnarray*}
\begin{array}{rcl}
(W_{x}\pi(a)f)(r)&=&(W_{x}(\pi(a)f))(r)\\
&=&(\pi(a)f)(rx)\\
&=&\pi_{0}(\alpha_{rx}(a))f(rx)\\
&=&\pi_{0}(\alpha_{r}(\alpha_{x}(a)))(W_{x}f)(r)\\
&=&(\pi(\alpha_{x}(a))W_{x}f)(r)\\
\end{array}
\end{eqnarray*}
for all $r\in P$. So, $W_{x}\pi(a)=\pi(\alpha_{x}(a))W_{x}$ is valid, from which, we get
$\pi(a)W_{x}^{*}=W_{x}^{*}\pi(\alpha_{x}(a))$. One can apply these two equations to see that
$W_{x}^{*}W_{x}\pi(a)=\pi(a)W_{x}^{*}W_{x}$ is also valid. Also, since $W_{x}W_{x}^{*}=S_{x}^{*}S_{x}=1$ (see Example \ref{S-W-exmp}),
each $W_{x}$ is a coisometry, and hence, by applying the equation $W_{x}\pi(a)=\pi(\alpha_{x}(a))W_{x}$, we have
\begin{align}
\label{eq5}
W_{x}\pi(a)W_{x}^{*}=\pi(\alpha_{x}(a))W_{x}W_{x}^{*}=\pi(\alpha_{x}(a)).
\end{align}
Now we claim that the pair $(\rho,V)=(q\pi q,qWq)$ is a covariant partial-isometric representation of
of $(A,P,\alpha)$ on $\H$. More precisely, consider the maps
$$\rho:A\rightarrow qB(\ell^{2}(P)\otimes H)q\simeq B(\H)$$
and
$$V:P\rightarrow qB(\ell^{2}(P)\otimes H)q\simeq B(\H)$$
defined by
$$\rho(a)=q\pi(a)q=\pi(a)\ \ \ \textrm{and}\ \ \ V_{x}=qW_{x}q$$
for all $a\in A$ and $x\in P$, respectively. Since for any approximate unit $\{a_{i}\}$ in $A$,
$\rho(a_{i})=\pi(a_{i})\rightarrow q$ strongly in $B(\H)$, where $q=1_{B(\H)}$, it follows that the
representation $\rho$ is nondegenerate. Moreover, by applying (\ref{eq5}), we have
$$V_{x}\rho(a)V_{x}^{*}=qW_{x}q\pi(a)qW_{x}^{*}q=qW_{x}\pi(a)W_{x}^{*}q=q\pi(\alpha_{x}(a))q=\rho(\alpha_{x}(a)).$$
Also, by applying the first equation of (\ref{eq4}) and $\pi(a)W_{x}^{*}=W_{x}^{*}\pi(\alpha_{x}(a))$ along with
the fact that $\rho(a)=q\pi(a)q=q\pi(a)=\pi(a)q=\pi(a)$, we get
\begin{eqnarray*}
\begin{array}{rcl}
V_{x}^{*}V_{x}\rho(a)&=&qW_{x}^{*}qW_{x}q\pi(a)q\\
&=&qW_{x}^{*}qW_{x}\pi(a)q\\
&=&qW_{x}^{*}q\pi(\alpha_{x}(a))W_{x}q\\
&=&qW_{x}^{*}\pi(\alpha_{x}(a))qW_{x}q\\
&=&q\pi(a)W_{x}^{*}qW_{x}q\\
&=&q\pi(a)qW_{x}^{*}qW_{x}q=\rho(a)V_{x}^{*}V_{x}.
\end{array}
\end{eqnarray*}
Thus, it is only left to show that the map $V$ is a Nica partial-isometric representation. To see that each $V_{x}$ is a
partial-isometry, note that, for any approximate unit $\{a_{i}\}$ in $A$,
$$qW_{x}\pi(a_{i})W_{x}^{*}qW_{x}q$$
converges strongly to
$$qW_{x}qW_{x}^{*}qW_{x}q=V_{x}V_{x}^{*}V_{x}$$ in $B(\ell^{2}(P)\otimes H)$. On the other hand,
by applying the covariance equations of the pair $(\pi,W)$, we have
\begin{eqnarray*}
\begin{array}{rcl}
q[W_{x}\pi(a_{i})W_{x}^{*}]qW_{x}q&=&q\pi(\alpha_{x}(a_{i}))qW_{x}q\\
&=&q\pi(\alpha_{x}(a_{i}))W_{x}q=qW_{x}\pi(a_{i})q,
\end{array}
\end{eqnarray*}
which converges strongly to $qW_{x}q=V_{x}$. So, we must have $V_{x}V_{x}^{*}V_{x}=V_{x}$, which means that each
$V_{x}$ is a partial-isometry. To see $V_{x}V_{y}=V_{xy}$ for every $x,y\in P$, we first need to compute $V_{x}f$ for
any $f\in \H$. So, knowing that $qf=f$, we have
\begin{eqnarray*}
\begin{array}{rcl}
[V_{x}f](r)=[qW_{x}f](r)&=&[q(W_{x}f)](r)\\
&=&\overline{\pi_{0}}(\overline{\alpha}_{r}(1))(W_{x}f)(r)\\
&=&\overline{\pi_{0}}(\overline{\alpha}_{r}(1))f(rx)\\
&=&\overline{\pi_{0}}(\overline{\alpha}_{r}(1))(qf)(rx)\\
&=&\overline{\pi_{0}}(\overline{\alpha}_{r}(1))\overline{\pi_{0}}(\overline{\alpha}_{rx}(1))f(rx)\\
&=&\overline{\pi_{0}}(\overline{\alpha}_{r}(1)\overline{\alpha}_{rx}(1))f(rx)\\
&=&\overline{\pi_{0}}(\overline{\alpha}_{rx}(1))f(rx)\\
&=&(qf)(rx)=f(rx)
\end{array}
\end{eqnarray*}
for all $r\in P$.
Thus, by applying the above computation, we get
\begin{eqnarray*}
\begin{array}{rcl}
[V_{x}V_{y}f](r)&=&[V_{x}(V_{y}f)](r)\\
&=&(V_{y}f)(rx)\\
&=&f((rx)y)\\
&=&f(r(xy))=[V_{xy}f](r).
\end{array}
\end{eqnarray*}
So, it follows that $V_{x}V_{y}=V_{xy}$ for all $x,y\in P$. Finally, we show that the partial-isometric representation $V$
satisfies the Nica covariance condition (\ref{Nica-1}). Let us first mention that the Hilbert space $\H$ is spanned by the elements
$$\{\varepsilon_{r}\otimes \overline{\pi_{0}}(\overline{\alpha}_{r}(1))h: r\in P, h\in H\}$$
as a closed subspace of $\ell^{2}(P)\otimes H$. Then, for every $y\in P$ and $f\in \H$, we have
$$(V_{y}^{*}f)(r)=(qW_{y}^{*}f)(r)=(q(S_{y}f))(r)=\overline{\pi_{0}}(\overline{\alpha}_{r}(1))(S_{y}f)(r).$$
Now, if $r=sy$ for some $s\in P$, which means that $r\in Py$, we get
$$(V_{y}^{*}f)(r)=\overline{\pi_{0}}(\overline{\alpha}_{sy}(1))(S_{y}f)(sy)=\overline{\pi_{0}}(\overline{\alpha}_{sy}(1))f(s).$$
Otherwise, $(V_{y}^{*}f)(r)=0$. It therefore follows that, if $r=sy$ for some $s\in P$, then
\begin{eqnarray*}
\begin{array}{rcl}
[V_{y}^{*}V_{y}f](r)&=&[V_{y}^{*}(V_{y}f)](r)\\
&=&\overline{\pi_{0}}(\overline{\alpha}_{sy}(1))(V_{y}f)(s)\\
&=&\overline{\pi_{0}}(\overline{\alpha}_{sy}(1))f(sy)\\
&=&(qf)(sy)=f(sy)=f(r).
\end{array}
\end{eqnarray*}
Otherwise, $[V_{y}^{*}V_{y}f](r)=0$. This implies that each $V_{y}^{*}V_{y}$ is the projection of $\H$ onto the closed subspace
$$\H_{y}:=\{f\in\H: f(r)=0\ \textrm{if}\ r\not\in Py\}=\ker (1-V_{y}^{*}V_{y})$$
of $\H$, which is actually spanned by the elements
$$\{\varepsilon_{sy}\otimes \overline{\pi_{0}}(\overline{\alpha}_{sy}(1))h: s\in P, h\in H\}.$$
Now, if $Px\cap Py=\emptyset$, then for every $f\in \H$,
\begin{eqnarray*}
\begin{array}{rcl}
[V_{x}V_{y}^{*}f](r)&=&[V_{x}(V_{y}^{*}f)](r)\\
&=&(V_{y}^{*}f)(rx)=0.
\end{array}
\end{eqnarray*}
This is due to the fact that, since $rx\in Px$, $rx\not\in Py$. So, it follows that $V_{x}V_{y}^{*}=0$, and hence,
$$V_{x}^{*}V_{x}V_{y}^{*}V_{y}=0.$$
If $Px\cap Py=Pz$, for every $f\in \H$,
\[
\big(V_{x}^{*}V_{x}(V_{y}^{*}V_{y}f)\big)(r)=
   \begin{cases}
      (V_{y}^{*}V_{y}f)(r) &\textrm{if}\empty\ \text{$r\in Px$,}\\
      0 &\textrm{otherwise.}\\
   \end{cases}
\]
Moreover, for $(V_{y}^{*}V_{y}f)(r)$, where $r\in Px$, we have
\[
(V_{y}^{*}V_{y}f)(r)=
   \begin{cases}
      f(r) &\textrm{if}\empty\ \text{$r\in Py$,}\\
      0 &\textrm{otherwise.}\\
   \end{cases}
\]
So, it follows that
\[
\big(V_{x}^{*}V_{x}(V_{y}^{*}V_{y}f)\big)(r)=
   \begin{cases}
      f(r) &\textrm{if}\empty\ \text{$r\in (Px\cap Py)=Pz$,}\\
      0 &\textrm{otherwise,}\\
   \end{cases}
\]
which is equal to $(V_{z}^{*}V_{z}f)(r)$. Therefore,
$$V_{x}^{*}V_{x}V_{y}^{*}V_{y}=V_{z}^{*}V_{z}.$$
Consequently, the pair $(\rho,V)$ is a (nontrivial) covariant partial-isometric representation of $(A,P,\alpha)$ on $\H$.

Note that, if $\pi_{0}$ is faithful, then it is not difficult to see that $\rho$ becomes faithful. Hence, every system
$(A,P,\alpha)$ has a (nontrivial) covariant pair $(\rho,V)$ with $\rho$ faithful.
\end{example}

\subsection{Crossed products and Nica-Teoplitz algebras of Hilbert bimodules}
\label{subsec:Piso-CP-NT-alg}
Let $P$ be a left LCM semigroup, and $(A,P,\alpha)$ a dynamical system consisting
of a $C^*$-algebra $A$, and an action $\alpha:P\rightarrow \End (A)$ of $P$ by extendible endomorphisms of $A$ such that
$\alpha_{e}=\id_{A}$.

\begin{definition}
\label{NT-CP-df}
A \emph{partial-isometric crossed product} of $(A,P,\alpha)$ is a triple $(B,i_{A},i_{P})$ consisting of a $C^*$-algebra $B$,
a nondegenerate injective homomorphism $i_{A}:A\rightarrow B$, and a Nica partial-isometric representation $i_{P}:P\rightarrow \M(B)$ such that:
\begin{itemize}
\item[(i)] the pair $(i_{A}, i_{P})$ is a covariant partial-isometric representation of $(A,P,\alpha)$ in $B$;
\item[(ii)] for every covariant partial-isometric representation $(\pi,V)$ of $(A,P,\alpha)$ on a Hilbert space $H$,
there exists a nondegenerate representation $\pi\times V: B\rightarrow B(H)$ such that $(\pi\times V) \circ i_{A}=\pi$
 and $(\overline{\pi\times V}) \circ i_{P}=V$; and
\item[(iii)] the $C^*$-algebra $B$ is generated by $\{i_{A}(a)i_{P}(x): a\in A, x\in P\}$.
\end{itemize}

We call the algebra $B$ the \emph{partial-isometric crossed product} of the system $(A,P,\alpha)$ and denote
it by $A\times_{\alpha}^{\piso} P$.
\end{definition}

\begin{remark}
\label{rmk-1}
Note that in the definition above, for part (iii), we actually have
\begin{align}
\label{span-B}
B=\overline{\newspan}\{i_{P}(x)^{*} i_{A}(a) i_{P}(y) : x,y \in P, a\in A\}.
\end{align}
To see this, we only need to show that the right hand side of $(\ref{span-B})$ is closed under multiplication. To do so, we apply
the Nica covariance condition to calculate each product
\begin{align}
\label{prod-1}
[i_{P}(x)^{*} i_{A}(a) i_{P}(y)][i_{P}(s)^{*} i_{A}(b) i_{P}(t)].
\end{align}
We have
\begin{eqnarray*}
\begin{array}{l}
[i_{P}(x)^{*} i_{A}(a) i_{P}(y)][i_{P}(s)^{*} i_{A}(b) i_{P}(t)]\\
=i_{P}(x)^{*} i_{A}(a) i_{P}(y)[i_{P}(y)^{*}i_{P}(y)i_{P}(s)^{*}i_{P}(s)]i_{P}(s)^{*} i_{A}(b) i_{P}(t),
\end{array}
\end{eqnarray*}
which is zero if $Py\cap Ps=\emptyset$. But if $Py\cap Ps=Pz$ for some $z\in P$, then $ry=z=qs$ for some $r,q\in P$,
and therefore by the covariance of the pair $(i_{A}, i_{P})$, we get
\begin{eqnarray*}
\begin{array}{l}
[i_{P}(x)^{*} i_{A}(a) i_{P}(y)][i_{P}(s)^{*} i_{A}(b) i_{P}(t)]\\
=i_{P}(x)^{*} i_{A}(a) i_{P}(y)i_{P}(z)^{*}i_{P}(z)i_{P}(s)^{*} i_{A}(b) i_{P}(t)\\
=i_{P}(x)^{*} i_{A}(a) i_{P}(y)i_{P}(ry)^{*}i_{P}(qs)i_{P}(s)^{*} i_{A}(b) i_{P}(t)\\
=i_{P}(x)^{*} i_{A}(a) [i_{P}(y)i_{P}(y)^{*}]i_{P}(r)^{*}i_{P}(q)[i_{P}(s)i_{P}(s)^{*}] i_{A}(b) i_{P}(t)\\
=i_{P}(x)^{*} i_{A}(a) \overline{i_{A}}(\overline{\alpha}_{y}(1)) i_{P}(r)^{*}i_{P}(q) \overline{i_{A}}(\overline{\alpha}_{s}(1))
i_{A}(b) i_{P}(t)\\
=i_{P}(x)^{*} i_{A}(a\overline{\alpha}_{y}(1)) i_{P}(r)^{*}i_{P}(q) i_{A}(\overline{\alpha}_{s}(1)b) i_{P}(t)\\
=i_{P}(x)^{*}i_{P}(r)^{*} i_{A}(\alpha_{r}(c))  i_{A}(\alpha_{q}(d)) i_{P}(q) i_{P}(t)\\
=i_{P}(rx)^{*} i_{A}(\alpha_{r}(c)\alpha_{q}(d)) i_{P}(qt),\\
\end{array}
\end{eqnarray*}
which is in the right hand side of $(\ref{span-B})$, where $c=a\overline{\alpha}_{y}(1)$ and $d=\overline{\alpha}_{s}(1)b$.
Thus, $(\ref{span-B})$ is indeed true.
\end{remark}

Next, we want to show that the partial-isometric crossed product of the system $(A,P,\alpha)$ always exists, and it is unique
up to isomorphism. Firstly, since $P$ is a left LCM semigroup, the opposite semigroup $P^{\textrm{o}}$ is a right LCM semigroup.
Therefore, one can easily see that $(A,P^{\textrm{o}},\alpha)$ is a dynamical system in the sense of
\cite[Definition 3.1]{Bar1}. Then, following \cite[\S3]{Fowler} (see also \cite[\S3]{Bar1}), for every $s\in P$, let
$$X_{s}:=\{s\}\times \overline{\alpha}_{s}(1)A,$$
where
$\overline{\alpha}_{s}(1)A=\alpha_{s}(A)A=\clsp\{\alpha_{s}(a)b: a,b\in A\}$
as each endomorphism $\alpha_{s}$ is extendible. Now, each $X_{s}$ is given
the structure of a Hilbert bimodule over $A$ via
$$(s,x)\cdot a:=(s,xa),\ \ \langle (s,x), (s,y)\rangle_{A}:=x^{*}y,$$ and $$a\cdot (s,x):=(s,\alpha_{s}(a)x).$$
Let $X=\bigsqcup_{s\in P} X_{s}$, which is equipped with a multiplication
$$X_{s}\times X_{t}\rightarrow X_{s\bullet t};\ \ ((s,x),(t,y))\mapsto (s,x)(t,y)$$
defined by
$$(s,x)(t,y):=(ts,\alpha_{t}(x)y)=(s\bullet t,\alpha_{t}(x)y)$$
for every $x\in \overline{\alpha}_{s}(1)A$ and $y\in \overline{\alpha}_{t}(1)A$.
By \cite[Lemma 3.2]{Fowler}, $X$ is a product system over the opposite semigroup $P^{\textrm{o}}$ of essential Hilbert bimodules,
and the left action of $A$ on each fiber $X_{s}$ is by compact operators. So, $X$ is compactly aligned
by \cite[Proposition 5.8]{Fowler}. Let $(\mathcal{N}\T(X), i_{X})$ be the Nica-Toeplitz algebra corresponding to $X$ (see \cite{Fowler},
\cite[\S 6]{NB}, and \cite{Bar1}), which is generated by the universal Nica covariant Toeplitz representation
$i_{X}: X\rightarrow \mathcal{N}\T(X)$. We show that this algebra is the partial-isometric crossed product
of the system $(A,P,\alpha)$. But we first need to recall that, for any approximate unit $\{a_{i}\}$ in $A$, by a similar discussion to
\cite[Lemma 3.3]{Fowler}, one can see that $i_{X}(s,\alpha_{s}(a_{i}))$ converges strictly in
the multiplier algebra $\M(\mathcal{N}\T(X))$ for every $s\in P$. Now, we have:

\begin{prop}
\label{CP-exists}
Suppose that $P$ is a left LCM semigroup, and $(A,P,\alpha)$ a dynamical system. Let $\{a_{i}\}$ be any approximate unit in $A$.
Define the maps
$$i_{A}:A\rightarrow \mathcal{N}\T(X)\ \ \textrm{and}\ \ i_{P}:P\rightarrow \M(\mathcal{N}\T(X))$$
by
$$i_{A}(a):=i_{X}(e,a)\ \ \textrm{and}\ \ i_{P}(s):=\lim_{i} i_{X}(s,\alpha_{s}(a_{i}))^{*}\ (\textrm{strictly convergence})$$
for all $a\in A$ and $s\in P$. Then the triple $(\mathcal{N}\T(X),i_{A},i_{P})$ is a partial-isometric crossed product
for $(A,P,\alpha)$, which is unique up to isomorphism.
\end{prop}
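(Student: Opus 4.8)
The plan is to verify the three defining properties of a partial-isometric crossed product for the triple $(\mathcal{N}\T(X),i_{A},i_{P})$, and then establish uniqueness by a standard universal-property argument. First I would check that the formulas defining $i_{A}$ and $i_{P}$ actually make sense: $i_{A}=i_{X}\circ(e,\cdot)$ is a homomorphism because the pair $(i_{X,e},i_{X,e})$ is a Toeplitz representation of $X_{e}={}_{A}A_{A}$, and it is injective because $i_{X}$ is isometric on each fiber (as recalled from \cite[Proposition 2.8]{Fowler}); nondegeneracy follows since $\{(e,a_{i})\}$ is an approximate unit pattern whose image acts as an approximate unit. For $i_{P}$, the strict limit exists by the remark preceding the proposition (mirroring \cite[Lemma 3.3]{Fowler}), and each $i_{X}(s,\alpha_{s}(a_{i}))$ is, up to the limit, a partial isometry: indeed $i_{X,s}(s,x)^{*}i_{X,s}(s,x)=i_{A}(\langle(s,x),(s,x)\rangle_{A})=i_{A}(x^{*}x)$, so the generators have the right structure and the limit $i_{P}(s)$ is a partial isometry in $\M(\mathcal{N}\T(X))$.

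Next I would establish the covariance relations (i), i.e.\ that $(i_{A},i_{P})$ satisfies \eqref{cov1}, equivalently \eqref{cov2} by Lemma \ref{cov2-lemma}. The key computations are $i_{A}(\alpha_{x}(a))i_{P}(x)=i_{P}(x)i_{A}(a)$ and $i_{P}(x)i_{P}(x)^{*}=\overline{i_{A}}(\overline{\alpha}_{x}(1))$. These should drop out of the Toeplitz relations for $(i_{X,s},i_{X,e})$ together with the product-system multiplication: using $i_{X}(s,\alpha_{s}(a_{i}))=i_{X,s}(s,\alpha_{s}(a_{i}))$ and the identity $(e,a)(s,\alpha_{s}(a_{i}))=(s,\alpha_{s}(a)\alpha_{s}(a_{i}))=(s,\alpha_{s}(aa_{i}))$ versus $(s,\alpha_{s}(a_{i}))(e,b)=(s,\alpha_{s}(a_{i})b)$, one takes adjoints and passes to the strict limit to get the two equations in \eqref{cov2}. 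The relation $i_{P}(x)i_{P}(x)^{*}=\overline{i_{A}}(\overline{\alpha}_{x}(1))$ comes from the Pimsner homomorphism: $i_{X}(s,\alpha_{s}(a_{i}))i_{X}(s,\alpha_{s}(a_{j}))^{*}=i_{X}^{(s)}(\Theta_{(s,\alpha_{s}(a_{i})),(s,\alpha_{s}(a_{j}))})$, and the left action of $A$ on $X_{s}$ being by compacts (from \cite[Lemma 3.2]{Fowler}) lets one identify the strict limit with $\overline{i_{A}}(\overline{\alpha}_{x}(1))$. Then I would verify that $i_{P}$ is Nica (in the sense of \eqref{Nica-1}): this is exactly the translation of the Nica covariance of $i_{X}$ on the product system $X$ over the right LCM semigroup $P^{\textrm{o}}$ into the language of initial projections $i_{P}(x)^{*}i_{P}(x)=i_{A}(\overline{\alpha}_{x}(1))$, using that $P^{\textrm{o}}x\bullet P^{\textrm{o}}=\cdots$ corresponds to $Px$ in $P$.

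For the universal property (ii), given a covariant partial-isometric representation $(\pi,V)$ of $(A,P,\alpha)$ on $H$, the plan is to build a Nica covariant Toeplitz representation $\psi$ of $X$ on $H$ by setting $\psi(s,x):=\pi(x)^{*}V_{s}^{*}$ on the generators $x\in\overline{\alpha}_{s}(1)A$, or more precisely $\psi_{s}(s,\alpha_{s}(a)b):=\overline{\pi}(\overline{\alpha}_{s}(1))\cdots$ — the exact formula must be chosen so that the three Toeplitz identities and Nica covariance follow from \eqref{cov1}, Lemma \ref{cov2-lemma}, and the Nica condition \eqref{Nica-1} on $V$. By universality of $\mathcal{N}\T(X)$ there is a representation $\psi_{*}$ with $\psi_{*}\circ i_{X}=\psi$; one sets $\pi\times V:=\psi_{*}$ and checks $(\pi\times V)\circ i_{A}=\pi$ and $(\overline{\pi\times V})\circ i_{P}=V$ by evaluating on the generators and passing to the strict limit in the definition of $i_{P}$. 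Property (iii) — that $\mathcal{N}\T(X)$ is generated by $\{i_{A}(a)i_{P}(x)\}$ — follows because $\mathcal{N}\T(X)$ is generated by $\{i_{X}(s,x)\}=\{i_{X}(s,\alpha_{s}(a)b)\}$ and each such generator is a strict limit of products $i_{A}(\alpha_{s}(a)b)i_{P}(s)^{*}$ (after taking adjoints, using $i_{X}(s,\alpha_{s}(a_{i}))=\lim$ and the bimodule structure), so the generated algebra coincides with $\overline{\operatorname{span}}\{i_{P}(x)^{*}i_{A}(a)i_{P}(y)\}$ as in Remark \ref{rmk-1}. Uniqueness up to isomorphism is the usual argument: any two triples satisfying (i)--(iii) admit mutually inverse homomorphisms by applying each one's universal property to the other's covariant pair, and these are surjective by (iii) and isometric because they are injective on a dense spanning set compatible with the $C^{*}$-norms. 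The main obstacle I anticipate is pinning down the correct formula for $\psi$ in (ii) — ensuring that the identifications $X_{s}=\{s\}\times\overline{\alpha}_{s}(1)A$, the left action $a\cdot(s,x)=(s,\alpha_{s}(a)x)$, and the multiplication $(s,x)(t,y)=(s\bullet t,\alpha_{t}(x)y)$ over the \emph{opposite} semigroup all match up with the adjoint convention $i_{P}(s)=\lim i_{X}(s,\alpha_{s}(a_{i}))^{*}$, so that the direction of all the relations is consistent; the rest is careful but routine bookkeeping largely parallel to \cite[\S4]{LR} and \cite[\S3]{Fowler}.
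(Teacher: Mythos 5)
Your overall architecture is the same as the paper's: pass to the product system $X$ over $P^{\textrm{o}}$, verify (i)--(iii) against the universal Nica covariant representation $i_{X}$, and get uniqueness from the universal property. However, there are two concrete gaps. First, the claim that the Nica covariance of $i_{P}$ is ``exactly the translation'' of the Nica covariance of $i_{X}$ hides the entire substantive step. What the Nica covariance of $i_{X}$ gives you, via the identity $i_{A}(ab^{*})i_{P}(s)^{*}i_{P}(s)=i_{X}^{(s)}\big(\Theta_{(s,\alpha_{s}(a)),(s,\alpha_{s}(b))}\big)$, is that the product $i_{P}(s)^{*}i_{P}(s)\,i_{P}(t)^{*}i_{P}(t)$ (cut down by $i_{A}$ of products of elements of $A$) equals $i_{X}^{(r)}$ applied to $\iota_{s}^{r}\big(\Theta_{(s,\alpha_{s}(a)),(s,\alpha_{s}(b))}\big)\iota_{t}^{r}\big(\Theta_{(t,\alpha_{t}(c)),(t,\alpha_{t}(d))}\big)$ when $Ps\cap Pt=Pr$. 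To conclude that this equals $i_{A}(ab^{*}cd^{*})i_{P}(r)^{*}i_{P}(r)$ you must identify that product of amplified rank-one operators as $\Theta_{(r,\alpha_{r}(ab^{*})),(r,\alpha_{r}(dc^{*}))}$, and this computation on the spanning elements of $X_{r}\simeq X_{t}\otimes_{A}X_{q}$ (followed by an approximate-unit argument to strip off $i_{A}$) is most of the paper's proof. Your sketch contains no trace of it. Relatedly, you have the projections backwards: it is the \emph{final} projection $i_{P}(x)i_{P}(x)^{*}=\lim_{i,j}i_{X}(x,\alpha_{x}(a_{i}))^{*}i_{X}(x,\alpha_{x}(a_{j}))$ that equals $\overline{i_{A}}(\overline{\alpha}_{x}(1))$ via the inner product; the \emph{initial} projection $i_{P}(x)^{*}i_{P}(x)$ is the strict limit of $i_{X}^{(x)}$ of an approximate unit of $\K(X_{x})$, and it is this object, not $i_{A}(\overline{\alpha}_{x}(1))$, that enters the condition (\ref{Nica-1}).

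Second, your tentative formula $\psi(s,x):=\pi(x)^{*}V_{s}^{*}$ for the Toeplitz representation in part (ii) cannot be right: it is conjugate-linear in $x$, so it is not even a linear map on the fiber $X_{s}$, let alone a Toeplitz representation. The correct formula is $\psi(s,x):=V_{s}^{*}\pi(x)$, and the paper does not re-derive the Toeplitz and Nica identities for it by hand but observes that $(\pi,V^{*})$ is a Nica covariant representation of $(A,P^{\textrm{o}},\alpha)$ in the sense of \cite{Bar1} and invokes \cite[Proposition 3.11]{Bar1}. You flagged this as your main anticipated obstacle, but flagging it does not discharge it. A smaller point: your injectivity argument for $i_{A}$ appeals to $i_{X}$ being isometric, which the paper's preliminaries only record for the Toeplitz algebra $\T(X)$, not for $\mathcal{N}\T(X)$; the paper instead deduces injectivity from the existence of a covariant pair $(\pi,V)$ with $\pi$ faithful (Example \ref{exmp-piso-rep}) together with $(\pi\times V)\circ i_{A}=\pi$, which is self-contained and is the route you should take here.
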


\begin{proof}
For any approximate unit $\{a_{i}\}$ in $A$,
$$i_{A}(a_{i})=i_{X}(e,a_{i})=i_{X}(e,\alpha_{e}(a_{i}))$$
converges strictly to $1$ in the multiplier algebra $\M(\mathcal{N}\T(X))$. One can see this again by \cite[Lemma 3.3]{Fowler}
similarly when $s=e$. It follows that $i_{A}$ is a nondegenerate homomorphism. By a similar discussion to the first part
of the proof of \cite[Proposition 3.4]{Fowler}, we can see that the map $i_{P}$ is a partial-isometric representation such
that together with the (nondegenerate) homomorphism $i_{A}$ satisfy the covariance equations
$$i_{A}(\alpha_{s}(a))=i_{P}(s)i_{A}(a)i_{P}(s)^{*}\ \ \textrm{and}\ \ i_{A}(a)i_{P}(s)^{*}i_{P}(s)=i_{P}(s)^{*}i_{P}(s)i_{A}(a)$$
for all $a\in A$ and $s\in P$. So, we only need to show that the representation $i_{P}$ satisfies the Nica covariance condition.
By the same calculation as (3.7) in the proof of \cite[Proposition 3.4]{Fowler}, we have
\begin{align}
\label{eq6}
i_{A}(ab^{*})i_{P}(s)^{*}i_{P}(s)=i_{X}(s,\alpha_{s}(a))i_{X}(s,\alpha_{s}(b))^{*}
\end{align}
for all $a,b\in A$ and $s\in P$, and since
$$i_{X}(s,\alpha_{s}(a))i_{X}(s,\alpha_{s}(b))^{*}=i_{X}^{(s)}(\Theta_{(s,\alpha_{s}(a)),(s,\alpha_{s}(b))}),$$
it follows that
\begin{align}
\label{eq7}
i_{A}(ab^{*})i_{P}(s)^{*}i_{P}(s)=i_{X}^{(s)}(\Theta_{(s,\alpha_{s}(a)),(s,\alpha_{s}(b))}).
\end{align}
Therefore,
$$i_{A}(ab^{*})i_{P}(s)^{*}i_{P}(s)i_{A}(cd^{*})i_{P}(t)^{*}i_{P}(t)=
i_{X}^{(s)}(\Theta_{(s,\alpha_{s}(a)),(s,\alpha_{s}(b))}) i_{X}^{(t)}(\Theta_{(t,\alpha_{t}(c)),(t,\alpha_{t}(d))}),$$
and since
\begin{eqnarray*}
\begin{array}{rcl}
i_{A}(ab^{*})i_{P}(s)^{*}i_{P}(s)i_{A}(cd^{*})i_{P}(t)^{*}i_{P}(t)&=&
i_{A}(ab^{*})i_{A}(cd^{*})i_{P}(s)^{*}i_{P}(s)i_{P}(t)^{*}i_{P}(t)\\
&=&i_{A}(ab^{*}(cd^{*}))i_{P}(s)^{*}i_{P}(s)i_{P}(t)^{*}i_{P}(t),
\end{array}
\end{eqnarray*}
it follows that
\begin{align}
\label{eq8}
i_{A}(ab^{*}(cd^{*}))i_{P}(s)^{*}i_{P}(s)i_{P}(t)^{*}i_{P}(t)=
i_{X}^{(s)}(\Theta_{(s,\alpha_{s}(a)),(s,\alpha_{s}(b))}) i_{X}^{(t)}(\Theta_{(t,\alpha_{t}(c)),(t,\alpha_{t}(d))}).
\end{align}
Now, if $Ps\cap Pt=Pr$ for some $r\in P$, which is equivalent to saying that
$$s\bullet P^{o}\cap t\bullet P^{\textrm{o}}=r\bullet P^{\textrm{o}},$$
since $i_{X}$ is Nica covariant, we have
\begin{align}
\label{eq9}
i_{A}(ab^{*}(cd^{*}))i_{P}(s)^{*}i_{P}(s)i_{P}(t)^{*}i_{P}(t)=
i_{X}^{(r)}\bigg(\iota_{s}^{r}\big(\Theta_{(s,\alpha_{s}(a)),(s,\alpha_{s}(b))}\big) \iota_{t}^{r}\big(\Theta_{(t,\alpha_{t}(c)),(t,\alpha_{t}(d))}\big)\bigg).
\end{align}
Next, we want to calculate the product
$$\iota_{s}^{r}\big(\Theta_{(s,\alpha_{s}(a)),(s,\alpha_{s}(b))}\big) \iota_{t}^{r}\big(\Theta_{(t,\alpha_{t}(c)),(t,\alpha_{t}(d))}\big)$$
of compact operators in $\K(X_{r})$ to show that it is equal to
$$\Theta_{(r,\alpha_{r}(ab^{*})),(r,\alpha_{r}(dc^{*}))}.$$
Since $s\bullet p=r=t\bullet q$ for some $p,q\in P$, and $X_{t}\otimes_{A}X_{q}\simeq X_{t\bullet q}$, it is enough to see this on the
spanning elements $(t,\overline{\alpha}_{t}(1)f)(q,\overline{\alpha}_{q}(1)g)$ of $X_{t\bullet q}=X_{r}$,
where $f,g\in A$. First, $(t,\overline{\alpha}_{t}(1)f)(q,\overline{\alpha}_{q}(1)g)$ by
$\iota_{t}^{r}(\Theta_{(t,\alpha_{t}(c)),(t,\alpha_{t}(d))})$ is mapped to
\begin{eqnarray*}
\begin{array}{l}
\iota_{t}^{t\bullet q}\big(\Theta_{(t,\alpha_{t}(c)),(t,\alpha_{t}(d))}\big)
\big((t,\overline{\alpha}_{t}(1)f)(q,\overline{\alpha}_{q}(1)g)\big)\\
=\big(\Theta_{(t,\alpha_{t}(c)),(t,\alpha_{t}(d))}(t,\overline{\alpha}_{t}(1)f)\big)(q,\overline{\alpha}_{q}(1)g)\\
=\big((t,\alpha_{t}(c))\cdot\big\langle (t,\alpha_{t}(d)),(t,\overline{\alpha}_{t}(1)f) \big\rangle_{A} \big)
(q,\overline{\alpha}_{q}(1)g)\\
=\big((t,\alpha_{t}(c))\cdot(\alpha_{t}(d^{*})\overline{\alpha}_{t}(1)f)\big) (q,\overline{\alpha}_{q}(1)g)\\
=\big((t,\alpha_{t}(c))\cdot(\alpha_{t}(d^{*})f)\big) (q,\overline{\alpha}_{q}(1)g)\\
=\big((t,\alpha_{t}(c)\alpha_{t}(d^{*})f)\big) (q,\overline{\alpha}_{q}(1)g)\\
=(t,\alpha_{t}(cd^{*})f) (q,\overline{\alpha}_{q}(1)g)\\
=(qt,\alpha_{q}(\alpha_{t}(cd^{*})f)\overline{\alpha}_{q}(1)g)\\
=(qt,\alpha_{q}(\alpha_{t}(cd^{*})f)g)\\
=(qt,\alpha_{qt}(cd^{*})\alpha_{q}(f)g)\\
=(t\bullet q,\alpha_{t\bullet q}(cd^{*})\alpha_{q}(f)g)\\
=(r,\alpha_{r}(cd^{*})\alpha_{q}(f)g)\\
=(s\bullet p,\alpha_{s\bullet p}(cd^{*})\alpha_{q}(f)g)\\
=(ps,\alpha_{ps}(cd^{*})\alpha_{q}(f)g)=(s,\alpha_{s}(cd^{*}))(p,\overline{\alpha}_{p}(1)\alpha_{q}(f)g).\\
\end{array}
\end{eqnarray*}
We then let $\iota_{s}^{r}(\Theta_{(s,\alpha_{s}(a)),(s,\alpha_{s}(b))})$ act on
$(s,\alpha_{s}(cd^{*}))(p,\overline{\alpha}_{p}(1)\alpha_{q}(f)g)$, and hence,
\begin{eqnarray*}
\begin{array}{l}
\iota_{s}^{s\bullet p}\big(\Theta_{(s,\alpha_{s}(a)),(s,\alpha_{s}(b))}\big)
\big((s,\alpha_{s}(cd^{*}))(p,\overline{\alpha}_{p}(1)\alpha_{q}(f)g)\big)\\
=\big(\Theta_{(s,\alpha_{s}(a)),(s,\alpha_{s}(b))}(s,\alpha_{s}(cd^{*}))\big)(p,\overline{\alpha}_{p}(1)\alpha_{q}(f)g)\\
=\big((s,\alpha_{s}(a))\cdot\big\langle (s,\alpha_{s}(b)),(s,\alpha_{s}(cd^{*})) \big\rangle_{A} \big)
(p,\overline{\alpha}_{p}(1)\alpha_{q}(f)g)\\
=\big((s,\alpha_{s}(a))\cdot[\alpha_{s}(b^{*})\alpha_{s}(cd^{*})]\big) (p,\overline{\alpha}_{p}(1)\alpha_{q}(f)g)\\
=\big((s,\alpha_{s}(a))\cdot[\alpha_{s}(b^{*}cd^{*})]\big) (p,\overline{\alpha}_{p}(1)\alpha_{q}(f)g)\\
=(s,\alpha_{s}(a)\alpha_{s}(b^{*}cd^{*})) (p,\overline{\alpha}_{p}(1)\alpha_{q}(f)g)\\
=(s,\alpha_{s}(ab^{*}cd^{*})) (p,\overline{\alpha}_{p}(1)\alpha_{q}(f)g)\\
=(ps,\alpha_{p}(\alpha_{s}(ab^{*}cd^{*}))\overline{\alpha}_{p}(1)\alpha_{q}(f)g)\\
=(ps,\alpha_{p}(\alpha_{s}(ab^{*}cd^{*}))\alpha_{q}(f)g)\\
=(ps,\alpha_{ps}(ab^{*}cd^{*})\alpha_{q}(f)g)\\
=(s\bullet p,\alpha_{s\bullet p}(ab^{*}cd^{*})\alpha_{q}(f)g)=(r,\alpha_{r}(ab^{*}cd^{*})\alpha_{q}(f)g).\\
\end{array}
\end{eqnarray*}
Thus, it follows that
\begin{align}
\label{eq10}
\iota_{s}^{r}\big(\Theta_{(s,\alpha_{s}(a)),(s,\alpha_{s}(b))}\big) \iota_{t}^{r}\big(\Theta_{(t,\alpha_{t}(c)),(t,\alpha_{t}(d))}\big)
\big((t,\overline{\alpha}_{t}(1)f)(q,\overline{\alpha}_{q}(1)g)\big)=(r,\alpha_{r}(ab^{*}cd^{*})\alpha_{q}(f)g).
\end{align}
On the other hand, since
\begin{eqnarray*}
\begin{array}{rcl}
(t,\overline{\alpha}_{t}(1)f)(q,\overline{\alpha}_{q}(1)g)&=&(qt,\alpha_{q}(\overline{\alpha}_{t}(1)f)\overline{\alpha}_{q}(1)g)\\
&=&(qt,\alpha_{q}(\overline{\alpha}_{t}(1)f)g)\\
&=&(qt,\overline{\alpha}_{q}(\overline{\alpha}_{t}(1))\alpha_{q}(f)g)\\
&=&(qt,\overline{\alpha}_{qt}(1)\alpha_{q}(f)g)\\
&=&(r,\overline{\alpha}_{r}(1)\alpha_{q}(f)g),
\end{array}
\end{eqnarray*}
we have
\begin{eqnarray}
\label{eq11}
\begin{array}{l}
\Theta_{(r,\alpha_{r}(ab^{*})),(r,\alpha_{r}(dc^{*}))}\big((t,\overline{\alpha}_{t}(1)f)(q,\overline{\alpha}_{q}(1)g)\big)\\
=\Theta_{(r,\alpha_{r}(ab^{*})),(r,\alpha_{r}(dc^{*}))}(r,\overline{\alpha}_{r}(1)\alpha_{q}(f)g)\\
=(r,\alpha_{r}(ab^{*}))\cdot\big\langle (r,\alpha_{r}(dc^{*})),(r,\overline{\alpha}_{r}(1)\alpha_{q}(f)g) \big\rangle_{A}\\
=(r,\alpha_{r}(ab^{*}))\cdot[\alpha_{r}(dc^{*})^{*}\overline{\alpha}_{r}(1)\alpha_{q}(f)g]\\
=(r,\alpha_{r}(ab^{*}))\cdot[\alpha_{r}(cd^{*})\overline{\alpha}_{r}(1)\alpha_{q}(f)g]\\
=(r,\alpha_{r}(ab^{*}))\cdot[\alpha_{r}(cd^{*})\alpha_{q}(f)g]\\
=(r,\alpha_{r}(ab^{*})\alpha_{r}(cd^{*})\alpha_{q}(f)g)\\
=(r,\alpha_{r}(ab^{*}cd^{*})\alpha_{q}(f)g).
\end{array}
\end{eqnarray}
So, we conclude by (\ref{eq10}) and (\ref{eq11}) that
\begin{align}
\label{eq12}
\iota_{s}^{r}\big(\Theta_{(s,\alpha_{s}(a)),(s,\alpha_{s}(b))}\big) \iota_{t}^{r}\big(\Theta_{(t,\alpha_{t}(c)),(t,\alpha_{t}(d))}\big)
=\Theta_{(r,\alpha_{r}(ab^{*})),(r,\alpha_{r}(dc^{*}))}.
\end{align}
Consequently, if $Ps\cap Pt=Pr$, then by applying
(\ref{eq12}), (\ref{eq9}), and (\ref{eq7}), we get
\begin{eqnarray*}
\begin{array}{rcl}
i_{A}(ab^{*}(cd^{*}))i_{P}(s)^{*}i_{P}(s)i_{P}(t)^{*}i_{P}(t)&=&
i_{X}^{(r)}\bigg(\iota_{s}^{r}\big(\Theta_{(s,\alpha_{s}(a)),(s,\alpha_{s}(b))}\big) \iota_{t}^{r}\big(\Theta_{(t,\alpha_{t}(c)),(t,\alpha_{t}(d))}\big)\bigg)\\
&=&i_{X}^{(r)}\big(\Theta_{(r,\alpha_{r}(ab^{*})),(r,\alpha_{r}(dc^{*}))}\big)\\
&=&i_{A}(ab^{*}(dc^{*})^{*})i_{P}(r)^{*}i_{P}(r)\\
&=&i_{A}(ab^{*}(cd^{*}))i_{P}(r)^{*}i_{P}(r).
\end{array}
\end{eqnarray*}
We therefore have
\begin{align}
\label{eq13}
i_{A}(ab^{*}cd^{*})i_{P}(s)^{*}i_{P}(s)i_{P}(t)^{*}i_{P}(t)=i_{A}(ab^{*}cd^{*})i_{P}(r)^{*}i_{P}(r)
\end{align}
for all $a,b,c,d\in A$. Since $A$ contains an approximate unit, it follows by (\ref{eq13}) that we must have
$$i_{P}(s)^{*}i_{P}(s)i_{P}(t)^{*}i_{P}(t)=i_{P}(r)^{*}i_{P}(r)$$
when $Ps\cap Pt=Pr$. If $Ps\cap Pt=\emptyset$, then again, since $i_{X}$ is Nica covariant, the right hand side of (\ref{eq8})
is zero, and therefore,
\begin{align}
\label{eq16}
i_{A}(ab^{*}(cd^{*}))i_{P}(s)^{*}i_{P}(s)i_{P}(t)^{*}i_{P}(t)=0
\end{align}
for all $a,b,c,d\in A$. Thus, similar to the above, as $A$ contains an approximate unit, we conclude that
$$i_{P}(s)^{*}i_{P}(s)i_{P}(t)^{*}i_{P}(t)=0.$$
So, the pair $(i_{A}, i_{P})$ is a covariant partial-isometric representation of $(A,P,\alpha)$ in
the algebra $\mathcal{N}\T(X)$, and therefore, condition (i) in Definition \ref{NT-CP-df} is satisfied.

Next, suppose that $(\pi,V)$ is a covariant partial-isometric representation of $(A,P,\alpha)$ on a Hilbert space $H$. Then, the pair
$(\pi,V^{*})$ is a representation of the system $(A,P^{\textrm{o}},\alpha)$ in the sense of \cite[Definition 3.2]{Bar1},
which is Nica covariant. Note that the semigroup homomorphism $V^{*}: P^{\textrm{o}}\rightarrow B(H)$ is defined by $s\mapsto V_{s}^{*}$.
Therefore, by \cite[Proposition 3.11]{Bar1}, the map $\psi:X\rightarrow B(H)$ defined by
$$\psi(s,x):=V_{s}^{*}\pi(x)$$
is a nondegenerate Nica covariant Toeplitz representation of $X$ on $H$ (see also \cite[Proposition 9.2]{Fowler}).
So, there is a homomorphism $\psi_{*}:\mathcal{N}\T(X)\rightarrow B(H)$ such that $\psi_{*}\circ i_{X}=\psi$ (see \cite{Fowler,NB}),
which is nondegenerate. Let $\pi\times V=\psi_{*}$. Then
$$(\pi\times V)(i_{A}(a))=\psi_{*}(i_{X}(e,a))=\psi(e,a)=V_{e}^{*}\pi(a)=\pi(a)$$
for all $a\in A$. Also, since $\pi\times V$ is nondegenerate, we have
\begin{eqnarray*}
\begin{array}{rcl}
\overline{(\pi\times V)}(i_{P}(s))&=&\overline{(\pi\times V)}\big(\lim_{i} i_{X}(s,\alpha_{s}(a_{i}))^{*}\big)\\
&=&\lim_{i} (\pi\times V)\big(i_{X}(s,\alpha_{s}(a_{i}))^{*}\big)\\
&=&\lim_{i} \psi_{*}\big(i_{X}(s,\alpha_{s}(a_{i}))\big)^{*}\\
&=&\lim_{i} \psi(s,\alpha_{s}(a_{i}))^{*}\\
&=&\lim_{i} [V_{s}^{*}\pi(\alpha_{s}(a_{i}))]^{*}\\
&=&\lim_{i} [\pi(a_{i})V_{s}^{*}]^{*}\ \ \ (\textrm{by the covariance of}\ (\pi,V))\\
&=&\lim_{i} V_{s}\pi(a_{i})=V_{s}
\end{array}
\end{eqnarray*}
for all $s\in P$. Thus, condition (ii) in Definition \ref{NT-CP-df} is satisfied, too.

Finally, condition (iii) also holds as the elements of the form
$i_{X}(s,\overline{\alpha}_{s}(1)a^{*})^{*}$ generate $\mathcal{N}\T(X)$, and
$$i_{X}(s,\overline{\alpha}_{s}(1)a^{*})^{*}=i_{A}(a)i_{P}(s),$$
which follows by a simple computation.

To see that the homomorphism $i_{A}$ is injective, we recall from Example \ref{exmp-piso-rep} that the
system $(A,P,\alpha)$ admits a (nontrivial) covariant partial-isometric representation $(\pi,V)$ with $\pi$ faithful. Therefore,
it follows from the equation $(\pi\times V)\circ i_{A}=\pi$ that $i_{A}$ must be injective.

For uniqueness, suppose that $(C,j_{A},j_{P})$ is another triple which satisfies conditions (i)-(iii) in
Definition \ref{NT-CP-df}. Then, by applying the universal properties (condition (ii)) of the
algebras $C$ and $\mathcal{N}\T(X)$, once can see that there is an isomorphism of
$C$ onto $\mathcal{N}\T(X)$ which maps the pair $(j_{A},j_{P})$ to the pair $(i_{A},i_{P})$.
\end{proof}

\begin{remark}
\label{faithful-rep}
Recall that when $P$ is the positive cone of an abelian lattice-ordered group $G$, by \cite[Theorem 9.3]{Fowler},
a covariant partial-isometric representation $(\pi, V)$ of $(A,P,\alpha)$ on a Hilbert space $H$ induces a
faithful representation $\pi\times V$ of $A\times_{\alpha}^{\piso} P$ if and only if, for every
finite subset $F=\{x_{1}, x_{2}, ..., x_{n}\}$ of $P\backslash \{e\}$, $\pi$ is faithful on the range of
$$\prod_{i=1}^{n}(1-V_{x_{i}}^{*}V_{x_{i}}).$$
Also, note that, by \cite[Theorem 3.13]{Bar1}, a similar necessary and sufficient condition for the faithfulness of the
representation $\pi\times V$ of $A\times_{\alpha}^{\piso} P$ can be obtained for more general semigroups $P$, namely, LCM semigroups.
(see also \cite[Theorem 3.2]{Flet}).
\end{remark}

Suppose that $(A,P,\alpha)$ is a dynamical system, and $I$ is an ideal of $A$ such that
$\alpha_{s}(I)\subset I$ for all $s\in P$. To define a crossed product $I\times_{\alpha}^{\piso} P$ which we want it to sit naturally in
$A\times_{\alpha}^{\piso} P$ as an ideal, we need some extra condition. So, we need to recall
a definition from \cite{Ad-TH}. Let $\alpha$ be an extendible endomorphism of a $C^{*}$-algebra $A$, and $I$ an ideal of $A$.
Suppose that $\psi:A\rightarrow \M(I)$ is the canonical nondegenerate homomorphism defined
by $\psi(a)i=ai$ for all $a\in A$ and $i\in I$. Then, we say $I$ is
\emph{extendible $\alpha$-invariant} if it is $\alpha$-invariant, which means that $\alpha(I)\subset I$, and
the endomorphism $\alpha|_{I}$ is extendible, such that
$$\alpha(u_{\lambda})\rightarrow \overline{\psi}(\overline{\alpha}(1_{\M(A)}))$$
strictly in $\M(I)$, where $\{u_{\lambda}\}$ is an approximate unit in $I$.

In addition, if $(A,P,\alpha)$ is a dynamical system and $I$ is an ideal of $A$, then there is a dynamical system
$(A/I,P,\tilde{\alpha})$ with extendible endomorphisms given by $\tilde{\alpha}_{s}(a+I)=\alpha_{s}(a)+I$ for
every $a\in A$ and $s\in P$ (see again \cite{Ad-TH}).

The following theorem is actually a generalization of \cite[Theorem 3.1]{AZ2}:

\begin{theorem}
\label{piso-ext-seq}
Let $(A\times_{\alpha}^{\piso} P,i_{A},V)$ be the partial-isometric crossed product of a dynamical system $(A,P,\alpha)$,
and $I$ an extendible $\alpha_{x}$-invariant ideal of $A$ for every $x\in P$. Then, there is a short exact sequence
\begin{align}
\label{exseq2}
0 \longrightarrow I\times_{\alpha}^{\piso} P \stackrel{\mu}{\longrightarrow} A\times_{\alpha}^{\piso} P
\stackrel{\varphi}{\longrightarrow} A/I\times_{\tilde{\alpha}}^{\piso} P \longrightarrow 0
\end{align}
of $C^*$-algebras, where $\mu$ is an isomorphism of $I\times_{\alpha}^{\piso} P$ onto the ideal
$$\mathcal{E}:=\clsp\{V_{s}^{*} i_{A}(i) V_{t} : i\in I, s,t \in P\}$$
\end{theorem}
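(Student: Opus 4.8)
The plan is to build the three maps $\mu$, $\varphi$ and verify exactness at each spot, modelling the argument on the proof of \cite[Theorem 3.1]{AZ2}. First I would construct $\varphi$. The quotient map $q:A\to A/I$ is a nondegenerate homomorphism, so the pair $(i_{A/I}\circ q, V)$ — where $(A/I\times_{\tilde\alpha}^{\piso}P, i_{A/I}, \tilde V)$ is the crossed product of the quotient system — should be shown to be a covariant partial-isometric representation of $(A,P,\alpha)$ inside $A/I\times_{\tilde\alpha}^{\piso}P$; the covariance equations (\ref{cov1}) for this pair follow from $\tilde\alpha_x\circ q = q\circ\alpha_x$ together with the covariance of $(i_{A/I},\tilde V)$, and Nica covariance of $\tilde V$ is already known. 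By the universal property (Definition \ref{NT-CP-df}(ii)) this yields a nondegenerate homomorphism $\varphi := (i_{A/I}\circ q)\times \tilde V : A\times_\alpha^{\piso}P \to A/I\times_{\tilde\alpha}^{\piso}P$ with $\varphi\circ i_A = i_{A/I}\circ q$ and $\overline\varphi\circ V = \tilde V$. Surjectivity of $\varphi$ is immediate from the spanning description (\ref{span-B}) of the target, since $q$ is surjective and $\varphi$ carries $V_s^* i_A(a) V_t$ to $\tilde V_s^* i_{A/I}(q(a)) \tilde V_t$.

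Next I would handle $\mu$ and the ideal $\mathcal E$. The extendible $\alpha_x$-invariance of $I$ is exactly the hypothesis needed (cf.\ the definition recalled from \cite{Ad-TH}) to make $(I,P,\alpha|_I)$ a dynamical system in its own right, so $I\times_\alpha^{\piso}P$ exists by Proposition \ref{CP-exists}, with generators $j_I:I\to I\times_\alpha^{\piso}P$ and $W:P\to\M(I\times_\alpha^{\piso}P)$. To get $\mu$ I would show that $\big(i_A|_I\,\text{(corrected to land in }\mathcal E\text{)}, V\big)$ is a covariant partial-isometric representation of $(I,P,\alpha|_I)$: concretely, one uses the canonical nondegenerate homomorphism $\psi:A\to\M(I)$ and the extendibility condition $\alpha|_I(u_\lambda)\to\overline\psi(\overline\alpha_x(1))$ to check that $i_A$ restricted to $I$, paired with $V$, satisfies (\ref{cov1}) with the approximate unit of $I$ in place of that of $A$. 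The universal property of $I\times_\alpha^{\piso}P$ then produces a homomorphism $\mu$ into $A\times_\alpha^{\piso}P$; using the spanning form (\ref{span-B}) for $I\times_\alpha^{\piso}P$ and the computation of products in Remark \ref{rmk-1}, its range is exactly $\clsp\{V_s^* i_A(i) V_t : i\in I, s,t\in P\} = \mathcal E$, and one checks $\mathcal E$ is an ideal of $A\times_\alpha^{\piso}P$ by multiplying a spanning element of $\mathcal E$ on either side by a spanning element $V_x^* i_A(a) V_y$ of $A\times_\alpha^{\piso}P$ and invoking the product formula of Remark \ref{rmk-1}, noting $I$ is an ideal of $A$ and $\alpha_r(I)\subseteq I$.

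It then remains to prove exactness at the two interesting spots: $\mu$ is injective, and $\ker\varphi = \mathcal E$. Injectivity of $\mu$ is the routine direction — one produces a covariant partial-isometric representation of $(I,P,\alpha|_I)$ with faithful $I$-part (via Example \ref{exmp-piso-rep} applied to the system $(I,P,\alpha|_I)$, or by restricting a faithful representation of $A\times_\alpha^{\piso}P$) and factors it through $\mu$. That $\mathcal E\subseteq\ker\varphi$ is clear since $\varphi(V_s^* i_A(i) V_t) = \tilde V_s^* i_{A/I}(q(i))\tilde V_t = 0$ for $i\in I$. \textbf{The main obstacle is the reverse inclusion $\ker\varphi\subseteq\mathcal E$}, equivalently showing $\varphi$ descends to an \emph{injective} map on $(A\times_\alpha^{\piso}P)/\mathcal E$; the clean way to do this is to exhibit a homomorphism $A/I\times_{\tilde\alpha}^{\piso}P \to (A\times_\alpha^{\piso}P)/\mathcal E$ inverse to the induced map, i.e.\ to show that the compositions $a\mapsto i_A(a)+\mathcal E$ (which factors through $q$, since $i_A(I)\subseteq\mathcal E$) and $s\mapsto V_s+\mathcal E$ form a covariant partial-isometric representation of $(A/I,P,\tilde\alpha)$ in $(A\times_\alpha^{\piso}P)/\mathcal E$, whose integrated form is a two-sided inverse to $\overline\varphi$ on the quotient. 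The only genuinely delicate point here is verifying that the quotiented pair is still \emph{nondegenerate} and that $i_A(I)\subseteq\mathcal E$ — the latter because $i_A(i) = i_A(i)V_eV_e^*$ sits in $\mathcal E$ with $s=t=e$ — after which a short diagram chase closes the exact sequence.
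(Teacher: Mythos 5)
Most of your outline tracks the paper's proof (which follows \cite[Theorem 3.1]{AZ2}): the construction of $\varphi$ from the covariant pair $(i_{A/I}\circ q,U)$, the verification that $\mathcal{E}$ is an ideal via the product formula of Remark \ref{rmk-1}, the observation $i_A(I)\subseteq\mathcal{E}$ (take $s=t=e$), and the lifting argument for $\ker\varphi\subseteq\mathcal{E}$ are all essentially what the paper does. (The paper phrases that last step by choosing a nondegenerate representation $\Pi$ of $A\times_\alpha^{\piso}P$ with $\ker\Pi=\mathcal{E}$ rather than working inside the quotient algebra, but the content is the same; this step is not really the ``main obstacle'' you single out.)

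The genuine gap is in the injectivity of $\mu$, which you call ``the routine direction.'' Exhibiting one covariant pair $(\pi,T)$ of $(I,P,\alpha|_I)$ with $\pi$ faithful and factoring $\pi\times T$ through $\mu$ only yields $\ker\mu\subseteq\ker(\pi\times T)$, and the latter need not be zero: faithfulness of $\pi$ on $I$ does not imply faithfulness of $\pi\times T$ on the crossed product (this is precisely the content of Remark \ref{faithful-rep} --- one needs faithfulness on the ranges of the products $\prod_{i}(1-T_{x_i}^*T_{x_i})$, which your chosen representation is not shown to have). What is actually required is that \emph{every} covariant pair $(\pi,T)$ of $(I,P,\alpha|_I)$ factors through $\mathcal{E}$; then $\|\mu(\xi)\|\geq\sup_{(\pi,T)}\|(\pi\times T)(\xi)\|=\|\xi\|$, so $\mu$ is isometric. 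This is where the extendible $\alpha_x$-invariance of $I$ does its real work: given an arbitrary $(\pi,T)$ one forms $\rho:=\overline{\pi}\circ\psi:A\to B(H)$, where $\psi:A\to\M(I)$ is the canonical nondegenerate map, and uses $\overline{\alpha_s|_I}\circ\psi=\psi\circ\alpha_s$ (a consequence of extendible invariance) to check that $(\rho,T)$ is a covariant pair for $(A,P,\alpha)$; the restriction of $\rho\times T$ to $\mathcal{E}$ then factors $\pi\times T$. You do invoke $\psi$ and the extendibility condition, but only to verify covariance of $(i_A|_I,V)$ when building $\mu$, not in the step where they are indispensable. The paper avoids the issue altogether by proving that $(\mathcal{E},k_I,S)$ itself satisfies all three conditions of Definition \ref{NT-CP-df}, so that $\mu$ is an isomorphism by uniqueness; if you prefer to build $\mu$ first from the universal property of $I\times_\alpha^{\piso}P$, you must still carry out this universal-property verification for $\mathcal{E}$ to obtain injectivity.
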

of $A\times_{\alpha}^{\piso} P$. If $q:A\rightarrow A/I$ is the quotient map, and the triples $(I\times_{\alpha}^{\piso} P,i_{I},W)$
and $(A/I\times_{\tilde{\alpha}}^{\piso} P,i_{A/I},U)$ are the crossed products of the systems $(I,P,\alpha)$ and
$(A/I,P,\tilde{\alpha})$, respectively, then
$$\mu\circ i_{I}=i_{A}|_{I},\ \overline{\mu}\circ W=V\ \ \textrm{and}\ \ \varphi\circ i_{A}=i_{A/I}\circ q,\ \overline{\varphi}\circ V=U.$$

\begin{proof}
We first show that $\mathcal{E}$ is an ideal of $A\times_{\alpha}^{\piso} P$. To do so, it suffices to see on
the spanning elements of $\mathcal{E}$ that $V_{r}^{*}\mathcal{E}$, $i_{A}(a)\mathcal{E}$, and $V_{r}\mathcal{E}$
are all contained in $\mathcal{E}$ for every $a\in A$ and $r\in P$. This first one is obvious, and the second
one follows easily by applying the covariance equation
$i_{A}(a)V_{s}^{*}=V_{s}^{*}i_{A}(\alpha_{s}(a))$. For the third one, we have
$$V_{r}V_{s}^{*} i_{A}(i) V_{t}=V_{r}[V_{r}^{*}V_{r}V_{s}^{*}V_{s}]V_{s}^{*} i_{A}(i) V_{t},$$
which is zero if $Pr\cap Ps=\emptyset$. But if $Pr\cap Ps=Pz$ for some $z\in P$, then there are $x,y\in P$ such that
$xr=z=ys$, and therefore it follows that
\begin{eqnarray*}
\begin{array}{rcl}
V_{r}V_{s}^{*} i_{A}(i) V_{t}&=&V_{r}[V_{z}^{*}V_{z}]V_{s}^{*} i_{A}(i) V_{t}\\
&=&V_{r}V_{xr}^{*}V_{ys}V_{s}^{*} i_{A}(i) V_{t}\\
&=&V_{r}[V_{x}V_{r}]^{*}V_{y}V_{s}V_{s}^{*} i_{A}(i) V_{t}\\
&=&[V_{r}V_{r}^{*}]V_{x}^{*}V_{y}[V_{s}V_{s}^{*}] i_{A}(i) V_{t}\\
&=&\overline{i_{A}}(\overline{\alpha}_{r}(1)) V_{x}^{*}V_{y} \overline{i_{A}}(\overline{\alpha}_{s}(1)) i_{A}(i) V_{t}
\ \ \ (\textrm{by Lemma (\ref{cov2-lemma})})\\
&=&V_{x}^{*} \overline{i_{A}}(\overline{\alpha}_{x}(\overline{\alpha}_{r}(1))) V_{y} i_{A}(\overline{\alpha}_{s}(1)i) V_{t}
\ \ \ (\textrm{by Lemma (\ref{cov2-lemma})})\\
&=&V_{x}^{*} \overline{i_{A}}(\overline{\alpha}_{xr}(1)) i_{A}(\alpha_{y}(\overline{\alpha}_{s}(1)i)) V_{y}V_{t}\\
&=&V_{x}^{*} \overline{i_{A}}(\overline{\alpha}_{z}(1)) i_{A}(\overline{\alpha}_{y}(\overline{\alpha}_{s}(1))\alpha_{y}(i))V_{yt}\\
&=&V_{x}^{*} \overline{i_{A}}(\overline{\alpha}_{z}(1)) i_{A}(\overline{\alpha}_{ys}(1)\alpha_{y}(i)) V_{yt}\\
&=&V_{x}^{*} \overline{i_{A}}(\overline{\alpha}_{z}(1)) i_{A}(\overline{\alpha}_{z}(1)\alpha_{y}(i)) V_{yt}\\
&=&V_{x}^{*} i_{A}(\overline{\alpha}_{z}(1)\overline{\alpha}_{z}(1)\alpha_{y}(i)) V_{yt}\\
&=&V_{x}^{*} i_{A}(\overline{\alpha}_{z}(1)\alpha_{y}(i)) V_{yt},
\end{array}
\end{eqnarray*}
which belongs to $\mathcal{E}$. Thus, $\mathcal{E}$ is an ideal of $A\times_{\alpha}^{\piso} P$. Let
$\phi:A\times_{\alpha}^{\piso} P\rightarrow \M(\mathcal{E})$ be the canonical nondegenerate homomorphism defined by
$\phi(\xi)\eta=\xi\eta$ for all $\xi\in A\times_{\alpha}^{\piso} P$ and $\eta\in \mathcal{E}$. Suppose that now
the maps
$$k_{I}:I\rightarrow \M(\mathcal{E})\ \ \textrm{and}\ \ S:P\rightarrow \M(\mathcal{E})$$
are defined by the compositions
$$I\stackrel{i_{A}|_{I}}{\longrightarrow}A\times_{\alpha}^{\piso} P\stackrel{\phi}{\longrightarrow}\M(\mathcal{E})\ \ \textrm{and}
\ \ P\stackrel{V}{\longrightarrow}\M(A\times_{\alpha}^{\piso} P)\stackrel{\overline{\phi}}{\longrightarrow}\M(\mathcal{E}),$$
respectively. We claim that the triple $(\mathcal{E},k_{I},S)$ is a partial-isometric crossed product of the system
$(I,P,\alpha)$. First, exactly by the same discussion as in the
proof of \cite[Theorem 3.1]{AZ2} using the extendibility of the ideal $I$, it follows that the homomorphism $k_{I}$ is nondegenerate.
Also, it follows easily by the definition of the map $S$ that it is indeed a Nica partial-isometric
representation. Then, by some routine calculations, one can see that the pair $(k_{I},S)$ satisfies
the covariance equations
$$k_{I}(\alpha_{t}(i))=S_{t}k_{I}(i)S_{t}^{*}\ \ \textrm{and}\ \ S_{t}^{*}S_{t}k_{I}(i)=k_{I}(i)S_{t}^{*}S_{t}$$
for all $i\in I$ and $t\in P$.

Next, suppose that the pair $(\pi,T)$ is a covariant partial-isometric representation of $(I,P,\alpha)$ on a Hilbert space $H$.
Let $\psi:A\rightarrow \M(I)$ be the canonical nondegenerate homomorphism which was mentioned about earlier. Let
the map $\rho:A\rightarrow B(H)$ be defined by the composition
$$A\stackrel{\psi}{\longrightarrow} \M(I)\stackrel{\overline{\pi}}{\longrightarrow} B(H),$$
which is a nondegenerate representation of $A$ on $H$. We claim that the pair $(\rho,T)$ is a covariant partial-isometric
representation of $(A,P,\alpha)$ on $H$. To prove our claim, we only need to show that the pair $(\rho,T)$ satisfies
the covariance equations (\ref{cov1}). Since the ideal $I$ is extendible, we have
$\overline{\alpha_{s}|_{I}}\circ \psi=\psi\circ \alpha_{s}$ for all $s\in P$. It therefore follows that
\begin{eqnarray*}
\begin{array}{rcl}
\rho(\alpha_{s}(a))&=&(\overline{\pi}\circ\psi)(\alpha_{s}(a))\\
&=&\overline{\pi}(\psi\circ\alpha_{s}(a))\\
&=&\overline{\pi}(\overline{\alpha_{s}|_{I}}\circ \psi(a))\\
&=&(\overline{\pi}\circ\overline{\alpha_{s}|_{I}})(\psi(a))\\
&=&T_{s}\overline{\pi}(\psi(a))T_{s}^{*}=T_{s}\rho(a)T_{s}^{*}.\\
\end{array}
\end{eqnarray*}
Also, one can easily see that we have $T_{s}^{*}T_{s}\rho(a)=\rho(a)T_{s}^{*}T_{s}$. Thus, there is a nondegenerate
representation $\rho\times T$ of $A\times_{\alpha}^{\piso} P$ on $H$, whose restriction $(\rho\times T)|_{\mathcal{E}}$ is a
nondegenerate representation of $\mathcal{E}$ on $H$ satisfying
$$(\rho\times T)|_{\mathcal{E}}\circ k_{I}=\pi\ \ \textrm{and}\ \ \overline{(\rho\times T)|_{\mathcal{E}}}\circ S=T.$$

Finally, the elements of the form
\begin{eqnarray*}
\begin{array}{rcl}
S_{s}^{*}k_{I}(i)S_{t}&=&\overline{\phi}(V_{s})^{*}\overline{\phi}(i_{A}(i))\overline{\phi}(V_{t})\\
&=&\phi(V_{s}^{*}i_{A}(i)V_{t})=V_{s}^{*}i_{A}(i)V_{t}
\end{array}
\end{eqnarray*}
obviously span the algebra $\mathcal{E}$. Thus, $(\mathcal{E},k_{I},S)$ is a partial-isometric crossed product of
$(I,P,\alpha)$. So, by Proposition \ref{CP-exists}, there is an isomorphism
$\mu:I\times_{\alpha}^{\piso} P\rightarrow \mathcal{E}$ such that
$$\mu(i_{I}(i)W_{t})=k_{I}(i)S_{t}=\phi(i_{A}|_{I}(i))\overline{\phi}(V_{t})=\phi(i_{A}|_{I}(i)V_{t})=i_{A}|_{I}(i)V_{t},$$
from which, it follows that
$$\mu\circ i_{I}=i_{A}|_{I}\ \ \textrm{and}\ \overline{\mu}\circ W=V.$$

To get the desired homomorphism $\varphi$, let the homomorphism $j_{A}:A\rightarrow A/I\times_{\tilde{\alpha}}^{\piso} P$ be given
by the composition
$$A\stackrel{q}{\longrightarrow} A/I \stackrel{i_{A/I}}{\longrightarrow} A/I\times_{\tilde{\alpha}}^{\piso} P,$$
which is nondegenerate. Then, it is not difficult to see that the pair $(j_{A},U)$ is a covariant partial-isometric
representation of $(A,P,\alpha)$ in the algebra $A/I\times_{\tilde{\alpha}}^{\piso} P$. Thus, there is a nondegenerate
homomorphism $\varphi:=j_{A}\times U:A\times_{\alpha}^{\piso} P\rightarrow A/I\times_{\tilde{\alpha}}^{\piso} P$ such that
$$\varphi\circ i_{A}=j_{A}=i_{A/I}\circ q\ \ \textrm{and}\ \overline{\varphi}\circ V=U,$$
which implies that $\varphi$ is onto.

Finally, we show that $\mu(I\times_{\alpha}^{\piso} P)=\mathcal{E}$ is equal to $\ker \varphi$ which means that
(\ref{exseq2}) is exact. The inclusion $\mathcal{E}\subset \ker \varphi$ is immediate. To see the other inclusion, take
a nondegenerate representation $\Pi$ of $A\times_{\alpha}^{\piso} P$ on a Hilbert space $H$ with $\ker \Pi=\mathcal{E}$.
Since $I\subset \ker (\Pi\circ i_{A})$, the composition $\Pi\circ i_{A}$ gives a (well-defined) nodegenerate representation
$\widetilde{\Pi}$ of $A/I$ on $H$. Also, the composition $\overline{\Pi}\circ V$ defines a Nica partial-isometric representation
$P$ on $H$, such that together with $\widetilde{\Pi}$ forms a covariant partial-isometric representation of $(A/I,P,\tilde{\alpha})$
on $H$. Then the corresponding (nondegenerate) representation $\widetilde{\Pi}\times (\overline{\Pi}\circ V)$
lifts to $\Pi$, which means that $[\widetilde{\Pi}\times (\overline{\Pi}\circ V)]\circ\varphi=\Pi$, from which the inclusion
$\ker \varphi\subset \mathcal{E}$ follows. This completes the proof.

\end{proof}

\begin{example}
\label{C*(P)}
Suppose that $S$ is a unital right LCM semigroup. See in \cite{BNL,Norling} that associated to $S$ there is a universal
$C^{*}$-algebra
$$C^{*}(S)=\clsp\{W_{s}W_{t}^{*}: s,t\in S\}$$
generated by a universal isometric representation $W:S\rightarrow C^{*}(S)$, which is \emph{Nica-covariant}, which means that it
satisfies
\begin{align}\label{Nica-cov-iso}
W_{r}W_{r}^{*}W_{s}W_{s}^{*}=
   \begin{cases}
      W_{t}W_{t}^{*} &\textrm{if}\empty\ \text{$rS\cap sS=tS$,}\\
      0 &\textrm{if}\empty\ \text{$rS\cap sS=\emptyset$.}\\
   \end{cases}
\end{align}
In addition, by \cite[Corollary 7.11]{NB}, $C^{*}(S)$ is isomorphic to the Nica-Toeplitz algebra $\mathcal{N}\T(X)$ of
the compactly aligned product system $X$ over $S$ with fibers $X_{s}=\C$ for all $s\in S$.
Now, consider the trivial dynamical system $(\C,P,\id)$, where $P$ is a left LCM semigroup. So, the opposite semigroup
$P^{\textrm{o}}$ is right LCM. Then, it follows by Proposition \ref{CP-exists} that there is an isomorphism
$$i_{p}(x)\in (\C\times_{\id}^{\piso} P)\mapsto W_{x}^{*}\in C^{*}(P^{\textrm{o}})$$
for all $x\in P$, where $W$ is the universal Nica-covariant isometric representation of
$P^{\textrm{o}}$ which generates $C^{*}(P^{\textrm{o}})$.

For the $C^{*}$-algebra $C^{*}(S)$ associated to any arbitrary left
cancellative semigroup $S$, readers may refer to \cite{X-Li}.

\end{example}

\begin{lemma}
\label{id-action}
For the dynamical system $(A,P,\id)$ in which $P$ is a left LCM semigroup and $\id$ denotes the trivial action, we have
$$(A\times_{\id}^{\piso} P,i) \simeq A\otimes_{\max} C^{*}(P^{\textrm{o}}).$$
The isomorphism maps each (spanning) element $i_{p}(x)^{*}i_{A}(a)i_{p}(y)$ of $A\times_{\id}^{\piso} P$ to $a\otimes W_{x}W_{y}^{*}$, where
$W$ is the universal Nica-covariant isometric representation of
$P^{\textrm{o}}$ which generates $C^{*}(P^{\textrm{o}})$.
\end{lemma}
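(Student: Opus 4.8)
The plan is to produce mutually inverse $*$-homomorphisms between $A\times_{\id}^{\piso}P$ and $A\otimes_{\max}C^{*}(P^{\textrm{o}})$ by playing the universal property of the partial-isometric crossed product (Definition \ref{NT-CP-df}, Proposition \ref{CP-exists}) against that of the maximal tensor product. Two preliminary observations make this feasible. First, the trivial action is unital, so $\overline{\id}_{x}=\id_{\M(A)}$ and $\overline{\id}_{x}(1)=1$ for every $x$. Second, since $P$ is left LCM, the opposite semigroup $P^{\textrm{o}}$ is a unital right LCM semigroup, so Example \ref{C*(P)} supplies the universal Nica-covariant isometric representation $W\colon P^{\textrm{o}}\to C^{*}(P^{\textrm{o}})$, with $C^{*}(P^{\textrm{o}})$ unital and $C^{*}(P^{\textrm{o}})=\clsp\{W_{s}W_{t}^{*}:s,t\in P\}$.

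For the forward map I would take $\pi\colon A\to A\otimes_{\max}C^{*}(P^{\textrm{o}})$, $\pi(a)=a\otimes 1$, and $V\colon P\to\M(A\otimes_{\max}C^{*}(P^{\textrm{o}}))$, $V_{x}=1\otimes W_{x}^{*}$. A short check shows $V$ is a Nica partial-isometric representation of $P$: each $V_{x}$ is a coisometry because $V_{x}V_{x}^{*}=1\otimes W_{x}^{*}W_{x}=1$; $V$ is multiplicative because $W$ is a homomorphism of $(P^{\textrm{o}},\bullet)$ and $x\bullet y=yx$, so $V_{x}V_{y}=1\otimes W_{x}^{*}W_{y}^{*}=1\otimes W_{xy}^{*}=V_{xy}$; and the identity $r\bullet P^{\textrm{o}}\cap s\bullet P^{\textrm{o}}=Pr\cap Ps$ turns the Nica covariance (\ref{Nica-cov-iso}) of $W$ into the Nica covariance (\ref{Nica-1}) for $V$. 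Since $W_{x}^{*}W_{x}=1$, the pair $(\pi,V)$ satisfies the covariance relations (\ref{cov1}) for the trivial action ($V_{x}\pi(a)V_{x}^{*}=a\otimes W_{x}^{*}W_{x}=\pi(a)=\pi(\id_{x}(a))$ and $V_{x}^{*}V_{x}\pi(a)=a\otimes W_{x}W_{x}^{*}=\pi(a)V_{x}^{*}V_{x}$), and $\pi$ is nondegenerate. Passing through a faithful nondegenerate representation of $A\otimes_{\max}C^{*}(P^{\textrm{o}})$ so as to apply Definition \ref{NT-CP-df}(ii), one obtains a nondegenerate homomorphism $\Phi=\pi\times V$ with $\Phi\circ i_{A}=\pi$ and $\overline{\Phi}\circ i_{P}=V$; hence $\Phi(i_{P}(x)^{*}i_{A}(a)i_{P}(y))=V_{x}^{*}\pi(a)V_{y}=a\otimes W_{x}W_{y}^{*}$, and by (\ref{span-B}) together with $C^{*}(P^{\textrm{o}})=\clsp\{W_{s}W_{t}^{*}\}$ this $\Phi$ is onto.

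For the inverse, I would first use Lemma \ref{cov2-lemma} applied to $(i_{A},i_{P})$: since $\overline{\id}_{x}(1)=1$ we get $i_{P}(x)i_{P}(x)^{*}=\overline{i_{A}}(\overline{\id}_{x}(1))=1$, so $x\mapsto i_{P}(x)^{*}$ is a unital isometric representation of $P^{\textrm{o}}$, which is Nica-covariant in the sense of (\ref{Nica-cov-iso}) by reading (\ref{Nica-1}) for $i_{P}$ through the same identity $r\bullet P^{\textrm{o}}\cap s\bullet P^{\textrm{o}}=Pr\cap Ps$. Example \ref{C*(P)} then yields a homomorphism $\tau\colon C^{*}(P^{\textrm{o}})\to\M(A\times_{\id}^{\piso}P)$ with $\tau(W_{x})=i_{P}(x)^{*}$. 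The remaining ingredient is the commutation relation, which I would deduce from (\ref{cov1}):
$$i_{P}(x)^{*}i_{A}(a)=i_{P}(x)^{*}i_{P}(x)i_{A}(a)i_{P}(x)^{*}=i_{A}(a)\,i_{P}(x)^{*}i_{P}(x)i_{P}(x)^{*}=i_{A}(a)i_{P}(x)^{*}.$$
Thus $i_{A}(A)$ and $\tau(C^{*}(P^{\textrm{o}}))$ have commuting ranges, so the universal property of $\otimes_{\max}$ provides a homomorphism $\Psi=i_{A}\otimes_{\max}\tau$ with $\Psi(a\otimes W_{x}W_{y}^{*})=i_{A}(a)i_{P}(x)^{*}i_{P}(y)=i_{P}(x)^{*}i_{A}(a)i_{P}(y)$, which lands inside $A\times_{\id}^{\piso}P$ by (\ref{span-B}) and exhausts it. Since $\Phi$ and $\Psi$ act as the identity on the respective total sets $\{i_{P}(x)^{*}i_{A}(a)i_{P}(y)\}$ and $\{a\otimes W_{x}W_{y}^{*}\}$, they are mutually inverse, proving the lemma with the stated formula.

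The step I expect to be the crux is the opening observation that with a unital (here trivial) action the generating partial isometries $i_{P}(x)$ are coisometries: this is exactly what lets $C^{*}(P^{\textrm{o}})$ — built from \emph{isometries} of $P^{\textrm{o}}$ — enter, and what makes the homomorphism $\tau$ exist. Combined with the commutation $i_{A}(a)i_{P}(x)^{*}=i_{P}(x)^{*}i_{A}(a)$, it is the whole engine of the factorization. After that, the only delicate bookkeeping is keeping the two incarnations of Nica covariance aligned — partial isometries indexed by $P$ versus isometries indexed by $P^{\textrm{o}}$, mediated by $r\bullet P^{\textrm{o}}\cap s\bullet P^{\textrm{o}}=Pr\cap Ps$ — and being careful about which homomorphisms take values in an algebra and which in its multiplier algebra when the universal properties are invoked; the rest is routine.
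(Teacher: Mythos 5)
Your argument is correct, and it supplies precisely the ``routine calculations'' that the paper omits for this lemma: the two key points --- that the trivial (hence unital) action forces $i_{P}(x)i_{P}(x)^{*}=\overline{i_{A}}(\overline{\id}_{x}(1))=1$, so that $x\mapsto i_{P}(x)^{*}$ is a Nica-covariant \emph{isometric} representation of $P^{\textrm{o}}$, and that the covariance relations make $i_{A}(A)$ commute with $i_{P}(x)^{*}$ --- are exactly what is needed, and the translation between the two Nica conditions via $x\bullet P^{\textrm{o}}=Px$ is handled correctly. Since the paper gives no proof to compare against, there is nothing further to reconcile; your universal-property-in-both-directions construction is the natural one.
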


\begin{proof}
We skip the proof as it follows easily by some routine calculations.
\end{proof}

\section{Tensor products of crossed products}
\label{sec:tensor}
Let $(A,P,\alpha)$ and $(B,S,\beta)$ be dynamical systems in which $P$ and $S$ are left LCM semigroups. Then, $P\times S$ is a unital
semigroup with the unit element $(e_{P},e_{S})$, where $e_{P}$ and $e_{S}$ are the unit elements of $P$ and $S$, respectively.
In addition, since
\begin{eqnarray}
\label{eq17}
\begin{array}{rcl}
(P\times S) (x,r)\cap (P\times S) (y,s)&=&(Px\times Sr) \cap (Py\times Ss)\\
&=&(Px\cap Py) \times (Sr\cap Ss),\\
\end{array}
\end{eqnarray}
it follows that $P\times S$ is a left LCM semigroup. More precisely, if $Px\cap Py=Pz$ and $Sr\cap Ss=St$ for some $z\in P$ and
$t\in S$, then it follows by (\ref{eq17}) that
$$(P\times S) (x,r)\cap (P\times S) (y,s)=Pz \times St=(P\times S)(z,t),$$
which means that $(z,t)$ is a least common left multiple of $(x,r)$ and $(y,s)$ in $P\times S$. Otherwise,
$(P\times S) (x,r)\cap (P\times S) (y,s)=\emptyset$. Thus, $P\times S$ is actually a left LCM semigroup
(note that, the similar fact holds if $P$ and $S$ are right LCM semigroups).

Next, for every $x\in P$ and $r\in S$, as $\alpha_{x}$ and $\beta_{r}$ are endomorphisms of the algebras $A$ and $B$, respectively,
it follows by \cite[Lemma B. 31]{RW} that there is an endomorphism $\alpha_{x}\otimes\beta_{r}$ of the maximal tensor product
$A\otimes_{\max} B$ such that $(\alpha_{x}\otimes\beta_{r})(a\otimes b)=\alpha_{x}(a)\otimes\beta_{r}(b)$ for all $a\in A$ and
$b\in B$. We therefore have an action $$\alpha\otimes \beta:P\times S\rightarrow \End(A\otimes_{\max} B)$$
of $P\times S$ on $A\otimes_{\max} B$ by endomorphisms such that
$$(\alpha\otimes \beta)_{(x,r)}=\alpha_{x}\otimes\beta_{r}\ \ \textrm{for all}\ \ (x,r)\in P\times S.$$
Moreover, it follows by the extendibility of the actions $\alpha$ and $\beta$ that the action $\alpha\otimes \beta$
on $A\otimes_{\max} B$ is actually given by extendible endomorphisms (see \cite[Lemma 2.3]{Larsen}). Thus, we have a
dynamical system $(A\otimes_{\max} B,P\times S,\alpha\otimes \beta)$, for which, we want to study the corresponding
partial-isometric crossed product. We actually aim to show that under some certain conditions we have the following isomorphism:
$$(A\otimes_{\textrm{max}} B)\times_{\alpha\otimes \beta}^{\piso}(P\times S)
\simeq (A\times_{\alpha}^{\piso} P) \otimes_{\textrm{max}} (B\times_{\beta}^{\piso} S).$$
In fact, those conditions are to ensure that the Nica partial-isometric representations of $P$ and $S$
are $*$-commuting. Hence, we first need to assume that the unital semigroups $P$, $P^{\textrm{o}}$, $S$, and
$S^{\textrm{o}}$ are all left LCM. It thus turns out that all of them must actually be both left and right LCM semigroups. The other
condition comes from the following definition:
\begin{definition}
\label{bicov-def}
Suppose that $P$ and $P^{\textrm{o}}$ are both left LCM semigroups. A \emph{bicovariant partial-isometric representation} of
$P$ on a Hilbert space $H$ is a Nica partial-isometric representation $V:P\rightarrow B(H)$ which satisfies
\begin{align}\label{Nica-2}
V_{r}V_{r}^{*}V_{s}V_{s}^{*}=
   \begin{cases}
      V_{t}V_{t}^{*} &\textrm{if}\empty\ \text{$rP\cap sP=tP$,}\\
      0 &\textrm{if}\empty\ \text{$rP\cap sP=\emptyset$.}\\
   \end{cases}
\end{align}
Note the equation (\ref{Nica-2}) is a kind of Nica covariance condition, too. So, to distinguish it from the covariance equation (\ref{Nica-1}),
we view (\ref{Nica-1}) as the \emph{right Nica covariance condition} and (\ref{Nica-2}) as the \emph{left Nica covariance condition}.
\end{definition}
Note that similar to (\ref{Nica-1}), we can see that the equation (\ref{Nica-2}) is also well-defined.

\begin{lemma}
\label{bicov-lem}
Suppose that the unital semigroups $P$, $P^{\textrm{o}}$, $S$, and $S^{\textrm{o}}$ are all left LCM. Let $V$ and $W$ be
bicovariant partial-isometric representations of $P$ and $S$ on a Hilbert space $H$, respectively, such that each $V_{p}$ $*$-commutes with
each $W_{s}$ for all $p\in P$ and $s\in S$. Then, there exits a bicovariant partial-isometric representation $U$ of
$P\times S$ on $H$ such that $U_{(p,s)}=V_{p}W_{s}$. Moreover, every bicovariant partial-isometric representation of $P\times S$
arises this way.
\end{lemma}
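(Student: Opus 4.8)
The plan is to transfer everything to the initial and final projections of the partial isometries involved, using the product formula (\ref{eq17}).

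\emph{First assertion.} Given $*$-commuting bicovariant representations $V$ and $W$, I would set $U_{(p,s)}:=V_{p}W_{s}$. Since $V_{p}W_{s}=W_{s}V_{p}$, the map $U$ is a unital semigroup homomorphism of $P\times S$ into $B(H)$: $U_{(p,s)}U_{(q,t)}=V_{p}W_{s}V_{q}W_{t}=V_{p}V_{q}W_{s}W_{t}=V_{pq}W_{st}$ and $U_{(e_{P},e_{S})}=1$. That each $U_{(p,s)}$ is a partial isometry is a direct computation: by $*$-commutation $W_{s}W_{s}^{*}$ commutes with $V_{p}$ and $V_{p}^{*}$, and $V_{p}^{*}V_{p}$ commutes with $W_{s}$ and $W_{s}^{*}$, so $U_{(p,s)}U_{(p,s)}^{*}U_{(p,s)}=V_{p}(W_{s}W_{s}^{*})V_{p}^{*}(V_{p}W_{s})=V_{p}V_{p}^{*}V_{p}\,W_{s}W_{s}^{*}W_{s}=V_{p}W_{s}$. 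For the covariance conditions, $*$-commutation gives $U_{(x,r)}^{*}U_{(x,r)}=V_{x}^{*}V_{x}\,W_{r}^{*}W_{r}$ with commuting factors, hence $U_{(x,r)}^{*}U_{(x,r)}U_{(y,t)}^{*}U_{(y,t)}=(V_{x}^{*}V_{x}V_{y}^{*}V_{y})(W_{r}^{*}W_{r}W_{t}^{*}W_{t})$; by (\ref{Nica-1}) for $V$ and for $W$ together with (\ref{eq17}) this equals $V_{z}^{*}V_{z}W_{w}^{*}W_{w}=U_{(z,w)}^{*}U_{(z,w)}$ when $Px\cap Py=Pz$ and $Sr\cap St=Sw$, in which case $(P\times S)(x,r)\cap(P\times S)(y,t)=(P\times S)(z,w)$, and equals $0$ when either intersection is empty. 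The left Nica covariance condition (\ref{Nica-2}) follows in the same way from the right-ideal analogue of (\ref{eq17}) and (\ref{Nica-2}) for $V$ and $W$, now using $U_{(x,r)}U_{(x,r)}^{*}=V_{x}V_{x}^{*}\,W_{r}W_{r}^{*}$.

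\emph{Converse.} Given a bicovariant partial-isometric representation $U$ of $P\times S$ on $H$, I would put $V_{p}:=U_{(p,e_{S})}$ and $W_{s}:=U_{(e_{P},s)}$. From $(p,e_{S})(q,e_{S})=(pq,e_{S})$ and $(e_{P},s)(e_{P},t)=(e_{P},st)$ these are partial-isometric representations of $P$ and $S$; since $(P\times S)(p,e_{S})\cap(P\times S)(q,e_{S})=(Pp\cap Pq)\times S$, and likewise for right ideals, (\ref{Nica-1}) and (\ref{Nica-2}) for $U$ restrict to the corresponding conditions for $V$ and for $W$, so $V$ and $W$ are bicovariant; and $(p,e_{S})(e_{P},s)=(p,s)=(e_{P},s)(p,e_{S})$ gives $U_{(p,s)}=V_{p}W_{s}=W_{s}V_{p}$, in particular $V_{p}$ and $W_{s}$ commute. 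It remains to show that they $*$-commute. Since $U_{(p,s)}=V_{p}W_{s}$ is a partial isometry, \cite[Lemma 2]{Halmos} gives that $V_{p}^{*}V_{p}$ commutes with $W_{s}W_{s}^{*}$, and the same lemma applied to $U_{(p,s)}=W_{s}V_{p}$ shows $W_{s}^{*}W_{s}$ commutes with $V_{p}V_{p}^{*}$. Both the left and the right least common multiples of $(p,e_{S})$ and $(e_{P},s)$ in $P\times S$ equal $(p,s)$, so Nica covariance of $U$ yields $U_{(p,s)}U_{(p,s)}^{*}=V_{p}V_{p}^{*}W_{s}W_{s}^{*}$ and $U_{(p,s)}^{*}U_{(p,s)}=V_{p}^{*}V_{p}W_{s}^{*}W_{s}$; comparing the first with $U_{(p,s)}U_{(p,s)}^{*}=V_{p}(W_{s}W_{s}^{*})V_{p}^{*}$ and right-multiplying by $V_{p}$ (using the commutations above and $V_{p}V_{p}^{*}V_{p}=V_{p}$) gives $V_{p}(W_{s}W_{s}^{*})=(W_{s}W_{s}^{*})V_{p}$, and the analogous manoeuvre with initial projections gives $V_{p}(W_{s}^{*}W_{s})=(W_{s}^{*}W_{s})V_{p}$. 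Finally, writing $E=W_{s}W_{s}^{*}$, from $V_{p}W_{s}=W_{s}V_{p}$ one gets $W_{s}^{*}V_{p}W_{s}=W_{s}^{*}W_{s}V_{p}=V_{p}W_{s}^{*}W_{s}$, so $W_{s}^{*}V_{p}-V_{p}W_{s}^{*}$ vanishes on $\range W_{s}=EH$; since $W_{s}^{*}(1-E)=0$ and $V_{p}$ commutes with $E$, it also vanishes on $(1-E)H$, whence $W_{s}^{*}V_{p}=V_{p}W_{s}^{*}$. Thus $V_{p}$ and $W_{s}$ $*$-commute and $U_{(p,s)}=V_{p}W_{s}$.

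The main obstacle is exactly this last $*$-commutation step in the converse: commutation of $V_{p}$ with $W_{s}$ comes for free from the product structure of $P\times S$, but promoting it to commutation with $W_{s}^{*}$ forces one to combine the Nica covariance identities for $U$ (applied to the pair $(p,e_{S}),(e_{P},s)$, whose left and right least common multiples both collapse to $(p,s)$) with the Halmos commutation lemma for products of partial isometries. Everything else is bookkeeping with projections and with the decomposition (\ref{eq17}) of the left and right ideals of $P\times S$.
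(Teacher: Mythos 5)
Your proof is correct and follows essentially the same route as the paper's: the forward direction is the same factorization of initial and final projections of $U_{(p,s)}=V_{p}W_{s}$ via (\ref{eq17}), and the converse hinges, exactly as in the paper, on the Halmos commutation lemma for products of partial isometries together with the observation that $(p,s)$ is both the left and the right least common multiple of $(p,e_{S})$ and $(e_{P},s)$, fed into (\ref{Nica-1}) and (\ref{Nica-2}) for $U$. The only divergence is cosmetic: where the paper obtains $V_{p}^{*}W_{s}=W_{s}V_{p}^{*}$ by a direct algebraic chain ending in $V_{p}^{*}W_{s}=W_{s}[U_{(p,s)}^{*}U_{(p,s)}]V_{p}^{*}$, you first show $V_{p}$ commutes with both $W_{s}W_{s}^{*}$ and $W_{s}^{*}W_{s}$ and then check the commutator $W_{s}^{*}V_{p}-V_{p}W_{s}^{*}$ separately on $W_{s}W_{s}^{*}H$ and its orthogonal complement, which is a valid and equally economical finish.
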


\begin{proof}
Define a map $U:P\times S\rightarrow B(H)$ by $$U_{(p,s)}=V_{p}W_{s}$$ for all $(p,s)\in P\times S$.
Since each $V_{p}$ $*$-commutes with each $W_{s}$, it follows that each $U_{(p,s)}$ is a partial isometry, as
\begin{eqnarray*}
\begin{array}{rcl}
U_{(p,s)}U_{(p,s)}^{*}U_{(p,s)}&=&V_{p}W_{s}[V_{p}W_{s}]^{*}V_{p}W_{s}\\
&=&V_{p}W_{s}[W_{s}V_{p}]^{*}V_{p}W_{s}\\
&=&V_{p}W_{s}V_{p}^{*}W_{s}^{*}V_{p}W_{s}\\
&=&V_{p}V_{p}^{*}W_{s}V_{p}W_{s}^{*}W_{s}\\
&=&V_{p}V_{p}^{*}V_{p}W_{s}W_{s}^{*}W_{s}=V_{p}W_{s}=U_{(p,s)}.
\end{array}
\end{eqnarray*}
Also, a simple computation shows that $$U_{(p,s)}U_{(q,t)}=U_{(p,s)(q,t)}$$ for every $(p,s)$ and $(q,t)$ in $P\times S$.
Thus, the map $U$ is a (unital) semigroup homomorphism (with partial-isometric values). Next, we want to show that
it satisfies the Nica covariance conditions (\ref{Nica-1}) and (\ref{Nica-2}), and hence, it is bicovariant. To see (\ref{Nica-1}),
we first have
\begin{eqnarray}
\label{eq18}
\begin{array}{rcl}
U_{(p,s)}^{*}U_{(p,s)}U_{(q,t)}^{*}U_{(q,t)}&=&[V_{p}W_{s}]^{*}V_{p}W_{s}[V_{q}W_{t}]^{*}V_{q}W_{t}\\
&=&[W_{s}V_{p}]^{*}V_{p}W_{s}[W_{t}V_{q}]^{*}V_{q}W_{t}\\
&=&V_{p}^{*}W_{s}^{*}V_{p}W_{s}V_{q}^{*}W_{t}^{*}V_{q}W_{t}\\
&=&V_{p}^{*}V_{p}W_{s}^{*}V_{q}^{*}W_{s}V_{q}W_{t}^{*}W_{t}\\
&=&V_{p}^{*}V_{p}V_{q}^{*}W_{s}^{*}V_{q}W_{s}W_{t}^{*}W_{t}\\
&=&[V_{p}^{*}V_{p}V_{q}^{*}V_{q}] [W_{s}^{*}W_{s}W_{t}^{*}W_{t}].
\end{array}
\end{eqnarray}
If $$(P\times S) (p,s)\cap (P\times S) (q,t)=(P\times S)(z,r)=Pz\times Sr$$ for some $(z,r)\in P\times S$, then it follows
by (\ref{eq17}) (for the left hand side in above) that
$$(Pp\cap Pq) \times (Ss\cap St)=Pz\times Sr.$$
Thus, we must have $$Pp\cap Pq=Pz\ \ \textrm{and}\ \ Ss\cap St=Sr,$$ and hence, for (\ref{eq18}), we get
\begin{eqnarray*}
\begin{array}{rcl}
U_{(p,s)}^{*}U_{(p,s)}U_{(q,t)}^{*}U_{(q,t)}&=&[V_{p}^{*}V_{p}V_{q}^{*}V_{q}] [W_{s}^{*}W_{s}W_{t}^{*}W_{t}]\\
&=&V_{z}^{*}V_{z} W_{r}^{*}W_{r}\\
&=&V_{z}^{*}W_{r}^{*} V_{z} W_{r}\\
&=&[W_{r}V_{z}]^{*} U_{(z,r)}\\
&=&[V_{z}W_{r}]^{*} U_{(z,r)}=U_{(z,r)}^{*} U_{(z,r)}.
\end{array}
\end{eqnarray*}
If $(P\times S) (p,s)\cap (P\times S) (q,t)=\emptyset$, then again, by (\ref{eq17}), we get
$$(Pp\cap Pq) \times (Ss\cap St)=\emptyset.$$ It follows that
$$Pp\cap Pq=\emptyset\ \ \vee\ \ Ss\cap St=\emptyset,$$ which implies that,
$$V_{p}^{*}V_{p}V_{q}^{*}V_{q}=0\ \ \vee\ \ W_{s}^{*}W_{s}W_{t}^{*}W_{t}=0.$$
Thus, for (\ref{eq18}), we have
$$U_{(p,s)}^{*}U_{(p,s)}U_{(q,t)}^{*}U_{(q,t)}=[V_{p}^{*}V_{p}V_{q}^{*}V_{q}] [W_{s}^{*}W_{s}W_{t}^{*}W_{t}]=0.$$

A similar discussion shows that the representation $U$ satisfies the Nica covariance condition (\ref{Nica-2}), too, namely, we have
\[
U_{(p,r)}U_{(p,r)}^{*}U_{(q,s)}U_{(q,s)}^{*}=
   \begin{cases}
      U_{(x,t)}U_{(x,t)}^{*} &\textrm{if}\empty\ \text{$(p,r)(P\times S) \cap (q,s)(P\times S) =(x,t)(P\times S)$,}\\
      0 &\textrm{if}\empty\ \text{$(p,r)(P\times S) \cap (q,s)(P\times S)=\emptyset$.}\\
   \end{cases}
\]
Therefore, $U$ is a bicovariant partial-isometric representation of $P\times S$ on $H$ satisfying $U_{(p,s)}=V_{p}W_{s}$.

Conversely, suppose that $U$ is any bicovariant partial-isometric representation of $P\times S$ on a Hilbert space $H$.
Define the maps
$$V:P\rightarrow B(H)\ \ \textrm{and}\ \ W:S\rightarrow B(H)$$
by
$$V_{p}:=U_{(p,e_{S})}\ \ \textrm{and}\ \ W_s:=U_{(e_{P},s)}$$
for all $p\in P$ and $s\in S$, respectively. It is easy to see that each $V_{p}$ is a partial isometry as well as each $W_s$, and
the maps $V$ and $W$ are (unital) semigroup homomorphisms. Next, we show that the presentation $V$ is bicovariant, and we skip the
proof for the presentation $W$ as it follows similarly. To see that the presentation $V$ satisfies the Nica covariance
condition (\ref{Nica-1}), firstly,
\begin{align}
\label{eq19}
V_{p}^{*}V_{p}V_{q}^{*}V_{q}=U_{(p,e_{S})}^{*}U_{(p,e_{S})}U_{(q,e_{S})}^{*}U_{(q,e_{S})}.
\end{align}
Now, if $Pp\cap Pq=Pz$ for some $z\in P$, then it follows by (\ref{eq17}) that
\begin{eqnarray*}
\begin{array}{rcl}
(P\times S) (p,e_{S})\cap (P\times S) (q,e_{S})&=&(Pp\cap Pq) \times (S\cap S)\\
&=&Pz\times S\\
&=&Pz\times Se_{S}=(P\times S)(z,e_{S}).
\end{array}
\end{eqnarray*}
Therefore, since $U$ is bicovariant, for (\ref{eq19}), we have
\begin{eqnarray*}
\begin{array}{rcl}
V_{p}^{*}V_{p}V_{q}^{*}V_{q}&=&U_{(p,e_{S})}^{*}U_{(p,e_{S})}U_{(q,e_{S})}^{*}U_{(q,e_{S})}\\
&=&U_{(z,e_{S})}^{*}U_{(z,e_{S})}=V_{z}^{*}V_{z}.
\end{array}
\end{eqnarray*}
If $Pp\cap Pq=\emptyset$, then it follows again by (\ref{eq17}) that
$$(P\times S) (p,e_{S})\cap (P\times S) (q,e_{S})=(Pp\cap Pq) \times (S\cap S)=\emptyset \times S=\emptyset.$$
Therefore, for (\ref{eq19}), we get
$$V_{p}^{*}V_{p}V_{q}^{*}V_{q}=U_{(p,e_{S})}^{*}U_{(p,e_{S})}U_{(q,e_{S})}^{*}U_{(q,e_{S})}=0,$$
as $U$ is bicovariant. A similar discussion shows that the representation $V$ satisfies
the Nica covariance condition (\ref{Nica-2}), too. We skip it here. Finally, as we obviously have
\begin{align}
\label{eq20}
V_{p}W_{s}=W_{s}V_{p}=U_{(p,s)},
\end{align}
it is only left to show that $V_{p}^{*}W_{s}=W_{s}V_{p}^{*}$ for all $p\in P$ and $s\in S$. To do so, we first need to recall
that the product $vw$ of two partial isometries $v$ and $w$ is a partial isometry if and only if $v^{*}v$ commutes with
$ww^{*}$ (see \cite[Lemma 2]{Halmos}). This fact can be applied to the partial isometries $V_{p}$ and $W_{s}$ due to (\ref{eq20}).
Now, we have
\begin{eqnarray}
\label{eq21}
\begin{array}{rcl}
V_{p}^{*}W_{s}&=&V_{p}^{*}[V_{p}V_{p}^{*}W_{s}W_{s}^{*}]W_{s}\\
&=&U_{(p,e_{S})}^{*}[U_{(p,e_{S})}U_{(p,e_{S})}^{*}U_{(e_{P},s)}U_{(e_{P},s)}^{*}]U_{(e_{P},s)}.
\end{array}
\end{eqnarray}
Since,
\begin{eqnarray*}
\begin{array}{rcl}
(p,e_{S})(P\times S)\cap (e_{P},s)(P\times S)&=&(pP\times S)\cap (P\times sS)\\
&=&(pP\cap P)\times (S\cap sS)\\
&=&pP\times sS=(p,s)(P\times S),
\end{array}
\end{eqnarray*}
it follows that
\begin{eqnarray}
\label{eq22}
\begin{array}{rcl}
V_{p}V_{p}^{*}W_{s}W_{s}^{*}&=&U_{(p,e_{S})}U_{(p,e_{S})}^{*}U_{(e_{P},s)}U_{(e_{P},s)}^{*}\\
&=&U_{(p,s)}U_{(p,s)}^{*}\\
&=&U_{(p,e_{S})}U_{(e_{P},s)}[U_{(e_{P},s)}U_{(p,e_{S})}]^{*}\\
&=&U_{(p,e_{S})} U_{(e_{P},s)} U_{(p,e_{S})}^{*} U_{(e_{P},s)}^{*}=V_{p} W_{s} V_{p}^{*} W_{s}^{*},
\end{array}
\end{eqnarray}
as $U$ is bicovariant. Therefore, by (\ref{eq21}) and (\ref{eq22}), we get
\begin{eqnarray*}
\begin{array}{rcl}
V_{p}^{*}W_{s}&=&V_{p}^{*}[V_{p}V_{p}^{*}W_{s}W_{s}^{*}]W_{s}\\
&=&V_{p}^{*} [V_{p} W_{s} V_{p}^{*} W_{s}^{*}] W_{s}\\
&=&[V_{p}^{*} V_{p} W_{s}W_{s}^{*}] W_{s} V_{p}^{*} [V_{p}V_{p}^{*} W_{s}^{*} W_{s}]\\
&=&[W_{s}W_{s}^{*} V_{p}^{*}V_{p}] W_{s} V_{p}^{*} [W_{s}^{*} W_{s} V_{p}V_{p}^{*}]\\
&=&W_{s}W_{s}^{*} V_{p}^{*} [V_{p} W_{s} V_{p}^{*} W_{s}^{*}] W_{s} V_{p}V_{p}^{*}]\\
&=&W_{s}W_{s}^{*} V_{p}^{*} [V_{p}V_{p}^{*}W_{s}W_{s}^{*}] W_{s} V_{p}V_{p}^{*}]\ \ \ (\textrm{by (\ref{eq22})})\\
&=&W_{s}W_{s}^{*} V_{p}^{*} W_{s} V_{p}V_{p}^{*}\\
&=&W_{s} [(V_{p} W_{s})^{*} (V_{p} W_{s})] V_{p}^{*}\\
&=&W_{s} [U_{(p,s)}^{*} U_{(p,s)}] V_{p}^{*},
\end{array}
\end{eqnarray*}
and hence,
\begin{align}
\label{eq23}
V_{p}^{*}W_{s}=W_{s} [U_{(p,s)}^{*} U_{(p,s)}] V_{p}^{*}.
\end{align}
Moreover, since similarly
$$(P\times S)(e_{P},s)\cap (P\times S)(p,e_{S})=(P\times S)(p,s),$$
we have
$$U_{(e_{P},s)}^{*} U_{(e_{P},s)}U_{(p,e_{S})}^{*} U_{(p,e_{S})}=U_{(p,s)}^{*} U_{(p,s)},$$
as $U$ is bicovariant. By applying this to (\ref{eq23}), we finally get
\begin{eqnarray*}
\begin{array}{rcl}
V_{p}^{*}W_{s}&=&W_{s} [U_{(e_{P},s)}^{*} U_{(e_{P},s)}U_{(p,e_{S})}^{*} U_{(p,e_{S})}] V_{p}^{*}\\
&=&[W_{s} W_{s}^{*} W_{s}] [V_{p}^{*} V_{p} V_{p}^{*}]\\
&=&W_{s}V_{p}^{*}.
\end{array}
\end{eqnarray*}
This completes the proof.
\end{proof}

\begin{remark}
\label{bicov-piso-TOG}
If $P$ is the positive cone of a totally ordered group $G$, then every partial-isometric representation $V$ of $P$ automatically
satisfies the left Nica covariance condition (\ref{Nica-2}), such that
$$V_{x}V_{x}^{*}V_{y}V_{y}^{*}=V_{\max\{x,y\}}V_{\max\{x,y\}}^{*}\ \ \ \textrm{for all}\ x,y\in P.$$
Therefore, every partial-isometric representation of $P$ is automatically bicovariant (see Remark \ref{Nica-piso-TOG}).
\end{remark}

\begin{remark}
\label{bicov-N2}
Recall that a partial isometry $V$ is called a \emph{power partial isometry} if $V^{n}$ is a partial isometry for every $n\in \N$.
One can see that every power partial isometry $V$ generates a partial-isometric representation of $\N$ such that $V_{n}:=V^{n}$
for every $n\in \N$, and every partial-isometric representation $V$ of $\N$ arises this way, which means that it is actually
generated by the power partial isometry $V_{1}$. Now, it follows by Lemma \ref{bicov-lem} that $U$ is a bicovariant
partial-isometric representation of $\N^{2}$ if and only if there are $*$-commuting power partial isometries $V$ and $W$ such that
$U_{(m,n)}=V^{m}W^{n}$ for every $(m,n)\in \N^{2}$.
\end{remark}

\begin{lemma}
\label{L-Nica-FG}
Consider the left quasi-lattice ordered group $(\F_{n},\F_{n}^{+})$ (see Example \ref{free-G}).
A partial-isometric representation $V$ of $\F_{n}^{+}$ satisfies the left Nica covariance condition (\ref{Nica-2}) if and only if the initial projections $V_{a_{i}}V_{a_{i}}^{*}$ and $V_{a_{j}}V_{a_{j}}^{*}$ have orthogonal ranges, where
$1\leq i,j\leq n$ such that $i\neq j$.
\end{lemma}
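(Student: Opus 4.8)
The plan is to follow the proof of Lemma~\ref{Nica-piso-FG} line for line, but with the left-invariant order $\leq_{\lt}$ and the right ideals $x\F_n^+$ in place of the right-invariant order and the left ideals $Px$; recall that $x\leq_{\lt}y$ means exactly that $x$ is an initial string on the left of $y$, i.e.\ $y\in x\F_n^+$, and that $x\F_n^+\cap y\F_n^+$ is $z\F_n^+$ or $\emptyset$ since $\F_n^+$ is also right LCM (Example~\ref{free-G}). For the forward implication, suppose $V$ satisfies (\ref{Nica-2}). For $1\leq i,j\leq n$ with $i\neq j$ the generators $a_i$ and $a_j$ have different first letters, so no string can begin with both; hence $a_i\F_n^+\cap a_j\F_n^+=\emptyset$, and (\ref{Nica-2}) forces $V_{a_i}V_{a_i}^*V_{a_j}V_{a_j}^*=0$, i.e.\ $V_{a_i}V_{a_i}^*$ and $V_{a_j}V_{a_j}^*$ have orthogonal ranges.

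For the converse, assume $V_{a_i}V_{a_i}^*V_{a_j}V_{a_j}^*=0$ whenever $i\neq j$, and fix $r,s\in\F_n^+$. If $r\F_n^+\cap s\F_n^+\neq\emptyset$, then one of $r,s$ is an initial string on the left of the other; say $s=rw$ with $w\in\F_n^+$, so that $s\F_n^+\subseteq r\F_n^+$ and $r\F_n^+\cap s\F_n^+=s\F_n^+$, and then, using $V_rV_r^*V_r=V_r$,
\[
V_rV_r^*V_sV_s^*=V_rV_r^*(V_rV_wV_w^*V_r^*)=V_rV_wV_w^*V_r^*=V_sV_s^*,
\]
as required (the case $r=sw$ is symmetric, and this uses nothing about the generators). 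If $r\F_n^+\cap s\F_n^+=\emptyset$, then neither $r$ nor $s$ is an initial string on the left of the other; letting $z\in\F_n^+$ be their longest common initial segment, we may write $r=za_is'$ and $s=za_jt'$ with $s',t'\in\F_n^+$ and $i\neq j$. Expanding via $V_r=V_zV_{a_i}V_{s'}$ and $V_s=V_zV_{a_j}V_{t'}$,
\[
V_rV_r^*V_sV_s^*=V_zV_{a_i}V_{s'}V_{s'}^*V_{a_i}^*\,\big(V_z^*V_z\big)\,V_{a_j}V_{t'}V_{t'}^*V_{a_j}^*V_z^*,
\]
so it suffices to show the middle block $V_{a_i}^*(V_z^*V_z)V_{a_j}$ vanishes. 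Since $V_zV_{a_j}=V_{za_j}$ is the value of the partial-isometric representation at $za_j\in\F_n^+$, it is a partial isometry, so by \cite[Lemma~2]{Halmos} the projection $V_z^*V_z$ commutes with $V_{a_j}V_{a_j}^*$; inserting the support projections $V_{a_i}^*=V_{a_i}^*(V_{a_i}V_{a_i}^*)$ and $V_{a_j}=(V_{a_j}V_{a_j}^*)V_{a_j}$ and using this commutation,
\[
V_{a_i}^*(V_z^*V_z)V_{a_j}=V_{a_i}^*\big(V_{a_i}V_{a_i}^*V_{a_j}V_{a_j}^*\big)(V_z^*V_z)V_{a_j}=0
\]
by hypothesis. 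Hence $V_rV_r^*V_sV_s^*=0$, and (\ref{Nica-2}) holds in all cases.

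The only nonroutine points are the elementary string bookkeeping in the disjoint case (extracting the common prefix $z$ and the first position of disagreement) and recognising that the Halmos commutation lemma is precisely what slides $V_z^*V_z$ past $V_{a_j}V_{a_j}^*$ so that $V_{a_i}V_{a_i}^*$ and $V_{a_j}V_{a_j}^*$ can be brought adjacent. Alternatively, the statement can be deduced from Lemma~\ref{Nica-piso-FG} by an opposite-semigroup duality: string reversal gives a semigroup isomorphism $\F_n^+\cong(\F_n^+)^{\mathrm o}$ fixing each $a_i$ and carrying the right ideals $x\F_n^+$ onto the left ideals of $(\F_n^+)^{\mathrm o}$, and under it the map $x\mapsto V_x^*$ becomes a partial-isometric representation of $\F_n^+$ whose right Nica covariance (\ref{Nica-1}) is equivalent to the left Nica covariance (\ref{Nica-2}) of $V$ and whose generator initial projections are exactly $V_{a_i}V_{a_i}^*$, so Lemma~\ref{Nica-piso-FG} applies directly.
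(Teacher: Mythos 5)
Your proposal is correct and is precisely the ``similar discussion'' the paper invokes: the paper skips this proof, referring to Lemma~\ref{Nica-piso-FG}, and your argument is the exact left/right mirror of that proof (common prefix in place of common suffix, with the Halmos commutation lemma sliding $V_z^*V_z$ past $V_{a_j}V_{a_j}^*$ playing the same role). Your closing remark that the lemma also follows from Lemma~\ref{Nica-piso-FG} by string reversal, applied to $x\mapsto V_{\rho(x)}^*$, is a valid and cleaner reduction, though not the route the paper suggests.
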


\begin{proof}
We skip the proof as it follows by a similar discussion to the proof of Lemma \ref{Nica-piso-FG}.
\end{proof}

\begin{lemma}
\label{bicov-N*}
Consider the abelian lattice-ordered group $(\Q_{+}^{\times},\N^{\times})$ (see Example \ref{lcm2}).
A partial-isometric representation $V$ of $\N^{\times}$ is bicovariant if and only if $V_{m}^{*}V_{n}=V_{n}V_{m}^{*}$
for every relatively prime pair $(m,n)$ of elements in $\N^{\times}$.
\end{lemma}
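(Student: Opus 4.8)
The plan is to reduce everything to the description of $\N^\times$ as the internal direct sum, over the primes $p$, of the subsemigroups $\langle p\rangle=\{p^{k}:k\ge 0\}\cong\N$ provided by unique factorization. Under this description a partial-isometric representation $V$ of $\N^\times$ amounts to a family $\{V_{p}\}_{p\ \text{prime}}$ of power partial isometries that commute with one another (they commute because $\N^\times$ is abelian), with $V_{p^{k}}=V_{p}^{\,k}$ and $V_{m}=\prod_{p\mid m}V_{p}^{\,v_{p}(m)}$, where $v_{p}$ is the $p$-adic valuation. First I would record the elementary fact that ``$V_{m}^{*}V_{n}=V_{n}V_{m}^{*}$ for every coprime pair $(m,n)$'' is equivalent to the special case ``$V_{p}^{*}V_{q}=V_{q}V_{p}^{*}$ for all distinct primes $p,q$'': taking $m=p$, $n=q$ gives one implication, and conversely, if $m$ and $n$ are coprime then $V_{m}^{*}$ is a product of powers of the $V_{p}^{*}$ with $p\mid m$ while $V_{n}$ is a product of powers of the $V_{q}$ with $q\mid n$, two disjoint prime sets, so $V_{m}^{*}$ and $V_{n}$ can be slid past one another factor by factor using $V_{p}^{*}V_{q}=V_{q}V_{p}^{*}$. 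Since $V_{p}V_{q}=V_{q}V_{p}$ is automatic, this condition says precisely that $V_{p}$ and $V_{q}$ are $*$-commuting (in the sense used in Lemma \ref{bicov-lem}) for all distinct primes $p,q$.

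Next I would observe that bicovariance is a ``local'' condition. For a finite set $F$ of primes, let $\langle F\rangle\subseteq\N^\times$ be the subsemigroup generated by $F$; by unique factorization $\langle F\rangle\cong\N^{|F|}$ compatibly with least common multiples, and $\langle F\rangle$ is closed under taking $\mathrm{lcm}$ inside $\N^\times$. Since for any $m,n\in\N^\times$ the elements $m$, $n$ and $\mathrm{lcm}(m,n)$ all lie in $\langle F\rangle$ for $F=\{p:p\mid mn\}$, the right and left Nica covariance identities (\ref{Nica-1}) and (\ref{Nica-2}) for the pair $(m,n)$ involve only operators coming from the restriction $V|_{\langle F\rangle}$. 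Hence $V$ is bicovariant on $\N^\times$ if and only if $V|_{\langle F\rangle}$ is bicovariant for every finite set $F$ of primes.

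It then remains to show that, for a finite set $F$ of primes, the representation $V|_{\langle F\rangle}$ of $\N^{|F|}$ is bicovariant if and only if the power partial isometries $\{V_{p}:p\in F\}$ pairwise $*$-commute. For the forward implication, restrict once more to $\langle\{p,q\}\rangle\cong\N^{2}$ for each pair of distinct primes $p,q\in F$: this restriction is again bicovariant, so by Remark \ref{bicov-N2} its generators $V_{p},V_{q}$ are $*$-commuting power partial isometries. For the converse I would induct on $|F|$. The case $|F|=1$ is Remark \ref{bicov-piso-TOG}: every partial-isometric representation of $\N$, the positive cone of the totally ordered group $\Z$, is automatically bicovariant, so every power partial isometry generates a bicovariant representation of $\N$. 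For the inductive step, pick $p_{0}\in F$, put $F'=F\setminus\{p_{0}\}$, and use the semigroup isomorphism $\langle F\rangle\cong\langle F'\rangle\times\langle p_{0}\rangle\cong\N^{|F|-1}\times\N$, under which $V|_{\langle F\rangle}$ corresponds to the representation $(m,p_{0}^{k})\mapsto V_{m}V_{p_{0}}^{\,k}$; the generator $V_{p_{0}}$ $*$-commutes with each $V_{p}$, $p\in F'$, by hypothesis, hence with every product of their powers (if a partial isometry $*$-commutes with each of several partial isometries, it $*$-commutes with every product of their powers), so it $*$-commutes with all of $V|_{\langle F'\rangle}$; the latter is bicovariant by the inductive hypothesis and $V|_{\langle p_{0}\rangle}$ is bicovariant by the case $|F|=1$, so Lemma \ref{bicov-lem} applies (its remaining hypotheses hold because $\langle F'\rangle\cong\N^{|F|-1}$ and $\langle p_{0}\rangle\cong\N$ are abelian, hence coincide with their opposites, and are left LCM by Example \ref{lcm1}) and yields that $V|_{\langle F\rangle}$ is bicovariant. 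Chaining the four equivalences ($V$ bicovariant $\Leftrightarrow$ $V|_{\langle F\rangle}$ bicovariant for all finite $F$ $\Leftrightarrow$ $\{V_{p}:p\in F\}$ pairwise $*$-commute for all finite $F$ $\Leftrightarrow$ $V_{p}^{*}V_{q}=V_{q}V_{p}^{*}$ for all distinct primes $p,q$ $\Leftrightarrow$ $V_{m}^{*}V_{n}=V_{n}V_{m}^{*}$ for all coprime $(m,n)$) gives the lemma. The only real work is bookkeeping — moving carefully between $\N^\times$ and its finite truncations $\langle F\rangle$, and verifying that $*$-commutation propagates from the prime generators to arbitrary $V_{m}$; the substantive input is entirely in Remarks \ref{bicov-N2} and \ref{bicov-piso-TOG} and Lemma \ref{bicov-lem}.
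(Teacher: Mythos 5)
Your argument is correct, and it takes a genuinely different route from the paper's. The paper proves both directions by direct operator computation: for the forward implication it manipulates $V_nV_m^*$ explicitly, using Halmos's observation that $V_n^*V_n$ commutes with $V_mV_m^*$ whenever $V_nV_m$ is a partial isometry, together with the two covariance identities at $n\vee m=nm$; for the converse it writes $x=ar$, $y=bs$ via unique factorization and telescopes the per-prime factors $V_{p_i^{n_i}}^*V_{p_i^{n_i}}V_{p_i^{m_i}}^*V_{p_i^{m_i}}=V_{p_i^{t_i}}^*V_{p_i^{t_i}}$ by hand, proving (\ref{Nica-1}) and asserting (\ref{Nica-2}) is similar. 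You instead exploit the structural fact that $\N^{\times}$ is the directed union of the subsemigroups $\langle F\rangle\cong\N^{|F|}$ compatibly with least common multiples, so that bicovariance is a local condition, and then run an induction on $|F|$ whose engine is Lemma \ref{bicov-lem} (with Remark \ref{bicov-piso-TOG} as the base case and Remark \ref{bicov-N2} for the forward direction). Your approach buys a cleaner treatment — both Nica conditions come out simultaneously from Lemma \ref{bicov-lem}, the only computations are the elementary propagation of $*$-commutation to products and powers, and the argument would generalize to any semigroup that is a restricted direct product of totally ordered positive cones — at the cost of having to verify the bookkeeping that restrictions to $\langle F\rangle$ preserve the LCM structure and that the product decomposition of Lemma \ref{bicov-lem} really recovers $V|_{\langle F\rangle}$; the paper's computation is self-contained but longer and less transparent. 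All the steps you leave as "routine" (the equivalence of coprime and prime $*$-commutation, the locality of (\ref{Nica-1})--(\ref{Nica-2}), and the propagation of $*$-commutation) do check out.
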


\begin{proof}
Suppose that $V$ is a bicovariant partial-isometric representation of $\N^{\times}$. So, we have
\begin{align}
\label{eq34}
V_{x}^{*}V_{x}V_{y}^{*}V_{y}=V_{x\vee y}^{*}V_{x\vee y},
\end{align}
and
\begin{align}
\label{eq35}
V_{x}V_{x}^{*}V_{y}V_{y}^{*}=V_{x\vee y}V_{x\vee y}^{*}
\end{align}
for all $x,y\in \N^{\times}$. If $(m,n)$ is a relatively prime pair of elements in $\N^{\times}$, then $n\vee m=nm$,
and hence, we have
\begin{eqnarray*}
\begin{array}{rcl}
V_{n}V_{m}^{*}&=&V_{n}(V_{n}^{*}V_{n}V_{m}^{*}V_{m})V_{m}^{*}\\
&=&V_{n}V_{nm}^{*}V_{nm}V_{m}^{*}\ \ \ \ \ (by\ \ (\ref{eq34}))\\
&=&V_{n}(V_{n} V_{m})^{*} V_{n}V_{m} V_{m}^{*}\\
&=&V_{n} V_{m}^{*} (V_{n}^{*}V_{n} V_{m} V_{m}^{*}),
\end{array}
\end{eqnarray*}
where in the bottom line, by \cite[Lemma 2]{Halmos}, $V_{n}^{*}V_{n}$ commutes with $V_{m} V_{m}^{*}$ as $V_{n}V_{m}=V_{nm}$.
We therefore get
\begin{eqnarray*}
\begin{array}{rcl}
V_{n}V_{m}^{*}&=&V_{n} (V_{m}^{*}V_{m} V_{m}^{*}) V_{n}^{*}V_{n}\\
&=&V_{n} (V_{m}^{*} V_{n}^{*}) V_{n}\\
&=&V_{n} (V_{n} V_{m})^{*} V_{n}\\
&=&V_{n} (V_{nm})^{*} V_{n}\\
&=&V_{n} (V_{mn})^{*} V_{n}\\
&=&V_{n} (V_{m} V_{n})^{*} V_{n}\\
&=&V_{n} (V_{n}^{*} V_{m}^{*}) V_{n}\\
&=&(V_{n} V_{n}^{*} V_{m}^{*}V_{m}) V_{m}^{*} V_{n}\\
&=&(V_{m}^{*}V_{m} V_{n} V_{n}^{*}) V_{m}^{*} V_{n}\\
&=&V_{m}^{*} V_{mn} (V_{m} V_{n})^{*} V_{n}=V_{m}^{*} V_{mn} V_{mn}^{*} V_{n}.
\end{array}
\end{eqnarray*}
Finally, in the bottom line, since $V_{mn} V_{mn}^{*}=V_{m} V_{m}^{*}V_{n} V_{n}^{*}$ by (\ref{eq35}), it follows that
$$V_{n}V_{m}^{*}=V_{m}^{*}V_{m}V_{m}^{*} V_{n}V_{n}^{*}V_{n}=V_{m}^{*} V_{n}.$$

Conversely, assume that $V$ is a partial-isometric representation of $\N^{\times}$ such that $V_{m}^{*}V_{n}=V_{n}V_{m}^{*}$
for every relatively prime pair $(m,n)$ of elements in $\N^{\times}$. We want to prove that $V$ is bicovariant. To do so, we only show that
$V$ satisfies the equation (\ref{eq34}) as the other equation, namely (\ref{eq35}), follows by a similar discussion.
Let $x,y\in \N^{\times}$. If $x$ and $y$ are relatively prime, then one can easily see that $V$ satisfies the equation (\ref{eq34}) as
$V_{x}$ commutes with $V_{y}^{*}$. Now, assume that $x$ and $y$ are not relatively prime, and therefore, we must have $x,y>1$.
By the prime factorization theorem, $x$ and $y$ can be uniquely written as
$$x=(p_{1}^{n_{1}}p_{2}^{n_{2}}\cdot\cdot\cdot p_{k}^{n_{k}}) r\ \ \ \textrm{and}\ \ \
y=(p_{1}^{m_{1}}p_{2}^{m_{2}}\cdot\cdot\cdot p_{k}^{m_{k}}) s,$$
where $p_{1}<p_{2}<...< p_{k}$ are primes, each $n_{i}$ and $m_{i}$ is a positive integer, and $r,s\in \N^{\times}$ such that
all pairs $(r,\prod_{i=1}^{k} p_{i}^{n_{i}})$, $(r,s)$, $(r,\prod_{i=1}^{k} p_{i}^{m_{i}})$,
$(s,\prod_{i=1}^{k} p_{i}^{m_{i}})$, and $(s,\prod_{i=1}^{k} p_{i}^{n_{i}})$ are relatively prime. We let
$a=\prod_{i=1}^{k} p_{i}^{n_{i}}$ and $b=\prod_{i=1}^{k} p_{i}^{m_{i}}$ for convenience. Now, we have
\begin{eqnarray}
\label{eq37}
\begin{array}{rcl}
V_{x}^{*}V_{x}V_{y}^{*}V_{y}&=&V_{ar}^{*}V_{ar}V_{bs}^{*}V_{bs}\\
&=&(V_{a}V_{r})^{*}V_{a}V_{r}(V_{b}V_{s})^{*}V_{b}V_{s}\\
&=&V_{r}^{*}V_{a}^{*} V_{a}(V_{r} V_{s}^{*})V_{b}^{*} V_{b}V_{s}\\
&=&V_{r}^{*}V_{a}^{*} (V_{a} V_{s}^{*}) (V_{r} V_{b}^{*}) V_{b}V_{s}\ \ \ (\textrm{since g.c.d}(r,s)=1)\\
&=&V_{r}^{*} (V_{a}^{*}V_{s}^{*}) V_{a} V_{b}^{*} (V_{r} V_{b}) V_{s}\ \ \ (\textrm{since g.c.d}(a,s)=\textrm{g.c.d}(r,b)=1)\\
&=&V_{r}^{*} (V_{s}V_{a})^{*} V_{a} V_{b}^{*} V_{rb} V_{s}\\
&=&V_{r}^{*} (V_{sa})^{*} V_{a} V_{b}^{*} V_{br} V_{s}\\
&=&V_{r}^{*} (V_{as})^{*} V_{a} V_{b}^{*} V_{b}V_{r} V_{s}\\
&=&V_{r}^{*} (V_{a}V_{s})^{*} V_{a} V_{b}^{*} V_{b} V_{rs}\\
&=&(V_{r}^{*} V_{s}^{*}) (V_{a}^{*} V_{a} V_{b}^{*} V_{b}) V_{rs}\\
&=&(V_{s} V_{r})^{*} (V_{a}^{*} V_{a} V_{b}^{*} V_{b}) V_{rs}\\
&=&V_{sr}^{*} (V_{a}^{*} V_{a} V_{b}^{*} V_{b}) V_{rs}=V_{rs}^{*} (V_{a}^{*} V_{a} V_{b}^{*} V_{b}) V_{rs}.
\end{array}
\end{eqnarray}
Then, in the bottom line, for $V_{a}^{*} V_{a} V_{b}^{*} V_{b}$, since the pairs $(V_{t},V_{u})$ and $(V_{t}^{*},V_{u}^{*})$ obviously commute
for all $t,u\in\N^{\times}$, and g.c.d$(p^{m},q^{n})=1$ for distinct primes $p$ and $q$ and positive integers $m$ and $n$, we have
\begin{eqnarray*}
\begin{array}{l}
V_{a}^{*} V_{a} V_{b}^{*} V_{b}\\
=(V_{p_{k}^{n_{k}}}^{*} \cdot\cdot\cdot V_{p_{2}^{n_{2}}}^{*} V_{p_{1}^{n_{1}}}^{*})
(V_{p_{1}^{n_{1}}} V_{p_{2}^{n_{2}}} \cdot\cdot\cdot V_{p_{k}^{n_{k}}})
(V_{p_{k}^{m_{k}}}^{*} \cdot\cdot\cdot V_{p_{2}^{m_{2}}}^{*} V_{p_{1}^{m_{1}}}^{*})
(V_{p_{1}^{m_{1}}} V_{p_{2}^{m_{2}}} \cdot\cdot\cdot V_{p_{k}^{m_{k}}})\\
=(V_{p_{1}^{n_{1}}}^{*} V_{p_{2}^{n_{2}}}^{*} \cdot\cdot\cdot V_{p_{k}^{n_{k}}}^{*})
(V_{p_{1}^{n_{1}}} V_{p_{2}^{n_{2}}} \cdot\cdot\cdot V_{p_{k}^{n_{k}}})
(V_{p_{1}^{m_{1}}}^{*} V_{p_{2}^{m_{2}}}^{*} \cdot\cdot\cdot V_{p_{k}^{m_{k}}}^{*})
(V_{p_{1}^{m_{1}}} V_{p_{2}^{m_{2}}} \cdot\cdot\cdot V_{p_{k}^{m_{k}}})\\
=(V_{p_{1}^{n_{1}}}^{*}V_{p_{1}^{n_{1}}} V_{p_{1}^{m_{1}}}^{*}V_{p_{1}^{m_{1}}})
(V_{p_{2}^{n_{2}}}^{*} \cdot\cdot\cdot V_{p_{k}^{n_{k}}}^{*})
(V_{p_{2}^{n_{2}}} \cdot\cdot\cdot V_{p_{k}^{n_{k}}})
(V_{p_{2}^{m_{2}}}^{*} \cdot\cdot\cdot V_{p_{k}^{m_{k}}}^{*})
(V_{p_{2}^{m_{2}}} \cdot\cdot\cdot V_{p_{k}^{m_{k}}})\\
\cdot\\
\cdot\\
\cdot\\
=(V_{p_{1}^{n_{1}}}^{*}V_{p_{1}^{n_{1}}} V_{p_{1}^{m_{1}}}^{*}V_{p_{1}^{m_{1}}})
(V_{p_{2}^{n_{2}}}^{*}V_{p_{2}^{n_{2}}} V_{p_{2}^{m_{2}}}^{*}V_{p_{2}^{m_{2}}})
(V_{p_{3}^{n_{3}}}^{*}V_{p_{3}^{n_{3}}} V_{p_{3}^{m_{3}}}^{*}V_{p_{3}^{m_{3}}})
\cdot\cdot\cdot
(V_{p_{k}^{n_{k}}}^{*}V_{p_{k}^{n_{k}}} V_{p_{k}^{m_{k}}}^{*}V_{p_{k}^{m_{k}}}).
\end{array}
\end{eqnarray*}
Let $t_{i}=\max\{n_{i}, m_{i}\}$ for every $1\leq i\leq k$. If $\max\{n_{i}, m_{i}\}=n_{i}$, then
\begin{eqnarray*}
\begin{array}{rcl}
V_{p_{i}^{n_{i}}}^{*}V_{p_{i}^{n_{i}}} V_{p_{i}^{m_{i}}}^{*}V_{p_{i}^{m_{i}}}&=&
V_{p_{i}^{n_{i}}}^{*}(V_{p_{i}^{(n_{i}-m_{i})}p_{i}^{m_{i}}}) V_{p_{i}^{m_{i}}}^{*}V_{p_{i}^{m_{i}}}\\
&=&V_{p_{i}^{n_{i}}}^{*} V_{p_{i}^{(n_{i}-m_{i})}} (V_{p_{i}^{m_{i}}} V_{p_{i}^{m_{i}}}^{*}V_{p_{i}^{m_{i}}})\\
&=&V_{p_{i}^{n_{i}}}^{*} V_{p_{i}^{(n_{i}-m_{i})}} V_{p_{i}^{m_{i}}}\\
&=&V_{p_{i}^{n_{i}}}^{*} V_{p_{i}^{(n_{i}-m_{i})} p_{i}^{m_{i}}}\\
&=&V_{p_{i}^{n_{i}}}^{*} V_{p_{i}^{n_{i}}}=V_{p_{i}^{t_{i}}}^{*} V_{p_{i}^{t_{i}}}.
\end{array}
\end{eqnarray*}
If $\max\{n_{i}, m_{i}\}=m_{i}$, then
\begin{eqnarray*}
\begin{array}{rcl}
V_{p_{i}^{n_{i}}}^{*}V_{p_{i}^{n_{i}}} V_{p_{i}^{m_{i}}}^{*}V_{p_{i}^{m_{i}}}&=&
V_{p_{i}^{n_{i}}}^{*}V_{p_{i}^{n_{i}}} (V_{p_{i}^{(m_{i}-n_{i})}p_{i}^{n_{i}}})^{*}V_{p_{i}^{m_{i}}}\\
&=&V_{p_{i}^{n_{i}}}^{*}V_{p_{i}^{n_{i}}} (V_{p_{i}^{(m_{i}-n_{i})}}V_{p_{i}^{n_{i}}})^{*}V_{p_{i}^{m_{i}}}\\
&=&(V_{p_{i}^{n_{i}}}^{*}V_{p_{i}^{n_{i}}}V_{p_{i}^{n_{i}}}^{*}) V_{p_{i}^{(m_{i}-n_{i})}}^{*}V_{p_{i}^{m_{i}}}\\
&=&V_{p_{i}^{n_{i}}}^{*} V_{p_{i}^{(m_{i}-n_{i})}}^{*}V_{p_{i}^{m_{i}}}\\
&=&(V_{p_{i}^{(m_{i}-n_{i})}} V_{p_{i}^{n_{i}}})^{*} V_{p_{i}^{m_{i}}}\\
&=&(V_{p_{i}^{(m_{i}-n_{i})}p_{i}^{n_{i}}})^{*} V_{p_{i}^{m_{i}}}\\
&=&V_{p_{i}^{m_{i}}}^{*} V_{p_{i}^{m_{i}}}=V_{p_{i}^{t_{i}}}^{*} V_{p_{i}^{t_{i}}}.
\end{array}
\end{eqnarray*}
So, for every $1\leq i\leq k$,
$$V_{p_{i}^{n_{i}}}^{*}V_{p_{i}^{n_{i}}} V_{p_{i}^{m_{i}}}^{*}V_{p_{i}^{m_{i}}}=V_{p_{i}^{t_{i}}}^{*} V_{p_{i}^{t_{i}}},$$
from which, it follows that
\begin{eqnarray}
\label{eq36}
\begin{array}{rcl}
V_{a}^{*} V_{a} V_{b}^{*} V_{b}&=&(V_{p_{1}^{t_{1}}}^{*} V_{p_{1}^{t_{1}}})(V_{p_{2}^{t_{2}}}^{*} V_{p_{2}^{t_{2}}})
(V_{p_{3}^{t_{3}}}^{*} V_{p_{3}^{t_{3}}})\cdot\cdot\cdot(V_{p_{k}^{t_{k}}}^{*} V_{p_{k}^{t_{k}}})\\
&=&(V_{p_{1}^{t_{1}}}^{*} V_{p_{2}^{t_{2}}}^{*}) (V_{p_{1}^{t_{1}}} V_{p_{2}^{t_{2}}})
(V_{p_{3}^{t_{3}}}^{*} V_{p_{3}^{t_{3}}})\cdot\cdot\cdot(V_{p_{k}^{t_{k}}}^{*} V_{p_{k}^{t_{k}}})\\
&=&(V_{p_{1}^{t_{1}}}^{*} V_{p_{2}^{t_{2}}}^{*} V_{p_{3}^{t_{3}}}^{*})
(V_{p_{1}^{t_{1}}} V_{p_{2}^{t_{2}}} V_{p_{3}^{t_{3}}}) (V_{p_{4}^{t_{4}}}^{*} V_{p_{4}^{t_{4}}})
\cdot\cdot\cdot(V_{p_{k}^{t_{k}}}^{*} V_{p_{k}^{t_{k}}})\\
&\cdot&\\
&\cdot&\\
&\cdot&\\
&=&(V_{p_{1}^{t_{1}}}^{*} V_{p_{2}^{t_{2}}}^{*} V_{p_{3}^{t_{3}}}^{*}\cdot\cdot\cdot V_{p_{k}^{t_{k}}}^{*})
(V_{p_{1}^{t_{1}}} V_{p_{2}^{t_{2}}} V_{p_{3}^{t_{3}}}\cdot\cdot\cdot V_{p_{k}^{t_{k}}})\\
&=&(V_{p_{k}^{t_{k}}\cdot\cdot\cdot p_{3}^{t_{3}} p_{2}^{t_{2}} p_{1}^{t_{1}}})^{*}
(V_{p_{1}^{t_{1}} p_{2}^{t_{2}} p_{3}^{t_{3}} \cdot\cdot\cdot p_{k}^{t_{k}}})\\
&=&(V_{p_{1}^{t_{1}} p_{2}^{t_{2}} p_{3}^{t_{3}} \cdot\cdot\cdot p_{k}^{t_{k}}})^{*}
(V_{p_{1}^{t_{1}} p_{2}^{t_{2}} p_{3}^{t_{3}} \cdot\cdot\cdot p_{k}^{t_{k}}})\\
&=&(V_{\prod_{i=1}^{k} p_{i}^{t_{i}}})^{*}
(V_{\prod_{i=1}^{k} p_{i}^{t_{i}}})=V_{a\vee b}^{*} V_{a\vee b}.
\end{array}
\end{eqnarray}
Finally, by applying (\ref{eq36}) to (\ref{eq37}), since $x\vee y=(\prod_{i=1}^{k} p_{i}^{t_{i}})rs=(a\vee b)rs$, we get
\begin{eqnarray*}
\begin{array}{rcl}
V_{x}^{*}V_{x}V_{y}^{*}V_{y}&=&V_{rs}^{*} (V_{a\vee b}^{*} V_{a\vee b}) V_{rs}\\
&=&(V_{a\vee b} V_{rs})^{*} (V_{a\vee b} V_{rs})\\
&=&(V_{(a\vee b)rs})^{*} (V_{(a\vee b)rs})=V_{x\vee y}^{*} V_{x\vee y}.
\end{array}
\end{eqnarray*}
So, we are done.
\end{proof}

\begin{definition}
\label{L-Nica-act}
Let $(A,P,\alpha)$ be a dynamical system, in which $P$ and $P^{\textrm{o}}$ are both left LCM semigroups. The action
$\alpha$ is called \emph{left-Nica covariant} if it satisfies
\begin{align}
\label{LNA}
\overline{\alpha}_{x}(1)\overline{\alpha}_{y}(1)=
   \begin{cases}
      \overline{\alpha}_{z}(1) &\textrm{if}\empty\ \text{$xP\cap yP=zP$,}\\
      0 &\textrm{if}\empty\ \text{$xP\cap yP=\emptyset$.}\\
   \end{cases}
\end{align}
\end{definition}
We should mention that the above definition is well-defined. This is due to the fact that if $tP=xP\cap yP=zP$, then there is an
invertible element $u$ of $P$ such that $t=zu$. Since $u$ is invertible, $\alpha_{u}$ becomes an automorphism of $A$, and hence
$\overline{\alpha}_{u}(1)=1$. So, it follows that
$$\overline{\alpha}_{t}(1)=\overline{\alpha}_{zu}(1)=\overline{\alpha}_{z}(\overline{\alpha}_{u}(1))=
\overline{\alpha}_{z}(1).$$

\begin{remark}
\label{LNA-rmk}
Let $(A,P,\alpha)$ be a dynamical system, in which $P$ and $P^{\textrm{o}}$ are both left LCM semigroups, and the action
$\alpha$ is left Nica-covariant. If $(\pi,V)$ is covariant partial-isometric of the system, then the representation $V$
satisfies the left Nica-covariance condition (\ref{Nica-2}). One can easily see this by applying the equation
$V_{x}V_{x}^{*}=\overline{\pi}(\overline{\alpha}_{x}(1))$ (see Lemma \ref{cov2-lemma}). Thus, the representation
$V$ is actually bicovariant.
\end{remark}

Let us also recall that for $C^*$-algebras $A$ and $B$, there are nondegenerate homomorphisms
$$k_{A}:A\rightarrow \M(A\otimes_{\textrm{max}} B)\ \ \textrm{and}\ \ k_{B}:B\rightarrow \M(A\otimes_{\textrm{max}} B)$$
such that
$$k_{A}(a)k_{B}(b)=k_{B}(b)k_{A}(a)=a\otimes b$$
for all $a\in A$ and $b\in B$ (see \cite[Theorem B. 27]{RW}). Moreover, One can see that
the extensions $\overline{k_{A}}$ and $\overline{k_{B}}$ of the nondegenerate homomorphisms $k_{A}$ and $k_{B}$,
respectively, have also commuting ranges. Therefore, there is a homomorphism
$$\overline{k_{A}}\otimes_{\max}\overline{k_{B}}:\M(A)\otimes_{\max} \M(B)\rightarrow \M(A\otimes_{\textrm{max}} B),$$
which is the identity map on $A\otimes_{\textrm{max}} B$ (see \cite[Remark 2.2]{Larsen}).

\begin{theorem}
\label{P*S-tensor}
Suppose that the unital semigroups $P$, $P^{\textrm{o}}$, $S$, and $S^{\textrm{o}}$ are all left LCM.
Let $(A,P,\alpha)$ and $(B,S,\beta)$ be dynamical systems in which the actions $\alpha$ and $\beta$ are both left Nica-covariant.
Then, we have the following isomorphism:
\begin{align}
\label{P*S-CP}
(A\otimes_{\textrm{max}} B)\times_{\alpha\otimes \beta}^{\piso}(P\times S)
\simeq (A\times_{\alpha}^{\piso} P) \otimes_{\textrm{max}} (B\times_{\beta}^{\piso} S).
\end{align}
\end{theorem}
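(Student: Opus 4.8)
The plan is to construct homomorphisms in both directions between the two $C^*$-algebras in (\ref{P*S-CP}) and check they are mutually inverse, in the spirit of \cite{Larsen} and \cite{LR}. First I would assemble the standing ingredients: by (\ref{eq17}) the semigroup $P\times S$ is left LCM with left-LCM opposite, by \cite[Lemma 2.3]{Larsen} the action $\alpha\otimes\beta$ is by extendible endomorphisms, and since $\alpha$ and $\beta$ are left Nica-covariant, (\ref{eq17}) shows $\alpha\otimes\beta$ is left Nica-covariant too; hence all three partial-isometric crossed products in (\ref{P*S-CP}) exist by Proposition \ref{CP-exists}. Denote by $(i_A,i_P)$, $(j_B,j_S)$, $(I_{A\otimes B},I_{P\times S})$ the universal covariant pairs of the systems $(A,P,\alpha)$, $(B,S,\beta)$, $(A\otimes_{\max}B,P\times S,\alpha\otimes\beta)$, write $D=(A\otimes_{\max}B)\times_{\alpha\otimes\beta}^{\piso}(P\times S)$ and $D'=(A\times_{\alpha}^{\piso}P)\otimes_{\max}(B\times_{\beta}^{\piso}S)$, and let $k_1,k_2$ be the canonical commuting-range homomorphisms of $A\times_{\alpha}^{\piso}P$ and $B\times_{\beta}^{\piso}S$ into $\M(D')$, and $k_A,k_B$ those of $A$ and $B$ into $\M(A\otimes_{\max}B)$.

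For the homomorphism $\Phi\colon D\to D'$: by Remark \ref{LNA-rmk} the representations $i_P$ and $j_S$ are bicovariant, and $\overline{k_1}\circ i_P$ $*$-commutes with $\overline{k_2}\circ j_S$ since they take values in complementary legs of $\M(D')$; so Lemma \ref{bicov-lem} produces a bicovariant (in particular Nica) partial-isometric representation $V$ of $P\times S$ into $\M(D')$ with $V_{(x,r)}=(\overline{k_1}\circ i_P)(x)\,(\overline{k_2}\circ j_S)(r)$. Pairing $V$ with the nondegenerate representation $\pi$ of $A\otimes_{\max}B$ determined by $\pi(a\otimes b)=(k_1\circ i_A)(a)\,(k_2\circ j_B)(b)$, a routine computation from the individual covariance relations and the commuting of the legs shows $(\pi,V)$ is a covariant partial-isometric representation of $(A\otimes_{\max}B,P\times S,\alpha\otimes\beta)$. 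The universal property (Definition \ref{NT-CP-df}(ii)) then supplies a nondegenerate homomorphism $\Phi:=\pi\times V$, and on a spanning element it acts by $\Phi\big(I_{P\times S}(x,r)^*I_{A\otimes B}(a\otimes b)I_{P\times S}(y,s)\big)=\big(i_P(x)^*i_A(a)i_P(y)\big)\otimes\big(j_S(r)^*j_B(b)j_S(s)\big)$; by Remark \ref{rmk-1} these elements are dense in $D'$, so $\Phi$ is onto $D'$.

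For the inverse $\Psi\colon D'\to D$: since $\alpha\otimes\beta$ is left Nica-covariant, $I_{P\times S}$ is bicovariant (Remark \ref{LNA-rmk}), so by the converse part of Lemma \ref{bicov-lem} the formulas $V_P(x):=I_{P\times S}(x,e_S)$ and $W_S(s):=I_{P\times S}(e_P,s)$ define bicovariant partial-isometric representations of $P$ and $S$ that $*$-commute. Setting $\pi_A:=\overline{I_{A\otimes B}}\circ k_A$ and $\pi_B:=\overline{I_{A\otimes B}}\circ k_B$, one checks that $(\pi_A,V_P)$ is covariant for $(A,P,\alpha)$ and $(\pi_B,W_S)$ for $(B,S,\beta)$, using that $\overline{(\alpha\otimes\beta)}_{(x,e_S)}$ acts as $\overline{\alpha}_x$ on the $A$-leg and trivially on the $B$-leg, so the covariance relations of the universal pair restrict correctly. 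This yields nondegenerate homomorphisms $\Psi_A:=\pi_A\times V_P$ and $\Psi_B:=\pi_B\times W_S$ into $\M(D)$, and the crucial point is that their ranges commute: $\pi_A(a)$ commutes with $\pi_B(b)$ because $k_A,k_B$ do, $V_P(x)$ commutes with $W_S(s)$ since both products equal $I_{P\times S}(x,s)$, and the mixed relations $\pi_A(a)W_S(s)=W_S(s)\pi_A(a)$ and $V_P(x)\pi_B(b)=\pi_B(b)V_P(x)$ come out by conjugating and feeding $V_P(x)V_P(x)^*=\overline{\pi_A}(\overline{\alpha}_x(1))$ (from Lemma \ref{cov2-lemma}) into the two covariance relations of $(I_{A\otimes B},I_{P\times S})$, exactly the partial-isometry maneuver used in the proof of Lemma \ref{bicov-lem}. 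Hence $\Psi:=\Psi_A\otimes_{\max}\Psi_B$ is defined on $D'$; evaluating it on the spanning element above returns $I_{P\times S}(x,r)^*I_{A\otimes B}(a\otimes b)I_{P\times S}(y,s)$, so $\Psi$ actually takes values in $D$ and is onto.

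Comparing the two spanning-element computations shows $\Psi\circ\Phi=\id_D$ and $\Phi\circ\Psi=\id_{D'}$, so $\Phi$ realizes the isomorphism (\ref{P*S-CP}). The step I expect to be the main obstacle is the commutation of the ranges of $\Psi_A$ and $\Psi_B$: inside a single crossed product the images of $A$ and of the semigroup never commute, so one must genuinely extract commutation of the ``$A$-leg'' generators with the ``$S$-leg'' generators, and this only works because $\alpha\otimes\beta$ respects the tensor decomposition, after which one is reduced to the partial-isometry juggling of Lemma \ref{bicov-lem}. A secondary nuisance is the multiplier-algebra bookkeeping (identities relating the extended endomorphisms to $k_A$ and $k_B$, and checking that each constructed map lands in the $C^*$-algebra and not merely in its multiplier algebra), but this is routine given \cite[Lemma 2.3]{Larsen} and \cite[Remark 2.2]{Larsen}.
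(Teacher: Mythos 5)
Your proposal is correct and follows essentially the same route as the paper: the same ingredients (left Nica-covariance of $\alpha\otimes\beta$, Remark \ref{LNA-rmk}, the decomposition of bicovariant representations via Lemma \ref{bicov-lem}, and the commuting-ranges argument for the two legs) carry all the weight in both arguments. The only difference is packaging — you build two explicit mutually inverse homomorphisms, whereas the paper verifies that $(D',j_{A\otimes_{\max}B},j_{P\times S})$ satisfies Definition \ref{NT-CP-df} and invokes uniqueness of the crossed product — and your identification of the range-commutation of $\Psi_A$ and $\Psi_B$ as the genuine obstacle matches where the paper spends its effort.
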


\begin{proof}
Let the triples $(A\times_{\alpha}^{\piso} P,i_{A},i_{P})$ and $(B\times_{\alpha}^{\piso} S,i_{B},i_{S})$ be
the partial-isometric crossed products of the dynamical systems $(A,P,\alpha)$ and $(B,S,\beta)$, respectively.
Suppose that $(k_{A\times_{\alpha} P},k_{B\times_{\alpha} S})$ is the canonical pair of the algebras $A\times_{\alpha}^{\piso} P$ and
$B\times_{\alpha}^{\piso} S$ into the multiplier algebra
$\M((A\times_{\alpha}^{\piso} P) \otimes_{\textrm{max}} (B\times_{\beta}^{\piso} S))$. Define the map
$$j_{A\otimes_{\textrm{max}} B}:A\otimes_{\textrm{max}} B\rightarrow
(A\times_{\alpha}^{\piso} P) \otimes_{\textrm{max}} (B\times_{\beta}^{\piso} S)$$
by $j_{A\otimes_{\textrm{max}} B}:=i_{A}\otimes_{\textrm{max}} i_{B}$ (see \cite[Lemma B. 31]{RW}), and therefore, we have
$$j_{A\otimes_{\textrm{max}} B}(a\otimes b)=(i_{A}\otimes_{\textrm{max}} i_{B})(a\otimes b)=i_{A}(a)\otimes i_{B}(b)
=k_{A\times_{\alpha} P}(i_{A}(a))k_{B\times_{\alpha} S}(i_{B}(b))$$ for all $a,b\in A$. Also, define a map
$$j_{P\times S}:P\times S\rightarrow \M((A\times_{\alpha}^{\piso} P) \otimes_{\textrm{max}} (B\times_{\beta}^{\piso} S))$$
by
$$j_{P\times S}(x,t)
=\overline{k_{A\times_{\alpha} P}}\otimes_{\max}\overline{k_{B\times_{\alpha} S}}(i_{P}(x)\otimes i_{S}(t))
=\overline{k_{A\times_{\alpha} P}}(i_{P}(x))\overline{k_{B\times_{\alpha} S}}(i_{S}(t))$$
for all $(x,t)\in P\times S$. We claim that the triple
$$\big((A\times_{\alpha}^{\piso} P) \otimes_{\textrm{max}} (B\times_{\beta}^{\piso} S),j_{A\otimes_{\textrm{max}} B},j_{P\times S}\big)$$
is a partial-isometric crossed product of the system $(A\otimes_{\max} B,P\times S,\alpha\otimes \beta)$. To prove our claim,
first note that, since the homomorphisms $i_{A}$ and $i_{B}$ are nondegenerate, so is the homomorphism $j_{A\otimes_{\textrm{max}} B}$.
Next, we show that the map $j_{P\times S}$ is a bicovariant partial-isometric representation of $P\times S$.
To do so, first note that, since the actions $\alpha$ and $\beta$ are left Nica-covariant,
the representations $i_{P}$ and $i_{S}$ are bicovariant (see Remark \ref{LNA-rmk}). It follows that the maps
$$j_{P}:P\rightarrow \M((A\times_{\alpha}^{\piso} P) \otimes_{\textrm{max}} (B\times_{\beta}^{\piso} S))$$
and
$$j_{S}:S\rightarrow \M((A\times_{\alpha}^{\piso} P) \otimes_{\textrm{max}} (B\times_{\beta}^{\piso} S))$$
given by compositions
$$P\stackrel{i_{P}}{\longrightarrow} \M(A\times_{\alpha}^{\piso} P) \stackrel{\overline{k_{A\times_{\alpha} P}}}
{\longrightarrow} \M((A\times_{\alpha}^{\piso} P) \otimes_{\textrm{max}} (B\times_{\beta}^{\piso} S))$$
and
$$S\stackrel{i_{S}}{\longrightarrow} \M(B\times_{\beta}^{\piso} S) \stackrel{\overline{k_{B\times_{\beta} S}}}
{\longrightarrow} \M((A\times_{\alpha}^{\piso} P) \otimes_{\textrm{max}} (B\times_{\beta}^{\piso} S)),$$
respectively, are bicovariant partial-isometric representations of $P$ and $S$ in the multiplier algebra
$\M((A\times_{\alpha}^{\piso} P) \otimes_{\textrm{max}} (B\times_{\beta}^{\piso} S))$.
Moreover, since the homomorphisms $\overline{k_{A\times_{\alpha} P}}$ and $\overline{k_{B\times_{\alpha} S}}$ have
commuting ranges, each $j_{P}(x)$ $*$-commutes with each $j_{S}(t)$. Therefore, as
$$j_{P\times S}(x,t)=\overline{k_{A\times_{\alpha} P}}(i_{P}(x))\overline{k_{B\times_{\alpha} S}}(i_{S}(t))
=j_{P}(x)j_{S}(t),$$
it follows by Lemma (\ref{bicov-lem}) that the map $j_{P\times S}$ must be a bicovariant partial-isometric
representation of $P\times S$. Moreover, by using the covariance equations of the pairs $(i_{A},i_{P})$ and
$(i_{B},i_{S})$, and the commutativity of the ranges of the homomorphisms
$\overline{k_{A\times_{\alpha} P}}$ and $\overline{k_{B\times_{\alpha} S}}$, one can see that the pair
$(j_{A\otimes_{\textrm{max}} B},j_{P\times S})$ satisfies the covariance equations
$$j_{A\otimes_{\textrm{max}} B}((\alpha\otimes \beta)_{(x,t)}(a\otimes b))=
j_{P\times S}(x,t)j_{A\otimes_{\textrm{max}} B}(a\otimes b)j_{P\times S}(x,t)^{*}$$ and
$$j_{P\times S}(x,t)^{*}j_{P\times S}(x,t)j_{A\otimes_{\textrm{max}} B}(a\otimes b)=
j_{A\otimes_{\textrm{max}} B}(a\otimes b)j_{P\times S}(x,t)^{*}j_{P\times S}(x,t).$$

Next, suppose that the pair $(\pi,U)$ is covariant partial-isometric representation of
$(A\otimes_{\max} B,P\times S,\alpha\otimes \beta)$ on a Hilbert space $H$. We want to get a nondegenerate representation
$\pi\times U$ of $(A\times_{\alpha}^{\piso} P) \otimes_{\max} (B\times_{\beta}^{\piso} S)$ such that
$$(\pi\times U)\circ j_{A\otimes_{\textrm{max}} B}=\pi\ \ \ \textrm{and}\ \ \
(\overline{\pi\times U})\circ j_{P\times S}=U.$$
Let $(k_{A},k_{B})$ be the canonical pair of the $C^{*}$-algebras $A$ and $B$ into the multiplier algebra $\M(A \otimes_{\textrm{max}} B)$.
The compositions
$$A\stackrel{k_{A}}{\longrightarrow} \M(A \otimes_{\textrm{max}} B) \stackrel{\overline{\pi}}{\longrightarrow} B(H)$$
and
$$B\stackrel{k_{B}}{\longrightarrow} \M(A \otimes_{\textrm{max}} B) \stackrel{\overline{\pi}}{\longrightarrow} B(H)$$
give us the nondegenerate representations $\pi_{A}$ and $\pi_{B}$ of $A$ and $B$ on $H$ with commuting ranges, respectively.
This is due to the fact that the ranges of $i_{A}$ and $i_{B}$ commute (see also \cite[Corollary B. 22]{RW}).
Also, define the maps
$$V:P\rightarrow B(H)\ \ \textrm{and}\ \ W:S\rightarrow B(H)$$
by
$$V_{x}:=U_{(x,e_{S})}\ \ \textrm{and}\ \ W_t:=U_{(e_{P},t)}$$
for all $x\in P$ and $t\in S$, respectively. Since the representation $U$ already satisfies the right Nica covariance condition
(\ref{Nica-1}), if we show that it satisfies the left Nica covariance condition (\ref{Nica-2}), too, then it is bicovariant.
Therefore, it follows by Lemma \ref{bicov-lem} that the maps $V$ and $W$ are bicovariant partial-isometric representations
such that each $V_{x}$ $*$-commutes with each $W_{t}$ for all $x\in P$ and $t\in S$. By Remark \ref{LNA-rmk}, we only need
to verify that the action $\alpha\otimes \beta$ in the system $(A\otimes_{\max} B,P\times S,\alpha\otimes \beta)$ is
left Nica-covariant. Firstly, we have
\begin{eqnarray}
\label{eq24}
\begin{array}{rcl}
\overline{(\alpha\otimes \beta)}_{(x,t)}\circ(\overline{k_{A}}\otimes_{\max}\overline{k_{B}})
&=&\overline{\alpha_{x}\otimes \beta_{t}}\circ(\overline{k_{A}}\otimes_{\max}\overline{k_{B}})\\
&=&(\overline{k_{A}}\circ\overline{\alpha}_{x})\otimes_{\max}(\overline{k_{B}}\circ\overline{\beta}_{t})
\end{array}
\end{eqnarray}
for all $(x,t)\in P\times S$ (see \cite[Lemma 2.3]{Larsen}). It follows that
\begin{eqnarray*}
\begin{array}{l}
\overline{(\alpha\otimes \beta)}_{(x,r)}(1_{\M(A \otimes_{\textrm{max}} B)})
\overline{(\alpha\otimes \beta)}_{(y,s)}(1_{\M(A \otimes_{\textrm{max}} B)})\\
=\big[\overline{\alpha_{x}\otimes \beta_{r}}\big(\overline{k_{A}}(1_{\M(A)})\overline{k_{B}}(1_{\M(B)})\big)\big]
\big[\overline{\alpha_{y}\otimes \beta_{s}}\big(\overline{k_{A}}(1_{\M(A)})\overline{k_{B}}(1_{\M(B)})\big)\big]\\
=\overline{k_{A}}\big(\overline{\alpha}_{x}(1_{\M(A)})\big) \overline{k_{B}}\big(\overline{\beta}_{r}(1_{\M(B)})\big)
\overline{k_{A}}\big(\overline{\alpha}_{y}(1_{\M(A)})\big) \overline{k_{B}}\big(\overline{\beta}_{s}(1_{\M(B)})\big)\\
=\overline{k_{A}}\big(\overline{\alpha}_{x}(1_{\M(A)})\big) \overline{k_{A}}\big(\overline{\alpha}_{y}(1_{\M(A)})\big)
\overline{k_{B}}\big(\overline{\beta}_{r}(1_{\M(B)})\big) \overline{k_{B}}\big(\overline{\beta}_{s}(1_{\M(B)})\big)\\
=\overline{k_{A}}\big(\overline{\alpha}_{x}(1_{\M(A)})\overline{\alpha}_{y}(1_{\M(A)})\big)
\overline{k_{B}}\big(\overline{\beta}_{r}(1_{\M(B)})\overline{\beta}_{s}(1_{\M(B)})\big),
\end{array}
\end{eqnarray*}
and hence,
\begin{eqnarray}
\label{eq25}
\begin{array}{l}
\overline{(\alpha\otimes \beta)}_{(x,r)}(1_{\M(A \otimes_{\textrm{max}} B)})
\overline{(\alpha\otimes \beta)}_{(y,s)}(1_{\M(A \otimes_{\textrm{max}} B)})\\
=\overline{k_{A}}\big(\overline{\alpha}_{x}(1_{\M(A)})\overline{\alpha}_{y}(1_{\M(A)})\big)
\overline{k_{B}}\big(\overline{\beta}_{r}(1_{\M(B)})\overline{\beta}_{s}(1_{\M(B)})\big)
\end{array}
\end{eqnarray}
for all $(x,r),(y,s)\in P\times S$. Now, if
$$(x,r)(P\times S)\cap (y,s)(P\times S)=(z,t)(P\times S)$$ for some $(z,t)\in P\times S$, then, since
$$xP\cap yP=zP\ \ \ \textrm{and}\ \ \ rS\cap sS=tS,$$ and the actions $\alpha$ and $\beta$ are left Nica-covariant,
for (\ref{eq25}), we get
\begin{eqnarray*}
\begin{array}{l}
\overline{(\alpha\otimes \beta)}_{(x,r)}(1_{\M(A \otimes_{\textrm{max}} B)})
\overline{(\alpha\otimes \beta)}_{(y,s)}(1_{\M(A \otimes_{\textrm{max}} B)})\\
=\overline{k_{A}}\big(\overline{\alpha}_{z}(1_{\M(A)})\big) \overline{k_{B}}\big(\overline{\beta}_{t}(1_{\M(B)})\big)\\
=(\overline{k_{A}}\circ\overline{\alpha}_{z})\otimes_{\max} (\overline{k_{B}}\circ\overline{\beta}_{t})(1_{\M(A)}\otimes 1_{\M(B)})\\
=\overline{\alpha_{z}\otimes \beta_{t}}\big(\overline{k_{A}}\otimes_{\max}\overline{k_{B}}(1_{\M(A)}\otimes 1_{\M(B)})\big)
\ \ \ [\textrm{by (\ref{eq24})}]\\
=\overline{(\alpha\otimes \beta)}_{(z,t)}(1_{\M(A \otimes_{\textrm{max}} B)}).
\end{array}
\end{eqnarray*}
If $(x,r)(P\times S)\cap (y,s)(P\times S)=\emptyset$, then
$$xP\cap yP=\emptyset\ \vee\ rS\cap sS=\emptyset,$$ which implies that
$$\overline{\alpha}_{x}(1_{\M(A)})\overline{\alpha}_{y}(1_{\M(A)})=0\ \vee\
\overline{\beta}_{r}(1_{\M(B)})\overline{\beta}_{s}(1_{\M(B)})=0$$
as $\alpha$ and $\beta$ are left Nica-covariant. Thus, for (\ref{eq25}), we get
$$\overline{(\alpha\otimes \beta)}_{(x,r)}(1_{\M(A \otimes_{\textrm{max}} B)})
\overline{(\alpha\otimes \beta)}_{(y,s)}(1_{\M(A \otimes_{\textrm{max}} B)})=0.$$
So, the action $\alpha\otimes \beta$ is left Nica-covariant.

Now, consider the pairs $(\pi_{A},V)$ and $(\pi_{B},W)$. They are indeed the covariant partial-isometric
representations of the systems $(A,P,\alpha)$ and $(B,S,\beta)$ on $H$, respectively. We only show this for $(\pi_{A},V)$ as the proof
for $(\pi_{B},W)$ follows similarly. We have to show that the pair $(\pi_{A},V)$ satisfies the covariance equations (\ref{cov1}).
We have
\begin{eqnarray*}
\begin{array}{rcl}
V_{x}\pi_{A}(a)V_{x}^{*}&=&U_{(x,e_{S})}\overline{\pi}(k_{A}(a))U_{(x,e_{S})}^{*}\\
&=&U_{(x,e_{S})}\overline{\pi}(k_{A}(a))\overline{\pi}(1_{\M(A \otimes_{\textrm{max}} B)})U_{(x,e_{S})}^{*}\\
&=&U_{(x,e_{S})}\overline{\pi}(\overline{k_{A}}(a))\overline{\pi}(\overline{k_{B}}(1_{\M(B)}))U_{(x,e_{S})}^{*}\\
&=&U_{(x,e_{S})}\overline{\pi}(\overline{k_{A}}(a)\overline{k_{B}}(1_{\M(B)}))U_{(x,e_{S})}^{*}\\
&=&\overline{\pi}\big(\overline{(\alpha\otimes \beta)}_{(x,e_{S})}(\overline{k_{A}}(a)\overline{k_{B}}(1_{\M(B)}))\big)
\ \ [\textrm{by the covariance of}\ (\overline{\pi},U)]\\
&=&\overline{\pi}\big(\overline{\alpha_{x}\otimes \beta_{e_{S}}}(\overline{k_{A}}(a)\overline{k_{B}}(1_{\M(B)}))\big)\\
&=&\overline{\pi}\big(\overline{k_{A}}(\overline{\alpha}_{x}(a))\overline{k_{B}}(\overline{\beta}_{e_{S}}(1_{\M(B)}))\big)
\ \ [\textrm{by (\ref{eq24})}]\\
&=&\overline{\pi}\big(k_{A}(\alpha_{x}(a))\overline{k_{B}}(\id_{\M(B)}(1_{\M(B)}))\big)\\
&=&\overline{\pi}\big(k_{A}(\alpha_{x}(a))\overline{k_{B}}(1_{\M(B)})\big)\\
&=&\overline{\pi}\big(k_{A}(\alpha_{x}(a))1_{\M(A \otimes_{\textrm{max}} B)}\big)\\
&=&\overline{\pi}\big(k_{A}(\alpha_{x}(a))\big)=\pi_{A}(\alpha_{x}(a))\\
\end{array}
\end{eqnarray*}
for all $a\in A$ and $x\in P$. Also, by a similar calculation using the covariance of the pair $(\overline{\pi},U)$, it follows that
$$\pi_{A}(a)V_{x}^{*}V_{x}=V_{x}^{*}V_{x}\pi_{A}(a).$$
Consequently, there are nondegenerate representations $\pi_{A}\times V$ and $\pi_{B}\times W$ of the algebras
$A\times_{\alpha}^{\piso} P$ and $B\times_{\beta}^{\piso} S$ on $H$, respectively, such that
$$(\pi_{A}\times V)\circ i_{A}=\pi_{A},\ \overline{\pi_{A}\times V}\circ i_{P}=V\ \ \textrm{and}
\ \ (\pi_{B}\times W)\circ i_{B}=\pi_{B},\ \overline{\pi_{B}\times W}\circ i_{S}=W.$$

Next, we aim to show that the representations $\pi_{A}\times V$ and $\pi_{B}\times W$ have commuting ranges, from which, it
follows that there is a representation $(\pi_{A}\times V)\otimes_{\max}(\pi_{B}\times W)$ of
$(A\times_{\alpha}^{\piso} P)\otimes_{\max} (B\times_{\beta}^{\piso} S)$, which is the desired (nondegenerate) representation
$\pi\times U$. So, it suffices to see that the pairs $(\pi_{A},\pi_{B})$, $(V,W)$, $(V^{*},W)$, $(\pi_{A},W)$, and $(\pi_{B},V)$ all
have commuting ranges. We already saw that this is indeed true for the first three pairs. So, we compute to show that this is also
true for the pair $(\pi_{A},W)$ and skip the similar computation for the pair $(\pi_{B},V)$. We have
\begin{eqnarray*}
\begin{array}{rcl}
W_{t}\pi_{A}(a)&=&U_{(e_{P},t)}\overline{\pi}(k_{A}(a))\\
&=&U_{(e_{P},t)}\overline{\pi}(k_{A}(a)1_{\M(A \otimes_{\textrm{max}} B)})\\
&=&U_{(e_{P},t)}\overline{\pi}(\overline{k_{A}}(a)\overline{k_{B}}(1_{\M(B)}))\\
&=&\overline{\pi}\big(\overline{(\alpha\otimes \beta)}_{(e_{P},t)}(\overline{k_{A}}(a)\overline{k_{B}}(1_{\M(B)}))\big)U_{(e_{P},t)}
\ \ [\textrm{by the covariance of}\ (\overline{\pi},U)]\\
&=&\overline{\pi}\big(\overline{\alpha_{e_{P}}\otimes \beta_{t}}(\overline{k_{A}}(a)\overline{k_{B}}(1_{\M(B)}))\big)U_{(e_{P},t)}\\
&=&\overline{\pi}\big(\overline{k_{A}}(\overline{\alpha}_{e_{P}}(a))\overline{k_{B}}(\overline{\beta}_{t}(1_{\M(B)}))\big)U_{(e_{P},t)}
\ \ [\textrm{by (\ref{eq24})}]\\
&=&\overline{\pi}\big(k_{A}(\id_{A}(a))\overline{k_{B}}(\overline{\beta}_{t}(1_{\M(B)}))\big)U_{(e_{P},t)}\\
&=&(\overline{\pi}\circ k_{A})(a)(\overline{\pi}\circ\overline{k_{B}})\big(\overline{\beta}_{t}(1_{\M(B)})\big)U_{(e_{P},t)}\\
&=&\pi_{A}(a)\overline{\pi_{B}}\big(\overline{\beta}_{t}(1_{\M(B)})\big)W_{t}\\
&=&\pi_{A}(a)W_{t}W_{t}^{*}W_{t}\ \ [\textrm{by the covariance of}\ (\overline{\pi_{B}},W)]\\
&=&\pi_{A}(a)W_{t}.
\end{array}
\end{eqnarray*}
Thus, there is a representation $(\pi_{A}\times V)\otimes_{\max}(\pi_{B}\times W)$ of
$(A\times_{\alpha}^{\piso} P)\otimes_{\max} (B\times_{\beta}^{\piso} S)$ on $H$ such that
$$(\pi_{A}\times V)\otimes_{\max}(\pi_{B}\times W)(\xi\otimes \eta)=(\pi_{A}\times V)(\xi)(\pi_{B}\times W)(\eta)$$
for all $\xi\in (A\times_{\alpha}^{\piso} P)$ and $\eta\in (B\times_{\beta}^{\piso} S)$. Let
$$\pi\times U=(\pi_{A}\times V)\otimes_{\max}(\pi_{B}\times W),$$ which is nondegenerate as both representations
$\pi_{A}\times V$ and $\pi_{B}\times W$ are. Then, we have
\begin{eqnarray*}
\begin{array}{rcl}
\pi\times U(j_{A\otimes_{\textrm{max}} B}(a\otimes b))&=&\pi\times U(i_{A}(a)\otimes i_{B}(b))\\
&=&(\pi_{A}\times V)(i_{A}(a))(\pi_{B}\times W)(i_{B}(b))\\
&=&\pi_{A}(a)\pi_{B}(b)\\
&=&\overline{\pi}(k_{A}(a))\overline{\pi}(k_{B}(b))\\
&=&\overline{\pi}(k_{A}(a)k_{B}(b))=\pi(a\otimes b).
\end{array}
\end{eqnarray*}
To see $(\overline{\pi\times U})\circ j_{P\times S}=U$, we apply the equation
\begin{eqnarray*}
\begin{array}{rcl}
\overline{\pi\times U}\circ
(\overline{k_{A\times_{\alpha} P}}\otimes_{\max}\overline{k_{B\times_{\alpha} S}})&=&
\overline{(\pi_{A}\times V)\otimes_{\max}(\pi_{B}\times W)}\circ
(\overline{k_{A\times_{\alpha} P}}\otimes_{\max}\overline{k_{B\times_{\alpha} S}})\\
&=&\overline{(\pi_{A}\times V)}\otimes_{\max}\overline{(\pi_{B}\times W)},
\end{array}
\end{eqnarray*}
which is valid by \cite[Lemma 2.4]{Larsen}. Therefore, we have
\begin{eqnarray*}
\begin{array}{rcl}
\overline{\pi\times U}(j_{P\times S}(x,t))&=&
\overline{\pi\times U}\big(\overline{k_{A\times_{\alpha} P}}(i_{P}(x))\overline{k_{B\times_{\alpha} S}}(i_{S}(t))\big)\\
&=&\overline{\pi\times U}
\big(\overline{k_{A\times_{\alpha} P}}\otimes_{\max}\overline{k_{B\times_{\alpha} S}}(i_{P}(x)\otimes i_{S}(t))\big)\\
&=&\overline{\pi\times U}\circ(\overline{k_{A\times_{\alpha} P}}\otimes_{\max}\overline{k_{B\times_{\alpha} S}})
(i_{P}(x)\otimes i_{S}(t))\\
&=&\overline{(\pi_{A}\times V)}\otimes_{\max}\overline{(\pi_{B}\times W)}(i_{P}(x)\otimes i_{S}(t))\\
&=&\overline{(\pi_{A}\times V)}(i_{P}(x))\overline{(\pi_{B}\times W)}(i_{S}(t))\\
&=&V_{x}W_{t}=U_{(x,e_{S})}U_{(e_{P},t)}=U_{(x,t)}.
\end{array}
\end{eqnarray*}
Finally, as the algebras $A\times_{\alpha}^{\piso} P$ and $B\times_{\alpha}^{\piso} S$ are spanned by the elements
$i_{P}(x)^{*}i_{A}(a)i_{P}(y)$ and $i_{S}(r)^{*}i_{B}(b)i_{S}(t)$, respectively, the algebra
$(A\times_{\alpha}^{\piso} P)\otimes_{\max} (B\times_{\alpha}^{\piso} S)$ is spanned by the elements
\begin{eqnarray*}
\begin{array}{l}
[i_{P}(x)^{*}i_{A}(a)i_{P}(y)]\otimes [i_{S}(r)^{*}i_{B}(b)i_{S}(t)]\\
=k_{A\times_{\alpha} P}\big(i_{P}(x)^{*}i_{A}(a)i_{P}(y)\big)k_{B\times_{\alpha} S}\big(i_{S}(r)^{*}i_{B}(b)i_{S}(t)\big)\\
=\overline{k_{A\times_{\alpha} P}}(i_{P}(x)^{*})k_{A\times_{\alpha} P}(i_{A}(a))\overline{k_{A\times_{\alpha} P}}(i_{P}(y))
\overline{k_{B\times_{\alpha} S}}(i_{S}(r)^{*})k_{B\times_{\alpha} S}(i_{B}(b))\overline{k_{B\times_{\alpha} S}}(i_{S}(t))\\
=\overline{k_{A\times_{\alpha} P}}(i_{P}(x)^{*})k_{A\times_{\alpha} P}(i_{A}(a))
\overline{k_{B\times_{\alpha} S}}(i_{S}(r)^{*})\overline{k_{A\times_{\alpha} P}}(i_{P}(y))
k_{B\times_{\alpha} S}(i_{B}(b))\overline{k_{B\times_{\alpha} S}}(i_{S}(t))\\
=\overline{k_{A\times_{\alpha} P}}(i_{P}(x)^{*})\overline{k_{B\times_{\alpha} S}}(i_{S}(r)^{*})
k_{A\times_{\alpha} P}(i_{A}(a))k_{B\times_{\alpha} S}(i_{B}(b))
\overline{k_{A\times_{\alpha} P}}(i_{P}(y))\overline{k_{B\times_{\alpha} S}}(i_{S}(t))\\
=\overline{k_{A\times_{\alpha} P}}(i_{P}(x))^{*}\overline{k_{B\times_{\alpha} S}}(i_{S}(r))^{*}
j_{A\otimes_{\textrm{max}} B}(a\otimes b)j_{P\times S}(y,t)\\
=\big[\overline{k_{B\times_{\alpha} S}}(i_{S}(r))\overline{k_{A\times_{\alpha} P}}(i_{P}(x))\big]^{*}
j_{A\otimes_{\textrm{max}} B}(a\otimes b)j_{P\times S}(y,t)\\
=\big[\overline{k_{A\times_{\alpha} P}}(i_{P}(x))\overline{k_{B\times_{\alpha} S}}(i_{S}(r))\big]^{*}
j_{A\otimes_{\textrm{max}} B}(a\otimes b)j_{P\times S}(y,t)\\
=j_{P\times S}(x,r)^{*}j_{A\otimes_{\textrm{max}} B}(a\otimes b)j_{P\times S}(y,t).
\end{array}
\end{eqnarray*}
So, the triple
$$\big((A\times_{\alpha}^{\piso} P) \otimes_{\textrm{max}} (B\times_{\beta}^{\piso} S),j_{A\otimes_{\textrm{max}} B},j_{P\times S}\big)$$
is a partial-isometric crossed product of the system $(A\otimes_{\max} B,P\times S,\alpha\otimes \beta)$. It thus follows that
there is an isomorphism
$$\Gamma:\big((A\otimes_{\textrm{max}} B)\times_{\alpha\otimes \beta}^{\piso}(P\times S),i_{A\otimes_{\textrm{max}} B},i_{P\times S}\big)
\rightarrow (A\times_{\alpha}^{\piso} P) \otimes_{\textrm{max}} (B\times_{\beta}^{\piso} S)$$
such that
\begin{eqnarray*}
\begin{array}{rcl}
\Gamma\big(i_{P\times S}(x,r)^{*}i_{A\otimes_{\textrm{max}} B}(a\otimes b)i_{P\times S}(y,t)\big)
&=&j_{P\times S}(x,r)^{*}j_{A\otimes_{\textrm{max}} B}(a\otimes b)j_{P\times S}(y,t)\\
&=&[i_{P}(x)^{*}i_{A}(a)i_{P}(y)]\otimes [i_{S}(r)^{*}i_{B}(b)i_{S}(t)].
\end{array}
\end{eqnarray*}
This completes the proof.
\end{proof}

Let $P$ be a unital semigroup such that itself and the opposite semigroup $P^{\textrm{o}}$ are both left LCM. For every
$y\in P$, define a map $1_{y}:P\rightarrow \C$ by
\[
1_{y}(x)=
   \begin{cases}
      1 &\textrm{if}\empty\ \text{$x\in yP$,}\\
      0 &\textrm{otherwise,}\\
   \end{cases}
\]
which is the characteristic function of $yP$. Each $1_{y}$ is obviously a function in $\ell^{\infty}(P)$. Then, since
$P$ is right LCM, one can see that we have
\[
1_{x}1_{y}=
   \begin{cases}
      1_{z} &\textrm{if}\empty\ \text{$xP\cap yP=zP$,}\\
      0 &\textrm{$xP\cap yP=\emptyset$.}\\
   \end{cases}
\]
Note that, if $\tilde{z}P=xP\cap yP=zP$, then there is an invertible element $u$ of $P$ such that $\tilde{z}=zu$. It therefore follows
that $s\in zP$ if and only if $s\in \tilde{z}P$ for all $s\in P$, which implies that we must have $1_{z}=1_{\tilde{z}}$. So, the above
equation is well-defined.
Also, we clearly have $1_{y}^{*}=1_{y}$ for all $y\in P$. Therefore, if $B_{P}$ is the $C^{*}$-subalgebra of $\ell^{\infty}(P)$
generated by the characteristic functions $\{1_{y}:y\in P\}$, then we have
$$B_{P}=\clsp\{1_{y}:y\in P\}.$$
Note that the algebra $B_{P}$ is abelian and unital, whose unit element is $1_{e}$ which is a constant function on $P$ with
the constant value $1$. One can see that, in fact, $1_{u}=1_{e}$ for every $u\in P^{*}$.
In addition, the shift on $\ell^{\infty}(P)$ induces an action on $B_{P}$ by injective
endomorphisms. More precisely, for every $x\in P$, the map $\alpha_{x}:\ell^{\infty}(P)\rightarrow \ell^{\infty}(P)$ defined by
\[
\alpha_{x}(f)(t)=
   \begin{cases}
      f(r) &\textrm{if}\empty\ \text{$t=xr$ for some $r\in P$ ($\equiv t\in xP$),}\\
      0 &\textrm{otherwise,}\\
   \end{cases}
\]
for every $f\in \ell^{\infty}(P)$ is an injective endomorphism of $\ell^{\infty}(P)$. Also, the map
$$\alpha:P\rightarrow \End(\ell^{\infty}(P));\ \ x\mapsto \alpha_{x}$$
is a semigroup homomorphism such that $\alpha_{e}=\id$, which gives us an action of $P$ on $\ell^{\infty}(P)$ by injective endomorphisms.
Since $\alpha_{x}(1_{y})=1_{xy}$ for all $x,y\in P$, $\alpha_{x}(B_{P})\subset B_{P}$, and therefore the restriction of the
action $\alpha$ to $B_{P}$ gives an action
$$\tau:P\rightarrow \End(B_{P})$$
by injective endomorphisms such that $\tau_{x}(1_{y})=1_{xy}$ for all $x,y\in P$. Note that $\tau_{x}(1_{e})=1_{x}\neq 1_{e}$ for
all $x\in P\backslash P^{*}$. Consequently, we obtain a dynamical system $(B_{P},P,\tau)$, for which, we want to describe the
corresponding partial-isometric crossed product $(B_{P}\times_{\tau}^{\piso} P,i_{B_{P}},i_{P})$. More precisely, we want to show that
the algebra $B_{P}\times_{\tau}^{\piso} P$ is universal for bicovariant partial-isometric representations of $P$. Once, we have
done this, it would be proper to denote $B_{P}\times_{\tau}^{\piso} P$ by $C^{*}_{\bicov}(P)$. So, this actually
generalizes \cite[Proposition 9.6]{Fowler} from the positive cones of quasi lattice-ordered groups (in the sense of Nica \cite{Nica})
to LCM semigroups.

To start, for our purpose, we borrow some notations from quasi lattice-ordered groups. For every $x,y\in P$, if
$xP\cap yP=zP$ for some $z\in P$, which means that $z$ is a right least common multiple of $x$ and $y$, then we denote such an
element $z$ by $x\vee_{\lt} y$, which may not be unique. If $xP\cap yP=\emptyset$, then we denote $x\vee_{\lt} y=\infty$.
Note that we are using the notation $\vee_{\lt}$ to indicate that we are treating $P$ as a right LCM semigroup. But if we are
treating $P$ as a left LCM semigroup, then we use the notation $\vee_{\rt}$ to distinguish it from $\vee_{\lt}$. Moreover, if
$F=\{x_{1}, x_{2},..., x_{n}\}$ is any finite subset of $P$, then $\sigma F$ is written
for $x_{1}\vee_{\lt} x_{2}\vee_{\lt}...\vee_{\lt} x_{n}$. Therefore, if
$\bigcap_{x\in F} xP=\bigcap_{i=1}^{n} x_{i}P\neq\emptyset$, $\sigma F$ denotes an
element in $$\{y:\bigcap_{i=1}^{n} x_{i}P=yP\},$$
and if $\bigcap_{i=1}^{n} x_{i}P=\emptyset$, then $\sigma F=\infty$.

\begin{lemma}
\label{rep-Bp}
Let $P$ be a unital semigroup such that itself and the opposite semigroup $P^{\textrm{o}}$ are both left LCM. Let
$V$ be any bicovariant partial-isometric representation of $P$ on a Hilbert space $H$. Then:

\begin{itemize}
\item[(i)] there is a (unital) representation $\pi_{V}$ of $B_{P}$ on $H$ such that $\pi_{V}(1_{x})=V_{x}V_{x}^{*}$ for all $x\in P$;
\item[(ii)] the pair $(\pi_{V},V)$ is a covariant partial-isometric representation of $(B_{P},P,\tau)$ on $H$.
\end{itemize}

\end{lemma}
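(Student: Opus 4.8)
The plan is to establish (i) by a finite-dimensional approximation, and then read off (ii) from the covariance properties of $V$ together with Halmos's characterisation of when a product of partial isometries is again a partial isometry. Recall that the hypotheses force $P$ to be both left and right LCM and cancellative on both sides, that each $1_{y}$ is a projection in $\ell^{\infty}(P)$, that the $1_{y}$ mutually commute, and that $\lsp\{1_{y}:y\in P\}$ (which contains $1=1_{e}$) is a norm-dense $*$-subalgebra of $B_{P}$.

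For (i) I want $\pi_{V}$ to be determined on generators by $\pi_{V}(1_{y})=V_{y}V_{y}^{*}$. Because $V$ is bicovariant, each $V_{y}V_{y}^{*}$ is a projection, the family $\{V_{y}V_{y}^{*}\}$ is commuting (the left Nica covariance condition (\ref{Nica-2}) forces each $V_{r}V_{r}^{*}V_{s}V_{s}^{*}$ to be self-adjoint), and (\ref{Nica-2}) is precisely the statement that $y\mapsto V_{y}V_{y}^{*}$ is multiplicative on the semilattice $\{1_{y}:y\in P\}\cup\{0\}$. Now fix a finite set $\{y_{1},\dots,y_{n}\}\subseteq P$ and, for $\omega\in\{0,1\}^{n}$, set $q_{\omega}=\prod_{i}1_{y_{i}}^{\omega(i)}$ and $Q_{\omega}=\prod_{i}(V_{y_{i}}V_{y_{i}}^{*})^{\omega(i)}$, where $p^{1}:=p$ and $p^{0}:=1-p$; the nonzero $q_{\omega}$ are the atoms of the finite-dimensional $C^{*}$-subalgebra generated by $1_{y_{1}},\dots,1_{y_{n}}$ and $1_{e}$, while the $Q_{\omega}$ are mutually orthogonal projections summing to $1$. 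The crucial claim is that $q_{\omega}=0\Rightarrow Q_{\omega}=0$. Granting it, the assignment $q_{\omega}\mapsto Q_{\omega}$ on nonzero atoms extends to a unital $*$-homomorphism of finite-dimensional $C^{*}$-algebras sending $1_{y_{i}}$ to $V_{y_{i}}V_{y_{i}}^{*}$; these partial maps are compatible over finite subsets of $P$, so they assemble to a $*$-homomorphism on $\lsp\{1_{y}:y\in P\}$, which is automatically contractive and therefore extends continuously to the required unital representation $\pi_{V}$ of $B_{P}$, with $\pi_{V}(1)=V_{e}V_{e}^{*}=1$.

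The claim $q_{\omega}=0\Rightarrow Q_{\omega}=0$ is the step I expect to be the main obstacle. Writing $S=\{i:\omega(i)=1\}$ and letting $T$ be its complement: if $\bigcap_{i\in S}y_{i}P=\emptyset$, then every partial intersection of the $y_{i}P$ ($i\in S$) is a principal right ideal or empty, so computing $\prod_{i\in S}V_{y_{i}}V_{y_{i}}^{*}$ two factors at a time and applying (\ref{Nica-2}) repeatedly yields $0$, hence $Q_{\omega}=0$. Otherwise $\bigcap_{i\in S}y_{i}P=zP$ and, similarly, $\prod_{i\in S}V_{y_{i}}V_{y_{i}}^{*}=V_{z}V_{z}^{*}$; moreover $q_{\omega}=0$ says precisely that $zP=\bigcup_{i\in T}(zP\cap y_{i}P)$. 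Since $z=ze\in zP$, we get $z\in zP\cap y_{i_{0}}P=z_{i_{0}}P$ for some $i_{0}\in T$; writing $z=z_{i_{0}}t$ and $z_{i_{0}}=zs$ (both $z_{i_{0}},z\in zP$) and cancelling $z$ from $z=zst$ gives $st=e$, so $s\in P^{*}$ and thus $z_{i_{0}}P=zsP=zP\subseteq y_{i_{0}}P$. Applying (\ref{Nica-2}) once more, together with the fact that a least common right multiple is unique up to right multiplication by a unit and that $V_{u}$ is unitary for $u\in P^{*}$, gives $V_{z}V_{z}^{*}V_{y_{i_{0}}}V_{y_{i_{0}}}^{*}=V_{z}V_{z}^{*}$, so $Q_{\omega}$ acquires the factor $V_{z}V_{z}^{*}(1-V_{y_{i_{0}}}V_{y_{i_{0}}}^{*})=0$. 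What makes this go through is that in a cancellative right LCM semigroup a principal right ideal is never a finite union of proper principal sub-right-ideals; equivalently, $B_{P}$ carries no ``covering'' relations beyond the semilattice relations that $1_{y}\mapsto V_{y}V_{y}^{*}$ already respects.

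Part (ii) is then quick. Since $B_{P}$ and $\pi_{V}$ are unital, $\pi_{V}$ is nondegenerate, and $V$ is a Nica partial-isometric representation by hypothesis, so it remains only to verify the covariance equations (\ref{cov1}) for $(\pi_{V},V)$ with respect to $\tau$; by linearity and continuity it suffices to check them on $b=1_{y}$. For the first, $\pi_{V}(\tau_{x}(1_{y}))=\pi_{V}(1_{xy})=V_{xy}V_{xy}^{*}=V_{x}(V_{y}V_{y}^{*})V_{x}^{*}=V_{x}\pi_{V}(1_{y})V_{x}^{*}$, using $\tau_{x}(1_{y})=1_{xy}$ and $V_{xy}=V_{x}V_{y}$. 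For the second, I must show $V_{x}^{*}V_{x}$ commutes with $\pi_{V}(1_{y})=V_{y}V_{y}^{*}$; but $V_{x}V_{y}=V_{xy}$ is a partial isometry, so $V_{x}^{*}V_{x}$ commutes with $V_{y}V_{y}^{*}$ by \cite[Lemma 2]{Halmos}. Hence $(\pi_{V},V)$ is a covariant partial-isometric representation of $(B_{P},P,\tau)$.
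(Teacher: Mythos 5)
Your proof is correct and follows essentially the same route as the paper's: the heart of both arguments is the claim that a vanishing atom $\prod_{i\in A}1_{y_i}\prod_{i\notin A}(1-1_{y_i})=0$ forces the corresponding product of the projections $V_{y}V_{y}^{*}$ to vanish, established in both cases by locating $\sigma A$ inside some $y_{i_0}P$ with $i_0\notin A$ and deducing $(\sigma A)P\subseteq y_{i_0}P$. The only difference is packaging --- the paper extends the map via the Laca--Raeburn norm formula $\|\sum_{x}\lambda_{x}L_{x}\|=\max\{|\sum_{x\in A}\lambda_{x}|:Q_{A}^{L}\neq 0\}$, whereas you obtain contractivity for free from the finite-dimensional $*$-homomorphism defined on atoms --- and part (ii) is verbatim the same (covariance on generators plus Halmos's lemma).
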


\begin{proof}
We prove (i) by extending \cite[Proposition 1.3 (2)]{LacaR} to LCM semigroups for the particular family
$$\{L_{x}:=V_{x}V_{x}^{*}: x\in P\}$$
of projections, which satifies
$$L_{e}=1\ \ \ \textrm{and}\ \ \ L_{x}L_{y}=L_{x\vee_{\lt} y},$$
where $L_{\infty}=0$. To do so, we make some adjustment to the proof of \cite[Proposition 1.3 (2)]{LacaR}. Define a map
$$\pi:\lsp\{1_{x}:x\in P\}\rightarrow B(H)$$
by
$$\pi\bigg(\sum_{i=1}^{n}\lambda_{x_{i}}1_{x_{i}}\bigg)=\sum_{i=1}^{n}\lambda_{x_{i}}L_{x_{i}}
=\sum_{i=1}^{n}\lambda_{x_{i}}V_{x_{i}}V_{x_{i}}^{*},$$
where $\lambda_{x_{i}}\in \C$ for each $i$. It is obvious that $\pi$ is linear. Next, we show that
\begin{align}
\label{eq27}
\bigg\| \sum_{x\in F}\lambda_{x}L_{x} \bigg\|\leq \bigg\| \sum_{x\in F}\lambda_{x}1_{x} \bigg\|
\end{align}
for any finite subset $F$ of $P$. So, it follows that the map $\pi$ is a well-defined bounded linear map, and therefore, it
extends to a bounded linear map of $B_{p}$ in $B(H)$ such that $1_{x}\mapsto V_{x}V_{x}^{*}$ for all $x\in P$. To see
(\ref{eq27}), we exactly follow \cite[Lemma 1.4]{LacaR} to obtain an expression for the norm of the forms $\sum_{x\in F}\lambda_{x}L_{x}$
by using an appropriate set of mutually orthogonal projections. So, if $F$ is any finite subset of $P$, then for every
nonempty proper subset $A$ of $F$, take $Q_{A}^{L}=\Pi_{x\in F\backslash A}(L_{\sigma A}-L_{\sigma A\vee_{\lt} x})$. Moreover, let
$Q_{\emptyset}^{L}=\Pi_{x\in F}(1-L_{x})$ and $Q_{F}^{L}=\Pi_{x\in F} L_{x}=L_{\sigma F}$. Then, exactly by
following the proof of \cite[Lemma 1.4]{LacaR}, we can show that $\{Q_{A}^{L}: A\subset F\}$ is a decomposition of the identity into
mutually orthogonal projections, such that
\begin{align}
\label{eq28}
\sum_{x\in F}\lambda_{x}L_{x}=\sum_{A\subset F} \bigg(\sum_{x\in A}\lambda_{x}\bigg)Q_{A}^{L}
\end{align}
and
\begin{align}
\label{eq29}
\bigg\| \sum_{x\in F}\lambda_{x}L_{x} \bigg\|=\max\bigg\{\bigg| \sum_{x\in A}\lambda_{x} \bigg|: A\subset F\ \textrm{and}\ Q_{A}^{L}\neq 0\bigg\}.
\end{align}
Also, we have a fact similar to \cite[Remark 1.5]{LacaR}. Suppose that, similarly, $\{Q_{A}: A\subset F\}$ is the decomposition of the
identity corresponding to the family of projections $\{1_{x}: x\in F\}$. Consider
$$Q_{A}=\Pi_{x\in F\backslash A}(1_{\sigma A}-1_{\sigma A\vee_{\lt} x})$$ for any nonempty proper subset $A\subset F$.
If $\sigma A\in x_{0}P$ for some $x_{0}\in F\backslash A$, then
$(\sigma A) P=\bigcap_{y\in A} yP\subset x_{0}P$ which implies that $(\sigma A) P\cap x_{0}P=(\sigma A) P$. So, we have
$\sigma A\vee_{\lt} x_{0}=\sigma A$, and therefore,
$$1_{\sigma A}-1_{\sigma A\vee_{\lt} x_{0}}=1_{\sigma A}-1_{\sigma A}=0.$$
Thus, we get $Q_{A}=0$. Note that when we say $\sigma A\in x_{0}P$ (for some $x_{0}\in F\backslash A$), it means that at least one
element in
\begin{align}
\label{eq30}
\{z: \bigcap_{y\in A} yP=zP\}
\end{align}
belongs to $x_{0}P$, from which, it follows that all elements in (\ref{eq30}) must belong to $x_{0}P$. This is due to the fact that
if $z,\tilde{z}$ are in (\ref{eq30}), then $\tilde{z}=zu$ for some invertible element $u$ of $P$.
Now, conversely, suppose that $$0=Q_{A}=\Pi_{x\in F\backslash A}(1_{\sigma A}-1_{\sigma A\vee_{\lt} x}).$$
This implies that we must have $Q_{A}(r)=0$ for all $r\in P$, in particular, when $r=\sigma A$, and hence
$$0=Q_{A}(r)=\Pi_{x\in F\backslash A}\big(1_{r}(r)-1_{r\vee_{\lt} x}(r)\big)
=\Pi_{x\in F\backslash A}\big(1-1_{r\vee_{\lt} x}(r)\big).$$
Therefore, there is at least one element $x_{0}\in F\backslash A$ such that $1_{r\vee_{\lt} x_{0}}(r)=1$, which implies that
we must have $r\in (r\vee_{\lt} x_{0})P=rP \cap x_{0}P$. It follows that
$\sigma A=r\in x_{0}P$ and therefore, $r\vee_{\lt} x_{0}=\sigma A\vee_{\lt} x_{0}=\sigma A$.
Consequently, we have $Q_{A}\neq 0$ if and only if
$$A=\{x\in F: \sigma A\in xP\}.$$
Eventually, we conclude that if $Q_{A}^{L}\neq 0$, then $Q_{A}\neq 0$. This is due to the fact that, if $Q_{A}=0$, then there is
$x_{0}\in F\backslash A$ such that $\sigma A\in x_{0}P$. Therefore, we get $Q_{A}^{L}=0$ as the factor
$L_{\sigma A}-L_{\sigma A\vee_{\lt} x_{0}}$ in $Q_{A}^{L}$ becomes zero. Thus, it follows that
$$\bigg\{\bigg| \sum_{x\in A}\lambda_{x} \bigg|: A\subset F\ \textrm{and}\ Q_{A}^{L}\neq 0\bigg\}
\subset \bigg\{\bigg| \sum_{x\in A}\lambda_{x} \bigg|: A\subset F\ \textrm{and}\ Q_{A}\neq 0\bigg\},$$
which implies that the inequality (\ref{eq27}) is valid for any finite subset $F$ of $P$. So, we have a bounded linear map
$\pi_{V}:B_{p}\rightarrow B(H)$ (the extension of $\pi$) such that $\pi_{V}(1_{x})=V_{x}V_{x}^{*}$ for all $x\in P$.
Furthermore, since
$$\pi_{V}(1_{x})\pi_{V}(1_{y})=V_{x}V_{x}^{*}V_{y}V_{y}^{*}=V_{x\vee_{\lt} y}V_{x\vee_{\lt} y}^{*}=\pi_{V}(1_{x\vee_{\lt} y})
=\pi_{V}(1_{x}1_{y}),$$
and obviously, $\pi_{V}(1_{x})^{*}=\pi_{V}(1_{x})=\pi_{V}(1_{x}^{*})$, it follows that the map $\pi_{V}$ is actually a $*$-homomorphism,
which is clearly unital. This completes the proof of (i).

To see (ii), it is enough to show that the pair $(\pi_{V},V)$ satisfies the covariance equations (\ref{cov1}) on the spanning
elements of $B_{P}$. For all $x,y\in P$, we have
$$\pi_{V}(\tau_{x}(1_{y}))=1_{xy}=V_{xy}V_{xy}^{*}=V_{x}V_{y}[V_{x}V_{y}]^{*}=V_{x} V_{y}V_{y}^{*} V_{x}^{*}
=V_{x} \pi_{V}(1_{y}) V_{x}^{*}.$$

Also, since the product of partial isometries $V_{x}$ and $V_{y}$ is a partial isometry, namely, $V_{x}V_{y}=V_{xy}$, by
\cite[Lemma 2]{Halmos}, each $V_{x}^{*}V_{x}$ commutes with each $V_{y}V_{y}^{*}$. Hence, we have
$$V_{x}^{*}V_{x}\pi_{V}(1_{y})=V_{x}^{*}V_{x}V_{y}V_{y}^{*}=V_{y}V_{y}^{*}V_{x}^{*}V_{x}=\pi_{V}(1_{y})V_{x}^{*}V_{x}.$$
So, we are done with (ii), too.

\end{proof}

\begin{prop}
\label{C-st-bicov}
Suppose that $P$ is a unital semigroup such that itself and the opposite semigroup $P^{\textrm{o}}$ are both left LCM.
Then, the map
$$i_{P}: P\rightarrow B_{P}\times_{\tau}^{\piso} P$$
is a bicovariant partial-isometric representation of $P$ whose range generates the $C^{*}$-algebra $B_{P}\times_{\tau}^{\piso} P$.
Moreover, for every bicovariant partial-isometric representation $V$ of $P$, there is a (unital) representation $V_{*}$
of $B_{P}\times_{\tau}^{\piso} P$ such that $V_{*}\circ i_{P}=V$.
\end{prop}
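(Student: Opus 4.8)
The plan is to deduce the proposition from results already in hand: the existence of the partial-isometric crossed product $(B_P\times_\tau^{\piso}P,i_{B_P},i_P)$ together with its universal property (Proposition \ref{CP-exists}, Definition \ref{NT-CP-df}), the principle that over a left-Nica covariant system a covariant partial-isometric pair has bicovariant partial-isometry part (Remark \ref{LNA-rmk}), and Lemma \ref{rep-Bp}, which converts any bicovariant partial-isometric representation $V$ of $P$ into a covariant partial-isometric representation $(\pi_V,V)$ of $(B_P,P,\tau)$. None of the steps should require genuinely new work.

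First I would record that, since $B_P$ is unital with unit $1_e$ and $i_{B_P}$ is nondegenerate, $i_{B_P}(1_e)$ is the identity of $B_P\times_\tau^{\piso}P$; in particular $\M(B_P)=B_P$, so $i_P$ already maps into $B_P\times_\tau^{\piso}P$ itself, and $\overline{\tau}_x=\tau_x$ with $\overline{\tau}_x(1_{\M(B_P)})=\tau_x(1_e)=1_x$. Using this, I would check that the action $\tau$ is left-Nica covariant in the sense of Definition \ref{L-Nica-act}: one has $\overline{\tau}_x(1)\overline{\tau}_y(1)=1_x1_y$, and by the product formula for the characteristic functions established above this equals $1_z=\overline{\tau}_z(1)$ when $xP\cap yP=zP$ and $0$ when $xP\cap yP=\emptyset$. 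Applying Remark \ref{LNA-rmk} to the covariant pair $(i_{B_P},i_P)$ now shows $i_P$ is bicovariant. For the generation statement I would use Lemma \ref{cov2-lemma}, which gives $i_P(x)i_P(x)^*=\overline{i_{B_P}}(\overline{\tau}_x(1))=i_{B_P}(1_x)$; since $B_P=\clsp\{1_x:x\in P\}$ and $i_{B_P}$ is continuous, $i_{B_P}(B_P)$ lies in the $C^*$-algebra generated by $i_P(P)$, and then the span description of $B_P\times_\tau^{\piso}P$ in Remark \ref{rmk-1} (equivalently condition (iii) of Definition \ref{NT-CP-df}) forces the range of $i_P$ to generate $B_P\times_\tau^{\piso}P$.

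For the final assertion, given a bicovariant partial-isometric representation $V$ of $P$ on a Hilbert space $H$, Lemma \ref{rep-Bp} furnishes a unital representation $\pi_V$ of $B_P$ on $H$ with $\pi_V(1_x)=V_xV_x^*$ such that $(\pi_V,V)$ is a covariant partial-isometric representation of $(B_P,P,\tau)$. The universal property (Definition \ref{NT-CP-df}(ii)) then produces a nondegenerate representation $V_*:=\pi_V\times V$ of $B_P\times_\tau^{\piso}P$ on $H$ with $V_*\circ i_{B_P}=\pi_V$ and $\overline{V_*}\circ i_P=V$; as the crossed product is unital this extension is $V_*$ itself, so $V_*\circ i_P=V$, and $V_*$ is unital because $V_*(i_{B_P}(1_e))=\pi_V(1_e)=V_eV_e^*=\id_H$.

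I do not expect a real obstacle; the proposition is essentially a bookkeeping assembly. The only point that needs a small argument is confirming that $\tau$ is left-Nica covariant, which rests on the unitality normalization $\overline{\tau}_x(1_{\M(B_P)})=1_x$ together with the already-verified multiplication rule for the functions $1_y$; once that is in place, bicovariance of $i_P$ is immediate from Remark \ref{LNA-rmk}, and the universal representation $V_*$ is immediate from Lemma \ref{rep-Bp} and the universal property. A subsidiary care-point is the unitality of $B_P\times_\tau^{\piso}P$ (hence of $V_*$), which is what permits moving freely between $i_P$ regarded as a map into the algebra and as a map into its multiplier algebra.
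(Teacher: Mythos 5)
Your proposal is correct and follows essentially the same route as the paper: the paper also establishes $i_{B_P}(1_y)=i_P(y)i_P(y)^*$ via the covariance relation (which is exactly the content of your appeal to Remark \ref{LNA-rmk} after checking that $\tau$ is left-Nica covariant), deduces generation from the spanning elements $i_{B_P}(1_y)i_P(x)$, and obtains $V_*=\pi_V\times V$ from Lemma \ref{rep-Bp} together with the universal property. The only difference is packaging — you verify left-Nica covariance of $\tau$ explicitly and cite the remark, while the paper does the one-line computation inline — so no substantive divergence.
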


\begin{proof}
To see that $i_{P}$ is a bicovariant partial-isometric representation of $P$, we only need to show that it satisfies the left Nica
covariance condition (\ref{Nica-2}). Since
$$i_{B_{P}}(1_{y})=i_{B_{P}}(\tau_{y}(1_{e}))=i_{P}(y)i_{B_{P}}(1_{e})i_{P}(y)^{*}=i_{P}(y)i_{P}(y)^{*}$$
for all $y\in P$, it follows that $i_{P}$ indeed satisfies (\ref{Nica-2}). Then, as the elements $\{1_{y}:y\in P\}$ generate
the algebra $B_{P}$,  the $C^{*}$-algebra
$B_{P}\times_{\tau}^{\piso} P$ is generated by the elements
$$i_{B_{P}}(1_{y}) i_{P}(x)=i_{P}(y)i_{P}(y)^{*}i_{P}(x),$$
which implies that $i_{P}(P)$ generates $B_{P}\times_{\tau}^{\piso} P$.

Suppose that now $V$ is a bicovariant partial-isometric representation of $P$ on a Hilbert space $H$. Then, by Lemma
\ref{rep-Bp}, there is a covariant partial-isometric representation $(\pi_{V},V)$ of $(B_{P},P,\tau)$ on $H$, such that
$\pi_{V}(1_{x})=V_{x}V_{x}^{*}$ for all $x\in P$. The corresponding (unital) representation
$\pi_{V}\times V$ of $(B_{P}\times_{\tau}^{\piso} P,i_{B_{P}},i_{P})$ on $H$ is the
desired representation $V_{*}$ which satisfies $V_{*}\circ i_{P}=V$.

\end{proof}
So, as we mentioned earlier, we denote the algebra $B_{P}\times_{\tau}^{\piso} P$ by $C^{*}_{\bicov}(P)$, which is universal for
bicovariant partial-isometric representations of $P$.

The following remark contains some point which will be applied in the next corollary and also in the next section.
\begin{remark}
\label{rmk-2}
Suppose that $P$ is a left LCM semigroup. Let $(A,P,\alpha)$ and $(B,P,\beta)$ be dynamical systems, and
$\psi:A\rightarrow B$ a nondegenerate homomorphism such that
$\psi\circ\alpha_{x}=\beta_{x}\circ\psi$ for all $x\in P$. Suppose that $(A\times_{\alpha}^{\piso} P,i)$ and
$(B\times_{\beta}^{\piso} P,j)$ are the partial-isometric crossed products of the systems $(A,P,\alpha)$ and $(B,P,\beta)$,
respectively. Now, one can see that the pair $(j_{B}\circ\psi,j_{P})$ is covariant partial-isometric representation of
$(A,P,\alpha)$ in the algebra $B\times_{\beta}^{\piso} P$. Hence, there is a nondegenerate homomorphism
$$\psi\times P:=[(j_{B}\circ\psi)\times j_{P}]:A\times_{\alpha}^{\piso} P\rightarrow B\times_{\beta}^{\piso} P$$
such that
$$(\psi\times P)\circ i_{A}=j_{B}\circ\psi\ \ \ \textrm{and}\ \ \ \overline{\psi\times P}\circ i_{P}=j_{P}.$$

One can see that if $\psi$ is an isomorphism, so is $\psi\times P$.
\end{remark}

\begin{cor}
\label{bicov-P*S}
Suppose that the unital semigroups $P$, $P^{\textrm{o}}$, $S$, and $S^{\textrm{o}}$ are all left LCM. Then,
\begin{align}
\label{bicov-isom}
C^{*}_{\bicov}(P\times S)\simeq C^{*}_{\bicov}(P)\otimes_{\max} C^{*}_{\bicov}(S).
\end{align}

\end{cor}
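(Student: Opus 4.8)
The plan is to realize the left-hand side as a partial-isometric crossed product of a tensor-product dynamical system and then invoke Theorem \ref{P*S-tensor}. Recall that $C^{*}_{\bicov}(P) = B_{P}\times_{\tau}^{\piso} P$, and likewise for $S$ and for $P\times S$; the last one makes sense because $P\times S$ is left LCM (as noted around (\ref{eq17})) and $(P\times S)^{\textrm{o}}\cong P^{\textrm{o}}\times S^{\textrm{o}}$ is left LCM since $P^{\textrm{o}}$ and $S^{\textrm{o}}$ are. Since $B_{P}$ is unital with unit $1_{e}$, the action $\tau$ consists of (a priori non-unital) endomorphisms of a unital algebra, hence is automatically extendible with $\overline{\tau}_{x}(1) = \tau_{x}(1_{e}) = 1_{x}$; the multiplication rule $1_{x}1_{y} = 1_{z}$ when $xP\cap yP = zP$ and $1_{x}1_{y} = 0$ when $xP\cap yP = \emptyset$ recorded before Lemma \ref{rep-Bp} then says exactly that $\tau$ on $B_{P}$ is left Nica-covariant in the sense of Definition \ref{L-Nica-act}. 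The same holds for the action $\tau$ on $B_{S}$.

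Next I would identify the dynamical system $(B_{P\times S},P\times S,\tau)$ with $(B_{P}\otimes_{\max} B_{S},P\times S,\tau\otimes\tau)$. Because $B_{P}$ and $B_{S}$ are abelian, $B_{P}\otimes_{\max} B_{S}$ is the unique $C^{*}$-tensor product, and under the canonical embedding $\ell^{\infty}(P)\otimes_{\max}\ell^{\infty}(S)\hookrightarrow \ell^{\infty}(P\times S)$ the function $1_{y}\otimes 1_{s}$ goes to $1_{(y,s)}$, the characteristic function of $(y,s)(P\times S) = yP\times sS$. Since $B_{P}\otimes_{\max} B_{S} = \clsp\{1_{y}\otimes 1_{s} : y\in P, s\in S\}$ and $B_{P\times S} = \clsp\{1_{(y,s)} : (y,s)\in P\times S\}$, the assignment $1_{y}\otimes 1_{s}\mapsto 1_{(y,s)}$ extends to an isomorphism $\Phi: B_{P}\otimes_{\max} B_{S}\to B_{P\times S}$. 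On generators $(\tau\otimes\tau)_{(x,r)}(1_{y}\otimes 1_{s}) = 1_{xy}\otimes 1_{rs}$ while $\tau_{(x,r)}(1_{(y,s)}) = 1_{(x,r)(y,s)} = 1_{(xy,rs)}$, so $\Phi\circ(\tau\otimes\tau)_{(x,r)} = \tau_{(x,r)}\circ\Phi$ for all $(x,r)\in P\times S$; thus $\Phi$ is an isomorphism of dynamical systems.

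With these two facts the corollary follows by combining results already available. By Remark \ref{rmk-2} applied to the isomorphism $\Phi$,
$$B_{P\times S}\times_{\tau}^{\piso}(P\times S) \simeq (B_{P}\otimes_{\max} B_{S})\times_{\tau\otimes\tau}^{\piso}(P\times S).$$
Since $\tau$ is left Nica-covariant on $B_{P}$ and on $B_{S}$ and $P,P^{\textrm{o}},S,S^{\textrm{o}}$ are all left LCM, Theorem \ref{P*S-tensor} yields
$$(B_{P}\otimes_{\max} B_{S})\times_{\tau\otimes\tau}^{\piso}(P\times S) \simeq (B_{P}\times_{\tau}^{\piso} P)\otimes_{\max}(B_{S}\times_{\tau}^{\piso} S).$$
Stringing these together and using $C^{*}_{\bicov}(P\times S) = B_{P\times S}\times_{\tau}^{\piso}(P\times S)$, $C^{*}_{\bicov}(P) = B_{P}\times_{\tau}^{\piso} P$, and $C^{*}_{\bicov}(S) = B_{S}\times_{\tau}^{\piso} S$ gives (\ref{bicov-isom}).

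I expect the only genuinely delicate point to be the verification that $\Phi$ is a well-defined isomorphism of $C^{*}$-algebras rather than merely a bijection between spanning sets; this is handled cleanly by viewing all three algebras inside $\ell^{\infty}(P\times S)$ and using that $\otimes_{\max}$ coincides with the spatial tensor product for abelian algebras, so that the norm is preserved. Everything else — the extendibility and left Nica-covariance of $\tau$, and the intertwining $\Phi\circ(\tau\otimes\tau) = \tau\circ\Phi$ — is a routine check on the generators $1_{y}\otimes 1_{s}$.
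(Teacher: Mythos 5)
Your proposal is correct and follows essentially the same route as the paper: identify $B_{P\times S}$ with $B_{P}\otimes_{\max}B_{S}$ via $1_{y}\otimes 1_{s}\mapsto 1_{(y,s)}$ intertwining the actions, transport the crossed product along this isomorphism (Remark \ref{rmk-2}), and then apply Theorem \ref{P*S-tensor}. Your explicit verification that $\tau$ is left Nica-covariant on $B_{P}$ and $B_{S}$ (via $\overline{\tau}_{x}(1)=1_{x}$ and the multiplication rule for the $1_{x}$) is a hypothesis of Theorem \ref{P*S-tensor} that the paper leaves implicit, so including it is a welcome addition rather than a deviation.
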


\begin{proof}
Corresponding to the pairs $(P,P^{\textrm{o}})$ and $(S,S^{\textrm{o}})$ we have the dynamical systems $(B_{P},P,\tau)$ and
$(B_{S},S,\beta)$ along with their associated $C^{*}$-algebras
$$\big(C^{*}_{\bicov}(P)=B_{P}\times_{\tau}^{\piso} P,i_{B_{P}},V\big)$$
and
$$\big(C^{*}_{\bicov}(S)=B_{S}\times_{\beta}^{\piso} S,i_{B_{S}},W\big),$$
respectively. By Theorem \ref{P*S-tensor}, there is an isomorphism
$$\Gamma:\big((B_{P}\otimes B_{S})\times_{\tau\otimes \beta}^{\piso}(P\times S),
i_{(B_{P}\otimes B_{S})},T\big)
\rightarrow (B_{P}\times_{\tau}^{\piso} P) \otimes_{\textrm{max}} (B_{S}\times_{\beta}^{\piso} S)$$
such that
$$\Gamma\big(i_{(B_{P}\otimes B_{S})}(1_{x}\otimes 1_{t})T_{(p,s)}\big)
=[i_{B_{P}}(1_{x})V_{p}]\otimes [i_{B_{S}}(1_{t})W_{s}]$$
for all $x,p\in P$ and $t,s\in S$. Note that
$B_{P}\otimes B_{S}=B_{P}\otimes_{\textrm{max}} B_{S}=B_{P}\otimes_{\textrm{min}} B_{S}$ as the algebras $B_{P}$ and $B_{S}$ are
abelian. Now, since the unital semigroups $P\times S$ and $(P\times S)^{\textrm{o}}$
are both left LCM, we have a dynamical system $(B_{(P\times S)},P\times S,\alpha)$ along with its associated $C^{*}$-algebra
$$\big(C^{*}_{\bicov}(P\times S)=B_{(P\times S)}\times_{\alpha}^{\piso} (P\times S),i_{B_{(P\times S)}},U\big),$$
where the action $\alpha:P\times S\rightarrow \End (B_{(P\times S)})$ is induced by the shift on $\ell^{\infty}(P\times S)$
such that $\alpha_{(p,s)}(1_{(x,t)})=1_{(p,s)(x,t)}=1_{(px,st)}$. Moreover, since there is an isomorphism
$$\psi:(1_{x}\otimes 1_{t})\in (B_{P}\otimes B_{S})\mapsto 1_{(x,t)}\in B_{(P\times S)}$$
which satisfies $\psi\circ (\tau\otimes \beta)_{(p,s)}=\alpha_{(p,s)}\circ \psi$ for all $(p,s)\in P\times S$, we have an isomorphism
$$\Lambda:(B_{P}\otimes B_{S})\times_{\tau\otimes \beta}^{\piso}(P\times S)
\rightarrow B_{(P\times S)}\times_{\alpha}^{\piso} (P\times S)$$
such that
$$\Lambda\circ i_{(B_{P}\otimes B_{S})}=i_{B_{(P\times S)}}\circ\psi\ \ \ \textrm{and}
\ \ \ \Lambda\circ T=U\ \ (\textrm{see Remark \ref{rmk-2}}).$$
Eventually, the composition
$$C^{*}_{\bicov}(P\times S)\stackrel{\Lambda^{-1}}
{\longrightarrow} (B_{P}\otimes B_{S})\times_{\tau\otimes \beta}^{\piso}(P\times S) \stackrel{\Gamma}
{\longrightarrow} C^{*}_{\bicov}(P)\otimes_{\max} C^{*}_{\bicov}(S)$$ of isomorphisms gives the desired isomorphism (\ref{bicov-isom}),
such that
$$U_{(p,s)}\mapsto V_{p}\otimes W_{s}$$
for all $(p,s)\in P\times S$.

\end{proof}

\section{Ideals in tensor products}
\label{sec:ideals-tensor}
Suppose that $P$ is a left LCM semigroup. Let $\alpha$ and $\beta$ be the actions of $P$ on $C^{*}$-algebras $A$ and $B$ by extendible
endomorphisms, respectively. Then, there is an action
$$\alpha\otimes \beta:P\rightarrow \End(A\otimes_{\max} B)$$
of $P$ on the maximal tensor product $A\otimes_{\max} B$ by extendible endomorphisms such that
$$(\alpha\otimes \beta)_{x}=\alpha_{x}\otimes\beta_{x}\ \ \textrm{for all}\ \ x\in P.$$
Note that the extendibility of $\alpha\otimes \beta$ follows by the extendibility of the actions $\alpha$ and $\beta$ (see \cite[Lemma 2.3]{Larsen}).
Therefore, we obtain a dynamical system $(A\otimes_{\max} B,P,\alpha\otimes \beta)$. Let
$(A\otimes_{\max} B)\times_{\alpha\otimes \beta}^{\piso} P$
be the partial-isometric crossed product of $(A\otimes_{\max} B,P,\alpha\otimes \beta)$. Our main goal in this section is to obtain a composition
series
$$0\leq \I_{1}\leq \I_{2} \leq (A\otimes_{\max} B)\times_{\alpha\otimes \beta}^{\piso} P$$
of ideals, and then identify the subquotients
$$\I_{1},\ \ \ \I_{2}/\I_{1},\ \ \textrm{and}\ \ ((A\otimes_{\max} B)\times_{\alpha\otimes \beta}^{\piso} P)/\I_{2}$$
with familiar terms. To do so, we first need to recall the following lemma from \cite{Larsen}:

\begin{lemma}\cite[Lemma 3.2]{Larsen}
\label{Lar-lem}
Suppose that $\alpha$ and $\beta$ are extendible endomorphisms of $C^{*}$-algebras $A$ and $B$, respectively. If $I$ is an extendible $\alpha$-invariant ideal of $A$ and $J$ is an extendible $\beta$-invariant ideal of $B$, then the ideal $I\otimes_{\max} J$
of $A\otimes_{\max} B$ is extendible $\alpha\otimes\beta$-invariant.
\end{lemma}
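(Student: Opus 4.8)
The plan is to deduce the statement from the corresponding one-variable facts together with the behaviour of extendible endomorphisms under maximal tensor products recorded in \cite[Lemma 2.3]{Larsen}. First I would note that $I\otimes_{\max} J$ really is an ideal of $A\otimes_{\max} B$: since the maximal tensor product is exact, $I\otimes_{\max} J$ embeds injectively into $I\otimes_{\max} B$, which in turn is an ideal of $A\otimes_{\max} B$, and an ideal of an ideal of a $C^{*}$-algebra is an ideal of the whole algebra (approximate by an approximate unit of the middle ideal). Next I would observe that, by \cite[Lemma B.31]{RW}, the restrictions $\alpha|_{I}$ and $\beta|_{J}$ induce an endomorphism $(\alpha|_{I})\otimes(\beta|_{J})$ of $I\otimes_{\max} J$; since it agrees with the restriction of $\alpha\otimes\beta$ on the elementary tensors $i\otimes j$ (both send $i\otimes j$ to $\alpha(i)\otimes\beta(j)$) and both maps are continuous, they coincide. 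In particular $(\alpha\otimes\beta)(I\otimes_{\max} J)\subset I\otimes_{\max} J$, so $I\otimes_{\max} J$ is $\alpha\otimes\beta$-invariant, and $(\alpha\otimes\beta)|_{I\otimes_{\max} J}=(\alpha|_{I})\otimes(\beta|_{J})$ is extendible because $\alpha|_{I}$ and $\beta|_{J}$ are, again by \cite[Lemma 2.3]{Larsen}.

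The remaining, genuinely computational point is to check the precise strict-limit identity built into the definition of ``extendible $\alpha\otimes\beta$-invariant'', namely that for an approximate unit $\{w_{\gamma}\}$ of $I\otimes_{\max} J$ the net $(\alpha\otimes\beta)(w_{\gamma})$ converges strictly in $\M(I\otimes_{\max} J)$ to $\overline{\Psi}\big(\overline{(\alpha\otimes\beta)}(1_{\M(A\otimes_{\max} B)})\big)$, where $\Psi\colon A\otimes_{\max} B\to\M(I\otimes_{\max} J)$ is the canonical nondegenerate homomorphism. Here I would use the convenient approximate unit: if $\{u_{\lambda}\}$ and $\{v_{\mu}\}$ are approximate units of $I$ and $J$, then $\{u_{\lambda}\otimes v_{\mu}\}_{(\lambda,\mu)}$ is an approximate unit of $I\otimes_{\max} J$, and $(\alpha\otimes\beta)(u_{\lambda}\otimes v_{\mu})=\alpha(u_{\lambda})\otimes\beta(v_{\mu})$. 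Writing $\psi_{I}\colon A\to\M(I)$ and $\psi_{J}\colon B\to\M(J)$ for the canonical maps, the hypotheses give $\alpha(u_{\lambda})\to\overline{\psi_{I}}(\overline{\alpha}(1_{\M(A)}))$ strictly in $\M(I)$ and $\beta(v_{\mu})\to\overline{\psi_{J}}(\overline{\beta}(1_{\M(B)}))$ strictly in $\M(J)$. Since these nets are norm-bounded and $\|a\otimes b\|\le\|a\|\,\|b\|$, an elementary estimate on elementary tensors shows that $\alpha(u_{\lambda})\otimes\beta(v_{\mu})$ converges strictly in $\M(I\otimes_{\max} J)$ to the image of $\overline{\psi_{I}}(\overline{\alpha}(1_{\M(A)}))\otimes\overline{\psi_{J}}(\overline{\beta}(1_{\M(B)}))$ under the canonical map $\M(I)\otimes_{\max}\M(J)\to\M(I\otimes_{\max} J)$. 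Finally, using \cite[Lemma 2.3]{Larsen} (that $\overline{(\alpha\otimes\beta)}$ restricts to $\overline{\alpha}\otimes\overline{\beta}$ on the image of $\M(A)\otimes_{\max}\M(B)$, and that $1_{\M(A\otimes_{\max} B)}$ is the image of $1_{\M(A)}\otimes 1_{\M(B)}$) together with the factorisation of $\Psi$ through $\psi_{I}\otimes\psi_{J}$, a short diagram chase identifies this limit with $\overline{\Psi}\big(\overline{(\alpha\otimes\beta)}(1_{\M(A\otimes_{\max} B)})\big)$, which completes the argument.

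I expect the main obstacle to be precisely this multiplier-algebra bookkeeping: one must be careful that strict convergence of norm-bounded nets is preserved under tensoring and passes correctly into $\M(I\otimes_{\max} J)$, and that the canonical maps $\psi_{I}$, $\psi_{J}$, $\Psi$, the inclusions $I\hookrightarrow A$, $J\hookrightarrow B$, and the extensions $\overline{\alpha}$, $\overline{\beta}$, $\overline{\alpha\otimes\beta}$ all sit inside one commuting diagram. None of this is conceptually difficult, but it is where all the attention is needed; the invariance and extendibility statements are purely formal once \cite[Lemma 2.3]{Larsen} is invoked.
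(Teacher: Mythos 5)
The paper does not actually prove this lemma: it is quoted verbatim from \cite[Lemma 3.2]{Larsen} and used as a black box, so there is no internal proof to compare yours against. Your reconstruction is correct and is essentially the standard (and, as far as I can tell, Larsen's) argument: reduce everything to the one-variable hypotheses via the product approximate unit $\{u_{\lambda}\otimes v_{\mu}\}$, identify $(\alpha\otimes\beta)|_{I\otimes_{\max}J}$ with $(\alpha|_{I})\otimes(\beta|_{J})$ on elementary tensors, and check the strict-limit identity by evaluating both candidate multipliers on $i\otimes j$. The one place where your wording is loose is the opening claim that ``the maximal tensor product is exact'': exactness in the usual sense is a property of the \emph{minimal} tensor product, and what you actually need is the specific fact that $-\otimes_{\max}B$ preserves short exact sequences of ideals, i.e.\ that the inclusions $I\otimes_{\max}J\hookrightarrow I\otimes_{\max}B\hookrightarrow A\otimes_{\max}B$ are injective; this is \cite[Proposition B.30]{RW}, which the paper itself invokes for exactly this purpose in Remark \ref{rmk-3}, and it is also what justifies your later tacit identification of the two completions of $I\odot J$ when you argue that the two endomorphisms ``coincide by continuity.'' The final ``diagram chase'' you defer is genuinely short: $\overline{\Psi}\big(\overline{(\alpha\otimes\beta)}(1)\big)$ acts on $i\otimes j$ as $\big(\overline{\alpha}(1)i\big)\otimes\big(\overline{\beta}(1)j\big)$ by the formula $\overline{\alpha\otimes\beta}\circ(\overline{k_{A}}\otimes_{\max}\overline{k_{B}})=(\overline{k_{A}}\circ\overline{\alpha})\otimes_{\max}(\overline{k_{B}}\circ\overline{\beta})$ (the identity recorded as (\ref{eq24}) in the paper), which matches the limit of $\big(\alpha(u_{\lambda})i\big)\otimes\big(\beta(v_{\mu})j\big)$ supplied by the extendible-invariance hypotheses on $I$ and $J$.
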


\begin{remark}
\label{rmk-3}
It follows by Lemma \ref{Lar-lem} that if $(A,P,\alpha)$ and $(B,P,\beta)$ are dynamical systems, and $I$ is an extendible $\alpha_{x}$-invariant
ideal of $A$ and $J$ is an extendible $\beta_{x}$-invariant ideal of $B$ for every $x\in P$, then $I\otimes_{\max} J$ is an extendible
$(\alpha\otimes\beta)_{x}$-invariant ideal of $A\otimes_{\max} B$ for all $x\in P$. Therefore, by Theorem \ref{piso-ext-seq},
the crossed product $(I\otimes_{\max} J)\times_{\alpha\otimes \beta}^{\piso} P$ sits in the algebra
$(A\otimes_{\max} B)\times_{\alpha\otimes \beta}^{\piso} P$ as an ideal (this will be the ideal $\I_{1}$ shortly later). As an
application of this fact, we observe that, by \cite[Proposition B. 30]{RW}, the short exact sequence
$$0 \longrightarrow J \stackrel{}{\longrightarrow} B \stackrel{q^{J}}{\longrightarrow} B/J \longrightarrow 0$$
gives rise to the short exact sequence
\begin{align}
\label{ext-tensor}
0 \longrightarrow A\otimes_{\max} J \stackrel{}{\longrightarrow} A\otimes_{\max} B
\stackrel{\id_{A}\otimes_{\max} q^{J}}{\longrightarrow} A\otimes_{\max} B/J \longrightarrow 0,
\end{align}
where $A\otimes_{\max} J$ is an extendible $(\alpha\otimes\beta)_{x}$-invariant ideal of $A\otimes_{\max} B$ for all $x\in P$.
Thus, (\ref{ext-tensor}) itself by Theorem \ref{piso-ext-seq} gives rise to the following short exact sequence
$$0 \longrightarrow (A\otimes_{\max} J)\times_{\alpha\otimes \beta}^{\piso} P
\stackrel{\mu}{\longrightarrow} \big((A\otimes_{\max} B)\times_{\alpha\otimes \beta}^{\piso} P, i\big)
\stackrel{\phi}{\longrightarrow} \big((A\otimes_{\max} B/J)\times_{\alpha\otimes \tilde{\beta}}^{\piso} P, j\big),$$
where $\tilde{\beta}:P\rightarrow \End (B/J)$ is the (extendible) action induced by $\beta$, and
the surjective homomorphism $\phi$ is indeed the homomorphism $(\id_{A}\otimes_{\max} q^{J})\times P$ (see Remark \ref{rmk-2})
such that
$$[(\id_{A}\otimes_{\max} q^{J})\times P]\circ i_{(A\otimes_{\max} B)}=j_{(A\otimes_{\max} B/J)}\circ(\id_{A}\otimes_{\max} q^{J})\ \ \
\textrm{and}\ \ \ \overline{[(\id_{A}\otimes_{\max} q^{J})\times P]}\circ i_{P}=j_{P}.$$
\end{remark}

In the following proposition and theorem, for the maximal tensor product between the $C^{*}$-algebras involved, we simply write $\otimes$
for convenience.

\begin{prop}
\label{large-diag.}
Let $(A,P,\alpha)$ and $(B,P,\beta)$ be dynamical systems, and $I$ an extendible $\alpha_{x}$-invariant ideal
of $A$ and $J$ an extendible $\beta_{x}$-invariant ideal of $B$ for every $x\in P$. Assume that
$\tilde{\alpha}:P\rightarrow \End (A/I)$ and $\tilde{\beta}:P\rightarrow \End (B/J)$ are the actions induced by $\alpha$ and
$\beta$, respectively. Then, the following diagram

\begin{center}
\begin{equation}\label{L-comm-diag}
\begin{tikzpicture}
  \matrix(m)[matrix of math nodes,row sep=2em,column sep=2em,minimum width=2em]
  {\empty & 0 & 0 & 0 & \empty\\
  0 & (I\otimes J)\times_{\alpha\otimes \beta}^{\piso} P & (I\otimes B)\times_{\alpha\otimes \beta}^{\piso} P
  & (I\otimes B/J)\times_{\alpha\otimes \tilde{\beta}}^{\piso} P & 0\\
  0 & (A\otimes J)\times_{\alpha\otimes \beta}^{\piso} P & (A\otimes B)\times_{\alpha\otimes \beta}^{\piso} P
  & (A\otimes B/J)\times_{\alpha\otimes \tilde{\beta}}^{\piso} P & 0\\
     0 & (A/I\otimes J)\times_{\tilde{\alpha}\otimes \beta}^{\piso} P & (A/I\otimes B)\times_{\tilde{\alpha}\otimes \beta}^{\piso} P
  & (A/I\otimes B/J)\times_{\tilde{\alpha}\otimes \tilde{\beta}}^{\piso} P & 0\\
   \empty & 0 & 0 & 0 & \empty\\};
  \path[-stealth]
    (m-1-2) edge node [above] {} (m-2-2)
    (m-1-3) edge node [above] {} (m-2-3)
    (m-1-4) edge node [above] {} (m-2-4)
    (m-2-1) edge node [above] {} (m-2-2)
    (m-2-2) edge node [above] {} (m-2-3) edge node [left] {} (m-3-2)
    (m-2-3) edge node [above] {$\phi_{1}$} (m-2-4) edge node [left] {} (m-3-3)
    (m-2-4) edge node [] {} (m-2-5) edge node [left] {} (m-3-4)
    (m-3-1) edge node [] {} (m-3-2)
    (m-3-2) edge node [above] {} (m-3-3)  edge node [right] {$\varphi_{1}$} (m-4-2)
    (m-3-3) edge node [above] {$q\times P$} (m-4-4) edge node [above] {$\phi_{2}$} (m-3-4) edge node [right] {$\varphi_{2}$} (m-4-3)
    (m-3-4) edge node [] {} (m-3-5) edge node [right] {$\varphi_{3}$} (m-4-4)
    (m-4-1) edge node [] {} (m-4-2)
    (m-4-2) edge node [above] {} (m-4-3) edge node [left] {} (m-5-2)
    (m-4-3) edge node [above] {$\phi_{3}$} (m-4-4) edge node [left] {} (m-5-3)
    (m-4-4) edge node [] {} (m-4-5) edge node [left] {} (m-5-4);
\end{tikzpicture}
\end{equation}
\end{center}
commutes, where
$$\phi_{1}:=(\id_{I}\otimes q^{J})\times P,\ \ \phi_{2}:=(\id_{A}\otimes q^{J})\times P,\ \ \phi_{3}:=(\id_{A/I}\otimes q^{J})\times P,$$

$$\varphi_{1}:=(q^{I}\otimes \id_{J})\times P,\ \ \varphi_{2}:=(q^{I}\otimes \id_{B})\times P,\ \textrm{and}\ \
\varphi_{3}:=(q^{I}\otimes \id_{B/J})\times P.$$
Also, there is a surjective homomorphism $q:A\otimes B\rightarrow (A/I)\otimes (B/J)$ which intertwines
the actions $\alpha\otimes \beta$ and $\tilde{\alpha}\otimes \tilde{\beta}$, and therefore, we have a homomorphism $q\times P$
of $(A\otimes B)\times_{\alpha\otimes \beta}^{\piso} P$ onto $(A/I\otimes B/J)\times_{\tilde{\alpha}\otimes \tilde{\beta}}^{\piso} P$
induced by $q$. Moreover, we have
\begin{align}
\label{ker-q.P}
\ker (q\times P)=(A\otimes J)\times_{\alpha\otimes \beta}^{\piso} P+(I\otimes B)\times_{\alpha\otimes \beta}^{\piso} P.
\end{align}

\end{prop}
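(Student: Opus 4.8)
The plan is to obtain the whole diagram by applying the partial-isometric crossed product construction to the corresponding picture of $C^*$-algebras, and then to read off (\ref{ker-q.P}) from it by a short diagram chase. First I would set up the underlying $C^*$-algebra grid: tensoring the exact sequences $0\to I\to A\to A/I\to 0$ and $0\to J\to B\to B/J\to 0$ against the various algebras gives, as in Remark \ref{rmk-3}, the nine-term square of maximal tensor products with exact rows and columns, and every small square of it commutes because $q^{I}\otimes\id$ and $\id\otimes q^{J}$ commute. Each ideal occurring as a term of a row or column — for instance $I\otimes J$, $A\otimes J$, $I\otimes B$ inside $A\otimes B$, and the analogues with $A/I$, $B/J$ in place of $A$, $B$ — is extendible invariant for the pertinent action by Lemma \ref{Lar-lem}, and each quotient map in the grid intertwines the appropriate pair of actions. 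Hence Theorem \ref{piso-ext-seq} supplies the (unlabeled) monomorphisms of the diagram onto their image ideals together with exactness of all rows and columns of (\ref{L-comm-diag}), while Remark \ref{rmk-2} applied to the quotient maps $\id\otimes q^{J}$ and $q^{I}\otimes\id$ supplies $\phi_{1},\phi_{2},\phi_{3}$ and $\varphi_{1},\varphi_{2},\varphi_{3}$. Commutativity of (\ref{L-comm-diag}) then follows from commutativity of the $C^*$-algebra grid together with the functoriality identity $(\psi'\circ\psi)\times P=(\psi'\times P)\circ(\psi\times P)$, which is immediate from the uniqueness in the universal property; and since $q:=q^{I}\otimes q^{J}$ is surjective, intertwines $\alpha\otimes\beta$ with $\tilde\alpha\otimes\tilde\beta$, and factors as $(q^{I}\otimes\id_{B/J})\circ(\id_{A}\otimes q^{J})=(\id_{A/I}\otimes q^{J})\circ(q^{I}\otimes\id_{B})$, the induced surjection $q\times P$ (again via Remark \ref{rmk-2}) satisfies $q\times P=\varphi_{3}\circ\phi_{2}=\phi_{3}\circ\varphi_{2}$.

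It remains to prove (\ref{ker-q.P}). Here I would use four facts: $\ker\phi_{2}=(A\otimes J)\times_{\alpha\otimes\beta}^{\piso}P$ by Theorem \ref{piso-ext-seq} applied to the ideal $A\otimes J$ of $A\otimes B$; $\ker\varphi_{3}=(I\otimes B/J)\times_{\alpha\otimes\tilde\beta}^{\piso}P$ by exactness of the fourth column of (\ref{L-comm-diag}); the surjectivity of $\phi_{1}=(\id_{I}\otimes q^{J})\times P$ (it is $\psi\times P$ for a surjection $\psi$, hence surjective), which together with commutativity of the top-right square of (\ref{L-comm-diag}) gives $\phi_{2}\big((I\otimes B)\times_{\alpha\otimes\beta}^{\piso}P\big)=(I\otimes B/J)\times_{\alpha\otimes\tilde\beta}^{\piso}P=\ker\varphi_{3}$; and the factorization $q\times P=\varphi_{3}\circ\phi_{2}$. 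From these, $(A\otimes J)\times^{\piso}P=\ker\phi_{2}\subseteq\ker(q\times P)$ and $(I\otimes B)\times^{\piso}P\subseteq\phi_{2}^{-1}(\ker\varphi_{3})=\ker(q\times P)$, which gives the inclusion $\supseteq$ in (\ref{ker-q.P}). Conversely, if $\xi\in\ker(q\times P)$ then $\phi_{2}(\xi)\in\ker\varphi_{3}=\phi_{2}\big((I\otimes B)\times^{\piso}P\big)$, so $\phi_{2}(\xi)=\phi_{2}(\eta)$ for some $\eta\in(I\otimes B)\times^{\piso}P$, whence $\xi-\eta\in\ker\phi_{2}=(A\otimes J)\times^{\piso}P$ and $\xi=\eta+(\xi-\eta)$ lies in the stated sum; this proves (\ref{ker-q.P}).

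The bulk of the effort is bookkeeping in the first paragraph: checking the intertwining relations for all the quotient maps and, via Lemma \ref{Lar-lem}, the extendible invariance of every tensor-product ideal involved, so that Theorem \ref{piso-ext-seq} and Remark \ref{rmk-2} apply uniformly and all rows and columns of (\ref{L-comm-diag}) are short exact. The only step of the chase itself that requires a moment's thought is the identity $\phi_{2}\big((I\otimes B)\times^{\piso}P\big)=\ker\varphi_{3}$, which rests on surjectivity of $\phi_{1}$ and the commuting square it fits into; everything else in the kernel computation is formal.
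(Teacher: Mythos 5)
Your proposal is correct, and the first half (building the grid from Lemma \ref{Lar-lem}, Theorem \ref{piso-ext-seq} and Remark \ref{rmk-2}, with commutativity checked on generators via functoriality of $\psi\mapsto\psi\times P$) is essentially what the paper does. Where you genuinely diverge is in the proof of (\ref{ker-q.P}): the paper establishes the inclusion $\ker(q\times P)\subseteq (A\otimes J)\times^{\piso}P+(I\otimes B)\times^{\piso}P$ by a representation-theoretic lifting argument --- it picks a nondegenerate representation $\pi$ of $(A\otimes B)\times^{\piso}P$ whose kernel is exactly that sum, builds from $\pi\circ i_{(A\otimes B)}$ and $\overline{\pi}\circ i_P$ a covariant partial-isometric pair of the quotient system $(A/I\otimes B/J,P,\tilde\alpha\otimes\tilde\beta)$, and factors $\pi$ through $q\times P$ via the universal property --- whereas you obtain both inclusions by a purely formal diagram chase from $\ker\phi_2=(A\otimes J)\times^{\piso}P$, $\ker\varphi_3=\phi_2\big((I\otimes B)\times^{\piso}P\big)$ (using surjectivity of $\phi_1$ and the top-right square) and the factorization $q\times P=\varphi_3\circ\phi_2$. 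Your route is more economical in that it recycles the exactness statements of Theorem \ref{piso-ext-seq} already needed to assemble the diagram and avoids constructing a new covariant pair; the paper's route is independent of the commutativity of the top-right square and of the identification of $\ker\varphi_3$ with the image of $\phi_2$ on the ideal, so it would survive even if one only had the middle row and the map $q\times P$. Both arguments are complete, and your identity $\phi_2\big((I\otimes B)\times^{\piso}P\big)=\ker\varphi_3$ is correctly justified.
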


\begin{proof}
First of all, in the diagram, each row as well as each column is obtained by a similar discussion to Remark \ref{rmk-3},
and hence, it is exact.

Next, for the quotient maps $q^{I}:A\rightarrow A/I$ and $q^{J}:B\rightarrow B/J$, by \cite[Lemma B. 31]{RW}, there is a
homomorphism $q^{I}\otimes q^{J}:A\otimes B\rightarrow (A/I)\otimes (B/J)$, which we denote it by $q$, such that
$$q(a\otimes b)=(q^{I}\otimes q^{J})(a\otimes b)=q^{I}(a)\otimes q^{J}(b)=(a+I)\otimes (b+J)$$
for all $a\in A$ and $b\in B$. It is obviously surjective. Moreover,
\begin{eqnarray}
\label{eq26}
\begin{array}{rcl}
q((\alpha\otimes \beta)_{x}(a\otimes b))&=&q((\alpha_{x}\otimes \beta_{x})(a\otimes b))\\
&=&q(\alpha_{x}(a)\otimes \beta_{x}(b))\\
&=&(\alpha_{x}(a)+I)\otimes (\beta_{x}(b)+J)\\
&=&\tilde{\alpha}_{x}(a+I)\otimes \tilde{\beta}_{x}(b+J)\\
&=&(\tilde{\alpha}_{x}\otimes \tilde{\beta}_{x})((a+I)\otimes (b+J))\\
&=&(\tilde{\alpha}\otimes \tilde{\beta})_{x}(q(a\otimes b))
\end{array}
\end{eqnarray}
for all $x\in P$. Therefore, by Remark \ref{rmk-2}, there is a (nondegenerate) homomorphism
$$q\times P:\big((A\otimes B)\times_{\alpha\otimes \beta}^{\piso} P, i\big)\rightarrow
\big((A/I\otimes B/J)\times_{\tilde{\alpha}\otimes \tilde{\beta}}^{\piso} P, k\big)$$
such that
$$(q\times P)\circ i_{(A\otimes B)}=k_{(A/I\otimes B/J)}\circ q\ \ \
\textrm{and}\ \ \ \overline{q\times P}\circ i_{P}=k_{P}.$$
One can easily see that as $q$ is surjective, so is $q\times P$.

Now, an inspection on spanning elements shows that the diagram commutes.

Finally, to see (\ref{ker-q.P}), we only show that
$$\ker (q\times P)\subset(A\otimes J)\times_{\alpha\otimes \beta}^{\piso} P+(I\otimes B)\times_{\alpha\otimes \beta}^{\piso} P$$
as the other inclusion can be verified easily. To do so, take a nondegenerate representation
$$\pi:(A\otimes B)\times_{\alpha\otimes \beta}^{\piso} P\rightarrow B(H)$$
with
$$\ker \pi=(A\otimes J)\times_{\alpha\otimes \beta}^{\piso} P+(I\otimes B)\times_{\alpha\otimes \beta}^{\piso} P.$$
Then, define a map $\rho:(A/I\otimes B/J)\rightarrow B(H)$ by
$$\rho(q(\xi))=\pi(i_{(A\otimes B)}(\xi))$$ for all $\xi\in (A\otimes B)$. Since
$$(A\otimes J)+(I\otimes B)=\ker q\subset \ker (\pi\circ i_{(A\otimes B)}),$$
it follows that the map $\rho$ is well-defined, which is actually a nondegenerate representation.
Also, the composition
$$P \stackrel{i_{P}}{\longrightarrow} \M\big((A\otimes B)\times_{\alpha\otimes \beta}^{\piso} P\big)
\stackrel{\overline{\pi}}{\longrightarrow} B(H)$$
gives a (right) Nica partial-isometric representation $W:P\rightarrow B(H)$. Now, by applying the covariance equations
of the pair $(i_{(A\otimes B)},i_{P})$ and (\ref{eq26}), one can see that the pair $(\rho,W)$ is a covariant
partial-isometric representation of $(A/I\otimes B/J,P,\tilde{\alpha}\otimes \tilde{\beta})$ on $H$. The corresponding
representation $\rho\times W$ lifts to $\pi$, which means that
$$(\rho\times W)\circ (q\times P)=\pi,$$
and therefore, we have
$$\ker (q\times P)\subset
\ker \pi=(A\otimes J)\times_{\alpha\otimes \beta}^{\piso} P+(I\otimes B)\times_{\alpha\otimes \beta}^{\piso} P.$$
Thus, the equation (\ref{ker-q.P}) holds.

\end{proof}

\begin{theorem}
\label{posit-series}
Let $(A,P,\alpha)$ and $(B,P,\beta)$ be dynamical systems, and $I$ an extendible $\alpha_{x}$-invariant ideal
of $A$ and $J$ an extendible $\beta_{x}$-invariant ideal of $B$ for every $x\in P$. Assume that
$\tilde{\alpha}:P\rightarrow \End (A/I)$ and $\tilde{\beta}:P\rightarrow \End (B/J)$ are the actions induced by $\alpha$ and
$\beta$, respectively. Then, there is a composition series
$$0\leq \I_{1}\leq \I_{2} \leq (A\otimes B)\times_{\alpha\otimes \beta}^{\piso} P$$
of ideals, such that:
\begin{itemize}
\item[(i)] the ideal $\I_{1}$ is (isomorphic to) $(I\otimes J)\times_{\alpha\otimes \beta}^{\piso} P$;
\item[(ii)] $\I_{2}/\I_{1}\simeq (A/I\otimes J)\times_{\tilde{\alpha}\otimes \beta}^{\piso} P\oplus
(I\otimes B/J)\times_{\alpha\otimes \tilde{\beta}}^{\piso} P$ ;
\item[(iii)] the surjection $q\times P$ induces an isomorphism of
$\big((A\otimes B)\times_{\alpha\otimes \beta}^{\piso} P\big)/\I_{2}$ onto
$(A/I\otimes B/J)\times_{\tilde{\alpha}\otimes \tilde{\beta}}^{\piso} P$.
\end{itemize}

\end{theorem}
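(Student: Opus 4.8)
The plan is to put $\mathcal{C}:=(A\otimes B)\times_{\alpha\otimes \beta}^{\piso} P$, to take $\I_1$ to be the ideal of $\mathcal{C}$ determined (via Theorem \ref{piso-ext-seq}) by the extendible-invariant ideal $I\otimes J$ of $A\otimes B$, to take $\I_2:=\ker(q\times P)$, and then to read off the three subquotients from the commuting diagram (\ref{L-comm-diag}) of Proposition \ref{large-diag.}. By Remark \ref{rmk-3} (via Lemma \ref{Lar-lem}) the ideals $I\otimes J$, $A\otimes J$ and $I\otimes B$ of $A\otimes B$ are all extendible $(\alpha\otimes\beta)_x$-invariant for every $x\in P$, so Theorem \ref{piso-ext-seq} realises their partial-isometric crossed products as ideals
$$\I_1\simeq (I\otimes J)\times_{\alpha\otimes\beta}^{\piso}P,\qquad \mathcal{K}_1\simeq (A\otimes J)\times_{\alpha\otimes\beta}^{\piso}P,\qquad \mathcal{K}_2\simeq (I\otimes B)\times_{\alpha\otimes\beta}^{\piso}P$$
of $\mathcal{C}$, with $\I_1=\clsp\{i_P(s)^{*}i_{A\otimes B}(\xi)i_P(t):\xi\in I\otimes J,\ s,t\in P\}$ and the analogous descriptions of $\mathcal{K}_1$ (with $\xi\in A\otimes J$) and $\mathcal{K}_2$ (with $\xi\in I\otimes B$). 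From these spanning descriptions $\I_1\subseteq\mathcal{K}_1$ and $\I_1\subseteq\mathcal{K}_2$, and by (\ref{ker-q.P}) we have $\I_2=\ker(q\times P)=\mathcal{K}_1+\mathcal{K}_2$ (a closed ideal, being a sum of two closed ideals); thus $0\le\I_1\le\I_2\le\mathcal{C}$ is a composition series. Part (i) holds by construction, and (iii) is immediate since $q\times P$ is surjective (Proposition \ref{large-diag.}), whence $\mathcal{C}/\I_2=\mathcal{C}/\ker(q\times P)\simeq (A/I\otimes B/J)\times_{\tilde{\alpha}\otimes\tilde{\beta}}^{\piso}P$.

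For (ii) the crucial point is the identity $\mathcal{K}_1\cap\mathcal{K}_2=\I_1$; the inclusion $\supseteq$ is clear. For $\subseteq$, recall from the third row of (\ref{L-comm-diag}) that $\mathcal{K}_1=\ker\phi_2$ with $\phi_2=(\id_A\otimes q^J)\times P$, and that, by commutativity of the relevant square of the diagram, the restriction of $\phi_2$ to $\mathcal{K}_2\simeq(I\otimes B)\times_{\alpha\otimes\beta}^{\piso}P$ is identified with $\phi_1=(\id_I\otimes q^J)\times P$, whose kernel --- by exactness of the second row --- is $(I\otimes J)\times_{\alpha\otimes\beta}^{\piso}P$, sitting inside $\mathcal{K}_2$ as $\I_1$. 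Hence $\mathcal{K}_1\cap\mathcal{K}_2=\ker\phi_2\cap\mathcal{K}_2=\ker\!\big(\phi_2|_{\mathcal{K}_2}\big)=\I_1$. Now pass to $\I_2/\I_1=(\mathcal{K}_1+\mathcal{K}_2)/\I_1$ and let $\overline{\mathcal{K}_1}$, $\overline{\mathcal{K}_2}$ be the images of $\mathcal{K}_1$, $\mathcal{K}_2$ there; these are ideals of $\I_2/\I_1$ with $\overline{\mathcal{K}_1}+\overline{\mathcal{K}_2}=\I_2/\I_1$, and $\overline{\mathcal{K}_1}\cap\overline{\mathcal{K}_2}=0$, since any $x\in\mathcal{K}_1$ whose class lies in $\overline{\mathcal{K}_2}$ must --- using $\I_1\subseteq\mathcal{K}_2$ --- belong to $\mathcal{K}_1\cap\mathcal{K}_2=\I_1$. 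As two closed ideals of a $C^*$-algebra with zero intersection are mutually orthogonal, the canonical map $\overline{\mathcal{K}_1}\oplus\overline{\mathcal{K}_2}\to\I_2/\I_1$ is a $*$-isomorphism. Finally $\overline{\mathcal{K}_1}\simeq\mathcal{K}_1/\I_1\simeq(A/I\otimes J)\times_{\tilde{\alpha}\otimes\beta}^{\piso}P$ by exactness of the second column, and $\overline{\mathcal{K}_2}\simeq\mathcal{K}_2/\I_1\simeq(I\otimes B/J)\times_{\alpha\otimes\tilde{\beta}}^{\piso}P$ by exactness of the second row; combining these yields (ii).

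The step I expect to demand the most care is the identity $\mathcal{K}_1\cap\mathcal{K}_2=\I_1$: it rests on transporting $\phi_2$ along the isomorphism $\mathcal{K}_2\simeq(I\otimes B)\times_{\alpha\otimes\beta}^{\piso}P$ and matching it with $\phi_1$, i.e. on the commutativity of (\ref{L-comm-diag}) and on keeping the several ``realisation as an ideal'' identifications mutually consistent. Everything afterwards --- the isomorphism theorems for $C^*$-algebras and the standard fact that complementary ideals with trivial intersection split off as a direct summand --- is routine.
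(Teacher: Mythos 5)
Your proposal is correct and follows essentially the same route as the paper: same $\I_{1}$, the same $\I_{2}=(A\otimes J)\times_{\alpha\otimes\beta}^{\piso}P+(I\otimes B)\times_{\alpha\otimes\beta}^{\piso}P=\ker(q\times P)$, and the same use of Proposition \ref{large-diag.} for parts (ii) and (iii). The only difference is one of detail: the paper simply asserts the key identity $\mathcal{K}_{1}\cap\mathcal{K}_{2}=\I_{1}$ and writes the resulting direct-sum decomposition of $\I_{2}/\I_{1}$ without comment, whereas you justify it via the commutativity and exactness of diagram (\ref{L-comm-diag}) — a worthwhile addition, but not a different proof.
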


\begin{proof}
For (i), as we mentioned in Remark \ref{rmk-3}, $I\otimes J$ is an extendible
$(\alpha\otimes\beta)_{x}$-invariant ideal of $A\otimes B$ for all $x\in P$. Therefore, by Theorem \ref{piso-ext-seq},
the crossed product $(I\otimes J)\times_{\alpha\otimes \beta}^{\piso} P$ sits in the algebra
$(A\otimes B)\times_{\alpha\otimes \beta}^{\piso} P$ as an ideal, which we denote it by $\I_{1}$.

To get (ii), we first define
$$\I_{2}:=(A\otimes J)\times_{\alpha\otimes \beta}^{\piso} P+ (I\otimes B)\times_{\alpha\otimes \beta}^{\piso} P,$$
which is an ideal of $(A\otimes B)\times_{\alpha\otimes \beta}^{\piso} P$ as each summand is. Note that we have
$$[(A\otimes J)\times_{\alpha\otimes \beta}^{\piso} P] \cap [(I\otimes B)\times_{\alpha\otimes \beta}^{\piso} P]
=(I\otimes J)\times_{\alpha\otimes \beta}^{\piso} P.$$
So, it follows that (see diagram (\ref{L-comm-diag}))
\begin{eqnarray*}
\begin{array}{rcl}
\I_{2}/\I_{1}&=&\big[(A\otimes J)\times_{\alpha\otimes \beta}^{\piso} P+ (I\otimes B)\times_{\alpha\otimes \beta}^{\piso} P\big]
/(I\otimes J)\times_{\alpha\otimes \beta}^{\piso} P\\
&=&\big[(A\otimes J)\times_{\alpha\otimes \beta}^{\piso} P\big]/[(I\otimes J)\times_{\alpha\otimes \beta}^{\piso} P]
\oplus \big[(I\otimes B)\times_{\alpha\otimes \beta}^{\piso} P\big]/[(I\otimes J)\times_{\alpha\otimes \beta}^{\piso} P]\\
&\simeq&[(A\otimes J)/(I\otimes J)]\times_{\widetilde{\alpha\otimes \beta}}^{\piso} P \oplus
[(I\otimes B)/(I\otimes J)]\times_{\widetilde{\alpha\otimes \beta}}^{\piso} P\\
&\simeq& (A/I\otimes J)\times_{\tilde{\alpha}\otimes \beta}^{\piso} P \oplus
(I\otimes B/J)\times_{\alpha\otimes \tilde{\beta}}^{\piso} P.
\end{array}
\end{eqnarray*}
Finally, for (iii), we recall from Proposition (\ref{large-diag.}) that we have a surjective homomorphism
$$q\times P: (A\otimes B)\times_{\alpha\otimes \beta}^{\piso} P \rightarrow
(A/I\otimes B/J)\times_{\tilde{\alpha}\otimes \tilde{\beta}}^{\piso} P $$
with
$$\ker (q\times P)=(A\otimes J)\times_{\alpha\otimes \beta}^{\piso} P+(I\otimes B)\times_{\alpha\otimes \beta}^{\piso} P=\I_{2}.$$
Therefore, we have
\begin{eqnarray*}
\begin{array}{rcl}
\big((A\otimes B)\times_{\alpha\otimes \beta}^{\piso} P\big)/\I_{2}&=&\big((A\otimes B)\times_{\alpha\otimes \beta}^{\piso} P\big)/\ker (q\times P)\\
&\simeq& (A/I\otimes B/J)\times_{\tilde{\alpha}\otimes \tilde{\beta}}^{\piso} P.
\end{array}
\end{eqnarray*}

\end{proof}

\section{The application}
\label{sec:apply}
In this section, as an application, we consider the dynamical systems $(C^{*}(G_{p,q}),\N^{2},\beta)$ studied in \cite{LPR}, where $\N^{2}$
is the positive cone of the abelian lattice-ordered group $\Z^{2}$. Let $p$ and $q$ be distinct odd primes, and consider the subgroup
$$G_{p,q}:=\{n p^{-k} q^{-l}: n,k,l \in \Z\}/\Z$$
of $\Q/\Z$. There is an averaging type action $\beta$ of $\N^{2}$ on the group $C^*$-algebra $C^{*}(G_{p,q})$ by endomorphisms, such that
on the canonical generating unitaries $\{u_{r}: r\in G_{p,q}\}$ of $C^{*}(G_{p,q})$ we have
\begin{align}
\label{beta}
\beta_{(m,n)}(u_{r})=\frac{1}{p^{m} q^{n}} \sum_{\{s\in G_{p,q}: p^{m}q^{n}s=r\}} u_{s}
\end{align}
for all $(m,n)\in \N^{2}$. Let $\Z_{p}$ be the compact topological ring of $p$-adic integers (similarly for $\Z_{q}$).
See in \cite[Lemma 1.1]{LPR} that by the Fourier transform $C^{*}(G_{p,q}) \simeq C(\Z_{p}\times\Z_{q})$, the action $\beta$ corresponds
to the action $\alpha$ of $\N^{2}$ on $C(\Z_{p}\times\Z_{q})$ by endomorphisms, such that
\begin{align}\label{alpha}
\alpha_{(m,n)}(f)(x,y)=
   \begin{cases}
      f(p^{-m}q^{-n}x, p^{-m}q^{-n}y) &\textrm{if}\empty\ \text{$x\in p^{m}q^{n}\Z_{p}$ and $y\in p^{m}q^{n}\Z_{q}$,}\\
      0 &\textrm{otherwise}\\
   \end{cases}
\end{align}
for all $(m,n)\in \N^{2}$ and $f\in C(\Z_{p}\times\Z_{q})$. Therefore, to study the partial-isometric crossed product
$C^{*}(G_{p,q})\times_{\beta}^{\piso} \N^{2}$ of the system $(C^{*}(G_{p,q}),\N^{2},\beta)$, it is enough to study the crossed product
$C(\Z_{p}\times\Z_{q})\times_{\alpha}^{\piso} \N^{2}$ of the corresponding system $(C(\Z_{p}\times\Z_{q}),\N^{2},\alpha)$.
Firstly, we have $C(\Z_{p}\times\Z_{q})\simeq C(\Z_{p})\otimes C(\Z_{q})$, and recall that the
action $\alpha$ decomposes as the tensor product
$\gamma\otimes \delta:\N^{2}\rightarrow \End\big(C(\Z_{p})\otimes C(\Z_{q})\big)$ of two actions of $\N^{2}$, such that
\begin{align}\label{gamma}
\gamma_{(m,n)}(g)(x)=
   \begin{cases}
      g(p^{-m}q^{-n}x) &\textrm{if}\empty\ \text{$x\in p^{m}q^{n}\Z_{p}$,}\\
      0 &\textrm{otherwise}\\
   \end{cases}
\end{align}
for all $(m,n)\in \N^{2}$ and $g\in C(\Z_{p})$ (similarly for $\delta:\N^{2}\rightarrow \End\big(C(\Z_{q})\big)$). It thus follows that
$C(\Z_{p}\times\Z_{q})\times_{\alpha}^{\piso} \N^{2}\simeq \big(C(\Z_{p})\otimes C(\Z_{q})\big)\times_{\gamma\otimes \delta}^{\piso} \N^{2}$,
for which we want to apply Theorem \ref{posit-series}. To do so, consider the extendible $\gamma$-invariant ideal
$I:=C_{0}(\Z_{p}\backslash\{0\})$ of $C(\Z_{p})$ and the extendible $\delta$-invariant ideal $J:=C_{0}(\Z_{q}\backslash\{0\})$ of
$C(\Z_{q})$ as in \cite{LPR}. Now, by applying Theorem \ref{posit-series} to the systems $(C(\Z_{p}),\N^{2},\gamma)$ and
$(C(\Z_{q}),\N^{2},\delta)$ along with the ideals $I$ and $J$, we get the following theorem which is the partial-isometric version of
\cite[Theorem 2.2]{LPR}:
\begin{theorem}
\label{piso-LPR}
There are ideals $\I_{1}$ and $\I_{2}$ in
$$C(\Z_{p}\times\Z_{q})\times_{\alpha}^{\piso} \N^{2}\simeq \big(C(\Z_{p})\otimes C(\Z_{q})\big)\times_{\gamma\otimes \delta}^{\piso} \N^{2}$$
which form the composition series
\begin{align}
\label{compose-2}
0\leq \I_{1}\leq \I_{2} \leq C(\Z_{p}\times\Z_{q})\times_{\alpha}^{\piso} \N^{2}
\end{align}
of ideals, such that:
\begin{itemize}
\item[(a)] $\I_{1}\simeq \A\otimes_{\max} \A \otimes C(\U(\Zp)\times \U(\Zq))$,
\item[(b)] $\I_{2}/\I_{1}\simeq \big(\A\otimes_{\max} \big[C(\U(\Zp))\times_{\sigma^{p,q}}^{\piso} \N\big]\big) \oplus
\big(\A\otimes_{\max} \big[C(\U(\Zq))\times_{\sigma^{q,p}}^{\piso} \N\big]\big)$, and
\item[(c)] $\big(C(\Z_{p}\times\Z_{q})\times_{\alpha}^{\piso} \N^{2}\big)/\I_{2}\simeq \T(\Z^{2})\simeq \T(\Z)\otimes \T(\Z)$,
\end{itemize}
where $\U(\Zp)$ is the group of the multiplicatively invertible elements in $\Zp$ (similarly for $\U(\Zq)$), $\sigma^{p,q}$
is the action of $\N$ on the algebra $C(\U(\Zp))$ by automorphisms such that
$\sigma^{p,q}_{n}(f)(x)=f(q^{-n}x)$ (similarly for $\sigma^{q,p}$), and the algebra $\A$ is a full corner in
the algebra $\K(\ell^{2}(\N)\otimes \c)$ of compact operators, in which $\c=B_{\N}=\clsp\{1_{n}: n\in \N\}$.
\end{theorem}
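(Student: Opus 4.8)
The plan is to realise the composition series of the statement as the image, under the two displayed isomorphisms, of the series that Theorem \ref{posit-series} attaches to the pair of factor systems $(C(\Zp),\N^{2},\gamma)$ and $(C(\Zq),\N^{2},\delta)$ together with the ideals $I:=C_{0}(\Zp\backslash\{0\})$ and $J:=C_{0}(\Zq\backslash\{0\})$. First I would dispose of the two isomorphisms in the statement: the Fourier transform of \cite[Lemma 1.1]{LPR} is a $\beta$--$\alpha$ equivariant $*$-isomorphism $C^{*}(G_{p,q})\simeq C(\Zp\times\Zq)$, and $C(\Zp\times\Zq)\simeq C(\Zp)\otimes C(\Zq)$ carries $\alpha$ onto $\gamma\otimes\delta$ as recalled above, so by Remark \ref{rmk-2} it suffices to analyse $\big(C(\Zp)\otimes C(\Zq)\big)\times_{\gamma\otimes\delta}^{\piso}\N^{2}$. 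As in \cite{LPR}, $\{0\}$ is the unique common fixed point of the maps $y\mapsto p^{m}q^{n}y$, so $I$ is an extendible $\gamma_{x}$-invariant ideal of $C(\Zp)$ and $J$ an extendible $\delta_{x}$-invariant ideal of $C(\Zq)$ for every $x\in\N^{2}$; Theorem \ref{posit-series} then gives ideals $\I_{1}\leq\I_{2}$ with $\I_{1}\simeq(I\otimes J)\times_{\gamma\otimes\delta}^{\piso}\N^{2}$, with $\I_{2}/\I_{1}\simeq(C(\Zp)/I\otimes J)\times_{\tilde{\gamma}\otimes\delta}^{\piso}\N^{2}\oplus(I\otimes C(\Zq)/J)\times_{\gamma\otimes\tilde{\delta}}^{\piso}\N^{2}$, and with the quotient by $\I_{2}$ isomorphic to $(C(\Zp)/I\otimes C(\Zq)/J)\times_{\tilde{\gamma}\otimes\tilde{\delta}}^{\piso}\N^{2}$.

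The bulk of the work is then to identify these five crossed products. Evaluation at $0$ gives $C(\Zp)/I\simeq\C$ with $\tilde{\gamma}$ the trivial action, and symmetrically $C(\Zq)/J\simeq\C$ with $\tilde{\delta}$ trivial. The $p$-adic valuation gives a homeomorphism $\Zp\backslash\{0\}\cong\U(\Zp)\times\N$ (discrete $\N$), hence a $\gamma$-equivariant identification $I\simeq c_{0}(\N)\otimes C(\U(\Zp))$ under which the generator $(1,0)$ of $\N^{2}$ acts by the shift endomorphism $\zeta$ on $c_{0}(\N)$ (trivially on $C(\U(\Zp))$) and $(0,1)$ acts by $\sigma^{p,q}$ on $C(\U(\Zp))$ (trivially on $c_{0}(\N)$); symmetrically $J\simeq C(\U(\Zq))\otimes c_{0}(\N)$ with $(1,0)$ acting by $\sigma^{q,p}$ on $C(\U(\Zq))$ and $(0,1)$ by the shift on $c_{0}(\N)$. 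Writing $\A:=c_{0}(\N)\times_{\zeta}^{\piso}\N$, statement (c) follows by viewing $\N^{2}=\N\times\N$ and applying Theorem \ref{P*S-tensor} to $(\C,\N,\id)\times(\C,\N,\id)$, which gives $\C\times_{\id}^{\piso}\N^{2}\simeq(\C\times_{\id}^{\piso}\N)\otimes_{\max}(\C\times_{\id}^{\piso}\N)$, together with Example \ref{C*(P)}, which identifies $\C\times_{\id}^{\piso}\N\simeq C^{*}(\N)=\T(\Z)$ (every isometric representation of $\N$ is Nica covariant); hence the quotient is $\T(\Z)\otimes\T(\Z)=\T(\Z^{2})$.

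For (b), the first summand of $\I_{2}/\I_{1}$ is $(\C\otimes J)\times_{\id\otimes\delta}^{\piso}\N^{2}=J\times_{\delta}^{\piso}\N^{2}$, and in the valuation picture the $\N\times\N$-action on $C(\U(\Zq))\otimes c_{0}(\N)$ is an honest tensor product of $\N$-actions (the first $\N$ on $C(\U(\Zq))$ by $\sigma^{q,p}$, the second on $c_{0}(\N)$ by $\zeta$); Theorem \ref{P*S-tensor} gives $\simeq\A\otimes_{\max}[C(\U(\Zq))\times_{\sigma^{q,p}}^{\piso}\N]$, and symmetrically the second summand is $\A\otimes_{\max}[C(\U(\Zp))\times_{\sigma^{p,q}}^{\piso}\N]$. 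For (a), in the valuation picture $I\otimes J\simeq c_{0}(\N)\otimes C(\U(\Zp))\otimes C(\U(\Zq))\otimes c_{0}(\N)$, and the $\N\times\N$-action regroups (uniquely) as a tensor product over $\N\times\N$ of an action on $B_{1}:=c_{0}(\N)\otimes C(\U(\Zq))$ by $\zeta\otimes\sigma^{q,p}$ and an action on $B_{2}:=C(\U(\Zp))\otimes c_{0}(\N)$ by $\sigma^{p,q}\otimes\zeta$; Theorem \ref{P*S-tensor} gives $\I_{1}\simeq(B_{1}\times^{\piso}\N)\otimes_{\max}(B_{2}\times^{\piso}\N)$. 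The rotation twist is then killed by Remark \ref{rmk-2}: the $*$-automorphism $\Phi$ of $c_{0}(\N,C(\U(\Zq)))$ defined by $\Phi(b)_{k}=(\sigma^{q,p})^{-k}(b_{k})$ intertwines $\zeta\otimes\sigma^{q,p}$ with $\zeta\otimes\id$, so $B_{1}\times_{\zeta\otimes\sigma^{q,p}}^{\piso}\N\simeq B_{1}\times_{\zeta\otimes\id}^{\piso}\N\simeq\A\otimes C(\U(\Zq))$ (the last step by Theorem \ref{P*S-tensor} with the trivial semigroup acting on $C(\U(\Zq))$), and symmetrically $B_{2}\times^{\piso}\N\simeq C(\U(\Zp))\otimes\A$; reordering the (nuclear) tensor factors yields $\I_{1}\simeq\A\otimes_{\max}\A\otimes C(\U(\Zp)\times\U(\Zq))$.

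The one genuinely new ingredient, and the step I expect to be the main obstacle, is the identification of $\A=c_{0}(\N)\times_{\zeta}^{\piso}\N$ with a full corner in $\K(\ell^{2}(\N)\otimes B_{\N})$. I would isolate this as a preliminary lemma: starting from the spatial model of Example \ref{exmp-piso-rep} build an explicit faithful covariant partial-isometric representation of $(c_{0}(\N),\N,\zeta)$ on a Hilbert $B_{\N}$-module, exhibit a full projection in the corresponding multiplier algebra whose compression is the image of $\A$, and conclude via the faithfulness criterion of Remark \ref{faithful-rep}; this is in the spirit of the corner realisations recalled in the introduction (cf. \cite{SZ2,SZ}). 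Everything else is bookkeeping --- chiefly tracking which generator of $\N^{2}$ acts on which tensor factor, and checking that the tensor-product actions arising are left Nica covariant so that Theorem \ref{P*S-tensor} applies, with min-versus-max tensor products harmless throughout since all algebras in sight are nuclear.
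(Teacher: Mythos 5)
Your proposal follows essentially the same route as the paper's proof: Theorem \ref{posit-series} applied to the factor systems $(C(\Zp),\N^{2},\gamma)$ and $(C(\Zq),\N^{2},\delta)$ with the ideals $C_{0}(\Zp\backslash\{0\})$ and $C_{0}(\Zq\backslash\{0\})$, the valuation identification $C_{0}(\Zp\backslash\{0\})\simeq \c_{0}\otimes C(\U(\Zp))$ from \cite[Corollary 2.4]{LPR}, the untwisting automorphism $\Phi$, Theorem \ref{P*S-tensor} to split the tensor factors, and Example \ref{C*(P)} for the Toeplitz quotient. The one step you single out as the main obstacle --- realising $\c_{0}\times_{\tau}^{\piso}\N$ as a full corner in $\K(\ell^{2}(\N)\otimes \c)$ --- needs no new lemma: the paper simply quotes it from \cite[Example 4.3]{AZ}.
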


\begin{proof}
We apply Theorem \ref{posit-series} to the systems $(C(\Z_{p}),\N^{2},\gamma)$ and
$(C(\Z_{q}),\N^{2},\delta)$ together with the ideals $I=C_{0}(\Z_{p}\backslash\{0\})$ and $J=C_{0}(\Z_{q}\backslash\{0\})$. Therefore, we have
$$\I_{1}:=\big[C_{0}(\Z_{p}\backslash\{0\})\otimes C_{0}(\Z_{q}\backslash\{0\})\big]\times_{\gamma\otimes \delta}^{\piso} \N^{2},$$ and
$$\I_{2}:=\big[C_{0}(\Z_{p}\backslash\{0\})\otimes C(\Z_{q})\big]\times_{\gamma\otimes \delta}^{\piso} \N^{2}+
\big[C(\Z_{p})\otimes C_{0}(\Z_{q}\backslash\{0\})\big]\times_{\gamma\otimes \delta}^{\piso} \N^{2},$$ from which we obtain the
composition series (\ref{compose-2}) of ideals.

Next, to identify the subquotients with familiar terms, we start with (c). First, by
(iii) in Theorem \ref{posit-series}, we have
$$\big(C(\Z_{p}\times\Z_{q})\times_{\alpha}^{\piso} \N^{2}\big)/\I_{2}\simeq
\big(\big[C(\Z_{p})/C_{0}(\Z_{p}\backslash\{0\})\big]\otimes \big[C(\Z_{q})/C_{0}(\Z_{q}\backslash\{0\})\big]\big)
\times_{\tilde{\gamma}\otimes\tilde{\delta}}^{\piso} \N^{2}.$$
Then, as
$$C(\Z_{p})/C_{0}(\Z_{p}\backslash\{0\})\simeq \C \simeq C(\Z_{q})/C_{0}(\Z_{q}\backslash\{0\})\ \ (\textrm{see \cite{LPR}}),$$
we get
\begin{eqnarray*}
\begin{array}{rcl}
\big(C(\Z_{p}\times\Z_{q})\times_{\alpha}^{\piso} \N^{2}\big)/\I_{2}&\simeq&
(\C \otimes \C)\times_{\id\otimes \id}^{\piso} \N^{2}\\
&\simeq& \C \times_{\id}^{\piso} \N^{2}\simeq \T(\Z^{2}),
\end{array}
\end{eqnarray*}
where the bottom lines follows from Example \ref{C*(P)} as $(\Z^{2},\N^{2})$ is abelian (see also \cite[Remark 5.4]{SZ2}).
Also, by applying Theorem \ref{P*S-tensor},
\begin{eqnarray*}
\begin{array}{rcl}
\T(\Z^{2})\simeq \C \times_{\id}^{\piso} \N^{2}&\simeq& (\C \otimes \C)\times_{\id\otimes \id}^{\piso} \N^{2}\\
&\simeq& (\C \times_{\id}^{\piso} \N)\otimes (\C \times_{\id}^{\piso} \N)\simeq \T(\Z)\otimes \T(\Z),
\end{array}
\end{eqnarray*}
where $\C \times_{\id}^{\piso} \N\simeq \T(\Z)$ is known by \cite[Example 4.3]{AZ} (see also Example \ref{C*(P)} or \cite[Remark 5.4]{SZ2}).
So, we are done with (c).

To get (a), first, by \cite[Corollary 2.4]{LPR}, there is an isomorphism
$$C_{0}(\Z_{p}\backslash\{0\})\simeq \c_{0}\otimes C(\U(\Zp)),$$
where $\c_{0}=\clsp\{1_{n}-1_{m}:n,m\in \N\ \textrm{with}\ n<m\}=C_{0}(\N)$. By this isomorphism, the action
$\gamma$ corresponds to the tensor product action $\tau\otimes \sigma^{p,q}$, where $\tau$ is the action
of $\N$ on $\c_{0}$ by forward shifts (similarly for $C_{0}(\Z_{q}\backslash\{0\})$ and the action $\delta$).
Therefore, we have an isomorphism (see the ideal $\I_{1}$)
\begin{eqnarray*}
\begin{array}{rcl}
C_{0}(\Z_{p}\backslash\{0\})\otimes C_{0}(\Z_{q}\backslash\{0\})&\simeq& \c_{0}\otimes\c_{0}\otimes C(\U(\Zp))\otimes C(\U(\Zq))\\
&\simeq& C_{0}(\N\times \N) \otimes C(\U(\Zp)\times \U(\Zq))\\
&\simeq& C_{0}(\N\times \N \times \U(\Zp)\times \U(\Zq))
\end{array}
\end{eqnarray*}
which takes each endomorphism $(\gamma\otimes \delta)_{(m,n)}$ to
$\tau_{m}\otimes\tau_{n}\otimes \sigma^{p,q}_{n}\otimes \sigma^{q,p}_{m}$. So, it follows that
$$\I_{1}\simeq\big[C_{0}(\N\times \N) \otimes C(\U(\Zp)\times \U(\Zq))\big]
\times_{(\tau \otimes \tau) \otimes (\sigma^{p,q}\otimes \sigma^{q,p})}^{\piso} \N^{2}.$$
Moreover, there is an automorphism $\Phi$ of $C_{0}(\N\times \N) \otimes C(\U(\Zp)\times \U(\Zq))$ such that we have
$$\Phi\circ (\tau_{m}\otimes\tau_{n}\otimes \sigma^{p,q}_{n}\otimes \sigma^{q,p}_{m})
=\tau_{m}\otimes\tau_{n}\otimes \id\otimes \id\ \ (\textrm{see again \cite{LPR}}).$$
The automorphism $\Phi$ then induces the isomorphism
$$\I_{1}\simeq\big[C_{0}(\N\times \N) \otimes C(\U(\Zp)\times \U(\Zq))\big]
\times_{(\tau \otimes \tau) \otimes \id}^{\piso} \N^{2}.$$
Next, we need to show that
$$\big[C_{0}(\N\times \N) \otimes C(\U(\Zp)\times \U(\Zq))\big]\times_{(\tau \otimes \tau) \otimes \id}^{\piso} \N^{2}
\simeq \big[C_{0}(\N\times \N)\times_{\tau \otimes \tau}^{\piso} \N^{2}\big] \otimes C(\U(\Zp)\times \U(\Zq)).$$
We skip the proof as it is routine and refer readers to Remark \ref{rmk-4} for an indication on the proof.
Also, by applying Theorem \ref{P*S-tensor},
$$C_{0}(\N\times \N)\times_{\tau \otimes \tau}^{\piso} \N^{2}
\simeq (\c_{0}\otimes \c_{0})\times_{\tau \otimes \tau}^{\piso} \N^{2}
\simeq (\c_{0}\times_{\tau}^{\piso} \N) \otimes_{\max} (\c_{0}\times_{\tau}^{\piso} \N),$$
and hence, we get
$$\I_{1}\simeq (\c_{0}\times_{\tau}^{\piso} \N) \otimes_{\max} (\c_{0}\times_{\tau}^{\piso} \N) \otimes C(\U(\Zp)\times \U(\Zq)).$$
Finally, see in \cite[Example 4.3]{AZ} that the algebra $\c_{0}\times_{\tau}^{\piso} \N$ is a full corner
in the algebra $\K(\ell^{2}(\N)\otimes \c)$ of compact operators. More precisely, let $P$ be the projection
in $\M(\K(\ell^{2}(\N)\otimes \c))\simeq \L(\ell^{2}(\N)\otimes \c)$ defined by
$$P(\xi)(n)=\tau_{n}(1)\xi(n)=1_{n}\xi(n)\ \ \textrm{for all}\ \xi\in \ell^{2}(\N)\otimes \c,$$
where $\tau$ is the action of $\N$ on the algebra $\c$ by forward shifts. Then, $\c_{0}\times_{\tau}^{\piso} \N$ is
isomorphic to the full corner $P\K(\ell^{2}(\N)\otimes \c)P$, which we denote it by $\A$. Thus, we have
$$\I_{1}\simeq \A\otimes_{\max} \A \otimes C(\U(\Zp)\times \U(\Zq)).$$

At last, to see (b), we first apply (ii) in Theorem \ref{posit-series} to get
$$\I_{2}/\I_{1}\simeq \big[C_{0}(\Z_{p}\backslash\{0\})\otimes C(\Z_{q})/J\big]\times_{\gamma\otimes \tilde{\delta}}^{\piso} \N^{2}\oplus
\big[C(\Z_{p})/I\otimes C_{0}(\Z_{q}\backslash\{0\})\big]\times_{\tilde{\gamma}\otimes \delta}^{\piso} \N^{2},$$
and since $C(\Z_{q})/J\simeq \C \simeq C(\Z_{p})/I$, we have
\begin{eqnarray*}
\begin{array}{rcl}
\I_{2}/\I_{1}&\simeq& \big[C_{0}(\Z_{p}\backslash\{0\})\otimes \C\big]\times_{\gamma\otimes \id}^{\piso} \N^{2}\oplus
\big[\C\otimes C_{0}(\Z_{q}\backslash\{0\})\big]\times_{\id\otimes \delta}^{\piso} \N^{2}\\
&\simeq& \big(C_{0}(\Z_{p}\backslash\{0\})\times_{\gamma}^{\piso} \N^{2}\big) \oplus
\big(C_{0}(\Z_{q}\backslash\{0\})\times_{\delta}^{\piso} \N^{2}\big).\\
\end{array}
\end{eqnarray*}
Then, by applying \cite[Corollary 2.4]{LPR} and Theorem \ref{P*S-tensor} (see also the proof of (a)), for the crossed product
$\big(C_{0}(\Z_{p}\backslash\{0\})\times_{\gamma}^{\piso} \N^{2}\big)$, we have
\begin{eqnarray*}
\begin{array}{rcl}
C_{0}(\Z_{p}\backslash\{0\})\times_{\gamma}^{\piso} \N^{2}&\simeq&
\big(\c_{0}\otimes C(\U(\Z_{p}))\big)\times_{\tau\otimes \sigma^{p,q}}^{\piso} (\N\times\N)\\
&\simeq& \big[\c_{0}\times_{\tau}^{\piso} \N\big] \otimes_{\max} \big[C(\U(\Zp))\times_{\sigma^{p,q}}^{\piso} \N\big]\\
&\simeq& \A \otimes_{\max} \big[C(\U(\Zp))\times_{\sigma^{p,q}}^{\piso} \N\big].
\end{array}
\end{eqnarray*}
Similarly,
$$C_{0}(\Z_{q}\backslash\{0\})\times_{\delta}^{\piso} \N^{2}\simeq \A \otimes_{\max} \big[C(\U(\Zq))\times_{\sigma^{q,p}}^{\piso} \N\big].$$
Therefore, it follows that
$$\I_{2}/\I_{1}\simeq \big(\A\otimes_{\max} \big[C(\U(\Zp))\times_{\sigma^{p,q}}^{\piso} \N\big]\big) \oplus
\big(\A\otimes_{\max} \big[C(\U(\Zq))\times_{\sigma^{q,p}}^{\piso} \N\big]\big).$$
This completes the proof.
\end{proof}

\begin{remark}
\label{BD-algebra}
Recall that, if $m$ and $n$ are relatively prime integers, then $m\in \U(\Z/n\Z)$ (similarly $n\in \U(\Z/m\Z)$).
Let $\order_{n}(m)$ denote the order of $m$ in $\U(\Z/n\Z)$. Since $p$ and $q$ in Theorem \ref{piso-LPR} are distinct odd primes, it
follows by \cite[Theorem 3.1]{LPR} that there is a positive integer $L=L_{p}(q)$ such that
\begin{align}
\label{order}
\order_{p^{\ell}}(q)=
   \begin{cases}
      \order_{p}(q) &\textrm{if}\empty\ \text{$1\leq \ell\leq L$,}\\
      p^{\ell-L} \order_{p}(q) &\textrm{if}\empty\ \text{$\ell>L$.}\\
   \end{cases}
\end{align}
Now, since the action $\sigma^{p,q}$ of $\N$ on the algebra $C(\U(\Zp))$ is given by automorphisms, it follows by \cite[Theorem 4.1]{AZ} that
the algebra of compact operators $\K(\ell^{2}(\N))\otimes C(\U(\Zp))$ sits in $C(\U(\Zp))\times_{\sigma^{p,q}}^{\piso} \N$ as an ideal,
such that
$$\big[C(\U(\Zp))\times_{\sigma^{p,q}}^{\piso} \N\big]\big/\big[\K(\ell^{2}(\N))\otimes C(\U(\Zp))\big]
\simeq C(\U(\Zp))\times_{\sigma^{p,q}}^{\iso} \N \simeq C(\U(\Zp))\times_{\sigma^{p,q}} \Z.$$
But, again by \cite[Theorem 3.1]{LPR}, the classical crossed product $C(\U(\Zp))\times_{\sigma^{p,q}} \Z$ is the direct sum of
$p^{L_{p}(q)-1}(p-1)/\order_{p}(q)$ Bunce-Deddens algebras with supernatural number $\order_{p}(q)p^{\infty}$. Similarly,
$$\big[C(\U(\Zq))\times_{\sigma^{q,p}}^{\piso} \N\big]\big/\big[\K(\ell^{2}(\N))\otimes C(\U(\Zq))\big]
\simeq C(\U(\Zq))\times_{\sigma^{q,p}}^{\iso} \N \simeq C(\U(\Zq))\times_{\sigma^{q,p}} \Z,$$
where $C(\U(\Zq))\times_{\sigma^{q,p}} \Z$ is the direct sum of
$q^{L_{q}(p)-1}(q-1)/\order_{q}(p)$ Bunce-Deddens algebras with supernatural number $\order_{q}(p)q^{\infty}$. Readers are
referred to \cite{B-D, David, Exel} on Bunce-Deddens algebras. However, we will provide a quick recall on these algebras shortly later.
\end{remark}

Next, we would like to analyze the crossed product $C(\U(\Zp))\times_{\sigma^{p,q}}^{\piso} \N$ in Theorem \ref{piso-LPR} more.
Recall from \cite{LPR} that if $\Gamma$ is the closed subgroup of $\U(\Zp)$ generated by $q$, then it is invariant under multiplication
by powers of $q$. It thus follows that the ideal $C(\Gamma)$ of $C(\U(\Zp))$ is $\sigma^{p,q}$-invariant. Note that the
same facts hold for each closed subset $x\Gamma$ of $\U(\Zp)$ and each ideal $C(x\Gamma)$ of $C(\U(\Zp))$, where $x\in \U(\Zp)$.
Moreover, since $\U(\Zp)$ is the disjoint union of $N:=p^{L_{p}(q)-1}(p-1)/\order_{p}(q)$ closed invariant subsets of the form
$x\Gamma$, $C(\U(\Zp))$ is the direct sum of $N$ $\sigma^{p,q}$-invariant ideals of the form $C(x\Gamma)$. Now, since
each ideal $C(x\Gamma)$, as well as the algebra $C(\U(\Zp))$, is unital and the action $\sigma^{p,q}$ of $\N$ is given
by automorphisms, it is not difficult to see that each algebra $C(x\Gamma)$ is actually an extendible $\sigma^{p,q}$-invariant
ideal of $C(\U(\Zp))$. Therefore, each crossed product $C(x\Gamma)\times_{\sigma^{p,q}}^{\piso} \N$ sits in
$C(\U(\Zp))\times_{\sigma^{p,q}}^{\piso} \N$ as an ideal by Theorem \ref{piso-ext-seq} or \cite[Theorem 3.1]{AZ2}. Also,
calculation shows that
\begin{align}
\label{eq32}
C(\U(\Zp))\times_{\sigma^{p,q}}^{\piso} \N
\simeq\big(C(x_{1}\Gamma)\times_{\sigma^{p,q}}^{\piso} \N\big)\oplus\big(C(x_{2}\Gamma)\times_{\sigma^{p,q}}^{\piso} \N\big)\oplus
\cdot\cdot\cdot\oplus\big(C(x_{N}\Gamma)\times_{\sigma^{p,q}}^{\piso} \N\big),
\end{align}
where one may take $x_{1}$ to be $1$, the unit element of the group $\U(\Zp)$. Now, since for every $x\in\U(\Zp)$, the closed subsets $\Gamma$ and
$x\Gamma$ of $\U(\Zp)$ are homeomorphic, there is an isomorphism $\psi:C(\Gamma)\rightarrow C(x\Gamma)$ of $C^{*}$-algebras such that
$\sigma^{p,q}_{n}\circ\psi=\psi\circ\sigma^{p,q}_{n}$ for all $n\in\N$. The isomorphism $\psi$ then induces an isomorphism between
crossed products $C(\Gamma)\times_{\sigma^{p,q}}^{\piso} \N$ and $C(x\Gamma)\times_{\sigma^{p,q}}^{\piso} \N$ (see Remark \ref{rmk-2}).
So, this fact and (\ref{eq32}) imply that the crossed product $C(\U(\Zp))\times_{\sigma^{p,q}}^{\piso} \N$ is actually isomorphic to
the direct sum of $N$ ideals $C(\Gamma)\times_{\sigma^{p,q}}^{\piso} \N$. At last, we want to have a familiar description for
the crossed product $C(\Gamma)\times_{\sigma^{p,q}}^{\piso} \N$. To do so, we need to recall on Bunce-Deddens algebras quickly.
These algebras were first defined in \cite{B-D} as the $C^{*}$-algebras related to certain weighted shift operators on the Hilbert space
$\ell^{2}(\N)$. They were then identified as the classical crossed products by (automorphic) actions induced by
the odometer map (see \cite{Exel, LPR}). Suppose that $\{n_{i}\}_{i=0}^{\infty}$ is a strictly increasing sequence of positive integers,
such that $n_{i}$ divides $n_{i+1}$ for all $i$. Note that one can assume that $n_{0}=1$ without loss of generality. For every $i\geq 0$,
let $m_{i}=n_{i+1}/n_{i}$, and then consider the Cantor set $\mathbf{K}$ given by the model
$$\mathbf{K}=\prod_{i=0}^{\infty} \{0,1,..., m_{i}-1\}.$$
The odometer map on $\mathbf{K}$, namely $\mathcal{O}:\mathbf{K}\rightarrow \mathbf{K}$, is given by addition of $(1,0,0,...)$ with carry over
to the right. For example, $\mathcal{O}(m_{0}-1,m_{1}-1,0,0,0,...)=(0,0,1,0,0,...)$. So, it induces an action
$\tau$ of $\Z$ on the algebra $C(\mathbf{K})$ by automorphisms, such that the classical crossed product
$C(\mathbf{K})\times_{\tau} \Z$ is a Bunce-Deddens algebra with supernatural number $\Pi_{i\geq 0}\ m_{i}$. Now, in particular,
for the sequence
$$\{n_{i}\}_{i=0}^{\infty}=\{1, d, dp, dp^{2}, dp^{3},...\},$$
the Cantor set $\mathbf{K}$ is
$$\mathbf{K}=\{0,1,...,d-1\}\times \prod_{i=1}^{\infty} \{0,1,..., p-1\},$$
where $d=\order_{p}(q)$. Then, there is a homeomorphism of $\mathbf{K}$ onto the closed subgroup $\Gamma$, which induces an
isomorphism $\varphi$ of $C(\mathbf{K})$ onto $C(\Gamma)$ such that $\varphi\circ\tau_{n}=\sigma^{p,q}_{n}\circ \varphi$
for all $n\in \N$ (see \cite{LPR}).
Therefore, each ideal $C(\Gamma)\times_{\sigma^{p,q}}^{\piso} \N$ is actually isomorphic to the partial-isometric crossed product
$C(\mathbf{K})\times_{\tau}^{\piso} \N$ (see Remark \ref{rmk-2}). Consequently, the algebra $C(\U(\Zp))\times_{\sigma^{p,q}}^{\piso} \N$
is in fact isomorphic to the direct sum of $N$ crossed products $C(\mathbf{K})\times_{\tau}^{\piso} \N$. Note that, since the action
$\tau$ is automorphic, by \cite[Theorem 4.1]{AZ}, $C(\mathbf{K})\times_{\tau}^{\piso} \N$ contains the algebra of compact operators
$\K(\ell^{2}(\N))\otimes C(\mathbf{K})$ as an ideal, such that the quotient algebra
$$\big[C(\mathbf{K})\times_{\tau}^{\piso} \N\big]\big/\big[\K(\ell^{2}(\N))\otimes C(\mathbf{K})\big]
\simeq C(\mathbf{K})\times_{\tau}^{\iso} \N \simeq C(\mathbf{K})\times_{\tau} \Z$$
is a Bunce-Deddens algebra with supernatural number $\order_{p}(q)p^{\infty}$. By swapping the roles of $p$ and $q$, similarly,
the algebra $C(\U(\Zq))\times_{\sigma^{q,p}}^{\piso} \N$ is indeed isomorphic to the direct sum of $q^{L_{q}(p)-1}(q-1)/\order_{q}(p)$
crossed products $C(\widetilde{\mathbf{K}})\times_{\tilde{\tau}}^{\piso} \N$, where
$$\widetilde{\mathbf{K}}=\{0,1,...,\order_{q}(p)-1\}\times \prod_{i=1}^{\infty} \{0,1,..., q-1\},$$
and $\tilde{\tau}$ is the action of $\Z$ on $C(\widetilde{\mathbf{K}})$ by automorphisms induced by the odometer map on
$\widetilde{\mathbf{K}}$. Also, the quotient algebra (again by \cite[Theorem 4.1]{AZ})
$$\big[C(\widetilde{\mathbf{K}})\times_{\tilde{\tau}}^{\piso} \N\big]\big/\big[\K(\ell^{2}(\N))\otimes C(\widetilde{\mathbf{K}})\big]
\simeq C(\widetilde{\mathbf{K}})\times_{\tilde{\tau}}^{\iso} \N \simeq C(\widetilde{\mathbf{K}})\times_{\tilde{\tau}} \Z$$
is a Bunce-Deddens algebra with supernatural number $\order_{q}(p)q^{\infty}$. Therefore, we have actually proved the following
corollary as a refinement of Theorem \ref{piso-LPR}:
\begin{cor}
\label{piso-LPR-2}
There are ideals $\I_{1}$ and $\I_{2}$ in
$$C(\Z_{p}\times\Z_{q})\times_{\alpha}^{\piso} \N^{2}\simeq \big(C(\Z_{p})\otimes C(\Z_{q})\big)\times_{\gamma\otimes \delta}^{\piso} \N^{2}$$
which form the composition series
\begin{align}
\label{compose-3}
0\leq \I_{1}\leq \I_{2} \leq C(\Z_{p}\times\Z_{q})\times_{\alpha}^{\piso} \N^{2}
\end{align}
of ideals, such that:
\begin{itemize}
\item[(a)] $\I_{1}\simeq \A\otimes_{\max} \A \otimes C(\U(\Zp)\times \U(\Zq))$,
\item[(b)] $\I_{2}/\I_{1}\simeq (\A\otimes_{\max} \mathcal{C}) \oplus (\A\otimes_{\max} \mathcal{D})$, and
\item[(c)] $\big(C(\Z_{p}\times\Z_{q})\times_{\alpha}^{\piso} \N^{2}\big)/\I_{2}\simeq \T(\Z^{2})\simeq \T(\Z)\otimes \T(\Z)$,
\end{itemize}
where $\U(\Zp)$ is the group of the multiplicatively invertible elements in $\Zp$ (similarly for $\U(\Zq)$), the algebra
$\A$ is a full corner in the algebra $\K(\ell^{2}(\N)\otimes \c)$ of compact operators, the algebra $\mathcal{C}$ is the
direct sum of $p^{L_{p}(q)-1}(p-1)/\order_{p}(q)$ crossed products $C(\mathbf{K})\times_{\tau}^{\piso} \N$, and the algebra
$\mathcal{D}$ is the direct sum of $q^{L_{q}(p)-1}(q-1)/\order_{q}(p)$ crossed products
$C(\widetilde{\mathbf{K}})\times_{\tilde{\tau}}^{\piso} \N$.
\end{cor}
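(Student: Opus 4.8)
The plan is to obtain Corollary \ref{piso-LPR-2} directly from Theorem \ref{piso-LPR} by refining item (b) only; items (a) and (c) coincide in the two statements, so nothing further is required for them. Thus it suffices to establish
$$C(\U(\Zp))\times_{\sigma^{p,q}}^{\piso}\N\ \simeq\ \mathcal{C}\qquad\text{and}\qquad C(\U(\Zq))\times_{\sigma^{q,p}}^{\piso}\N\ \simeq\ \mathcal{D},$$
and then substitute these into Theorem \ref{piso-LPR}(b); by the symmetry $p\leftrightarrow q$ I would only carry out the first identification.

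First I would decompose the coefficient algebra. Letting $\Gamma$ denote the closed subgroup of $\U(\Zp)$ generated by $q$, the group $\U(\Zp)$ is the disjoint union of $N:=p^{L_{p}(q)-1}(p-1)/\order_{p}(q)$ cosets $x_{1}\Gamma,\dots,x_{N}\Gamma$, each closed and invariant under multiplication by powers of $q$, so that $C(\U(\Zp))$ is the finite direct sum of the extendible $\sigma^{p,q}$-invariant ideals $C(x_{i}\Gamma)$, exactly as recorded in the discussion preceding the corollary. Applying Theorem \ref{piso-ext-seq} (equivalently \cite[Theorem 3.1]{AZ2}) to each of these summands and using that a $C^{*}$-extension with a complemented ideal splits, I obtain
$$C(\U(\Zp))\times_{\sigma^{p,q}}^{\piso}\N\ \simeq\ \bigoplus_{i=1}^{N}\big(C(x_{i}\Gamma)\times_{\sigma^{p,q}}^{\piso}\N\big).$$

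Next I would replace each summand by a single model. Each coset $x_{i}\Gamma$ is homeomorphic to $\Gamma$ by a map intertwining multiplication by $q$, giving an isomorphism $C(x_{i}\Gamma)\simeq C(\Gamma)$ that intertwines the restricted actions $\sigma^{p,q}$; Remark \ref{rmk-2} then promotes it to an isomorphism $C(x_{i}\Gamma)\times_{\sigma^{p,q}}^{\piso}\N\simeq C(\Gamma)\times_{\sigma^{p,q}}^{\piso}\N$. Finally, the homeomorphism of the Cantor set $\mathbf{K}=\{0,1,\dots,\order_{p}(q)-1\}\times\prod_{i=1}^{\infty}\{0,1,\dots,p-1\}$ onto $\Gamma$ that intertwines the odometer action $\tau$ with $\sigma^{p,q}$ induces, again via Remark \ref{rmk-2}, an isomorphism $C(\Gamma)\times_{\sigma^{p,q}}^{\piso}\N\simeq C(\mathbf{K})\times_{\tau}^{\piso}\N$. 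Chaining these three isomorphisms gives $C(\U(\Zp))\times_{\sigma^{p,q}}^{\piso}\N\simeq\mathcal{C}$, the direct sum of $N$ copies of $C(\mathbf{K})\times_{\tau}^{\piso}\N$; tensoring with $\A$ and invoking the symmetric statement with the roles of $p$ and $q$ interchanged turns Theorem \ref{piso-LPR}(b) into assertion (b) of the corollary, while (a) and (c) are inherited unchanged.

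I do not anticipate a serious obstacle: the analytic input — the coset decomposition of $\U(\Zp)$, the count $N$, and the identification of $\Gamma$ with the odometer model $\mathbf{K}$ carrying the supernatural number $\order_{p}(q)p^{\infty}$ — is furnished by \cite[Theorem 3.1]{LPR} together with the discussion immediately preceding the corollary. The only points needing care are bookkeeping: verifying that a partial-isometric crossed product of a finite direct sum of dynamical systems splits as the corresponding direct sum of crossed products (which follows from Theorem \ref{piso-ext-seq}), and checking that the homeomorphisms above are genuinely equivariant for the relevant actions so that Remark \ref{rmk-2} applies; both are routine.
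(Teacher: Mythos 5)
Your proposal is correct and follows essentially the same route as the paper: the paper's argument for this corollary is exactly the discussion preceding it, namely the coset decomposition of $\U(\Zp)$ into the $N$ invariant pieces $x_i\Gamma$, the resulting direct-sum splitting of $C(\U(\Zp))\times_{\sigma^{p,q}}^{\piso}\N$, and the two equivariant identifications $C(x_i\Gamma)\simeq C(\Gamma)\simeq C(\mathbf{K})$ promoted to the crossed products via Remark \ref{rmk-2}, after which (b) of Theorem \ref{piso-LPR} is rewritten while (a) and (c) carry over unchanged.
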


\begin{remark}
\label{rmk-4}
To see that for any $C^{*}$-algebra $A$,
$$(C_{0}(\N\times \N) \otimes A) \times_{(\tau \otimes \tau) \otimes \id}^{\piso} \N^{2}
\simeq \big(C_{0}(\N\times \N)\times_{\tau \otimes \tau}^{\piso} \N^{2}\big) \otimes_{\max} A,$$
let $(j_{C_{0}(\N\times \N)}, j_{\N^{2}})$ be the canonical covariant partial-isometric pair of the system
$(C_{0}(\N\times \N),\N^{2}, \tau \otimes \tau)$ in the algebra $B:=C_{0}(\N\times \N)\times_{\tau \otimes \tau}^{\piso} \N^{2}$.
Suppose that $i_{B}$ and $i_{A}$ are the canonical nondegenerate homomorphisms of the algebras $B$ and $A$ in the multiplier algebra
$\M(B\otimes_{\max} A)$, respectively (see \cite[Theorem B.27]{RW}). Consider the homomorphism given by the composition
$$C_{0}(\N\times \N)\stackrel{j_{C_{0}(\N\times \N)}}{\longrightarrow} B \stackrel{i_{B}}{\longrightarrow} \M(B\otimes_{\max} A).$$
Then, the ranges of $(i_{B}\circ j_{C_{0}(\N\times \N)})$ and $i_{A}$ commute, and therefore, there is a homomorphism
$k_{C_{0}(\N\times \N)\otimes A}:=(i_{B}\circ j_{C_{0}(\N\times \N)})\otimes_{\max} i_{A}$ of $C_{0}(\N\times \N)\otimes A$
in $\M(B\otimes_{\max} A)$ such that
$$k_{C_{0}(\N\times \N)\otimes A}(f\otimes a)=i_{B}(j_{C_{0}(\N\times \N)}(f)) i_{A}(a)=j_{C_{0}(\N\times \N)}(f)\otimes a$$
for all $f\in C_{0}(\N\times \N)$ and $a\in A$. One can see that $k_{C_{0}(\N\times \N)\otimes A}$ is indeed nondegenerate.
Next, let $k_{\N^{2}}$ be the map defined by the composition
$$\N^{2}\stackrel{j_{\N^{2}}}{\longrightarrow} \M(B) \stackrel{\overline{i_{B}}}{\longrightarrow} \M(B\otimes_{\max} A).$$
It is not difficult to see that $k_{\N^{2}}$ is a (right) Nica partial-isometric representation. Now, it is routine for one to
check that the triple $(B\otimes_{\max} A, k_{C_{0}(\N\times \N)\otimes A}, k_{\N^{2}})$ is a partial-isometric crossed product for
the system $\big(C_{0}(\N\times \N)\otimes A, \N^{2}, (\tau \otimes \tau) \otimes \id\big)$. Therefore, there is an isomorphism
$$\Upsilon:
\big((C_{0}(\N\times \N) \otimes A)\times_{(\tau \otimes \tau) \otimes \id}^{\piso} \N^{2},i_{C_{0}(\N\times \N)\otimes A}, i_{\N^{2}}\big)
\rightarrow \big(C_{0}(\N\times \N)\times_{\tau \otimes \tau}^{\piso} \N^{2}\big) \otimes_{\max} A$$
such that
\begin{eqnarray*}
\begin{array}{rcl}
\Upsilon\big(i_{C_{0}(\N\times \N)\otimes A}(f\otimes a) i_{\N^{2}}(m,n)\big)&=&k_{C_{0}(\N\times \N)\otimes A}(f\otimes a) k_{\N^{2}}(m,n)\\
&=&i_{B}(j_{C_{0}(\N\times \N)}(f)) i_{A}(a) \overline{i_{B}}(j_{\N^{2}}(m,n))\\
&=&i_{B}(j_{C_{0}(\N\times \N)}(f)) \overline{i_{B}}(j_{\N^{2}}(m,n)) i_{A}(a)\\
&=&i_{B}(j_{C_{0}(\N\times \N)}(f) j_{\N^{2}}(m,n)) i_{A}(a)\\
&=&[j_{C_{0}(\N\times \N)}(f) j_{\N^{2}}(m,n)] \otimes a.
\end{array}
\end{eqnarray*}

\end{remark}


\begin{thebibliography}{99}

\bibitem{Adji-ICM} S. Adji, {\it The partial-isometric crossed product of systems by quasi-lattice ordered semigroups},
ICM Satellite Conference on Operator Algebras and Applications, 8-12 August 2014, Cheongpung, Korea.
Available at {\url{http://www.math.snu.ac.kr/~operator_2014/slides/5_3_ICMSatellite2014_talk_adji.pdf}}.

\bibitem{Ad-TH} S. Adji, {\it Crossed products of $C^*$-algebras by semigroups of endomorphisms}, Ph.D. thesis, University of
Newcastle, 1995.



\bibitem{Adji-Abbas} S. Adji and A. Hosseini, {\it The partial-isometric crossed products of $\mathbf{c}_0$ by the forward and the backward shifts},
Bull. Malays. Math. Sci. Soc. (2) {\bf 33} (2010), no. 3, 487--498.


\bibitem{AZ} S. Adji, S. Zahmatkesh, {\it Partial-isometric crossed products by semigroups of endomorphisms as full corners}, J. Aust. Math. Soc.
{\bf 96} (2014), 145--166.

\bibitem{AZ2} S. Adji, S. Zahmatkesh, {\it The composition series of ideals of the partial-isometric crossed product by semigroup of endomorphisms}, J. Korean. Math. Soc. {\bf 52} (2015), no. 4, 869--889.

\bibitem{BNL} N. Brownlowe, N. S. Larsen and N. Stammeier, {\it On $C^*$-algebras associated to right LCM semigroups},
Trans. Amer. Math. Soc. {\bf 369} (2017), no. 1, 31--68.

\bibitem{NB} N. Brownlowe, N. S. Larsen, and N. Stammeier, {\it $C^*$-algebras of algebraic dynamical systems and
right LCM semigroups}, Indiana Univ. Math. J. {\bf 67} (2018), 2453--2486.

\bibitem{B-D} J. W. Bunce and J. A. Deddens, {\it A family of simple $C^*$-algebras related to weighted shift operators},
J. Funct. Analysis {\bf 19} (1975), 13--24.

\bibitem{David} K. R. Davidson, $C^*$-Algebras by Example, Fields Institute Monographs, Vol. 6, Amer. Math. Soc., Providence, 1996.

\bibitem{Exel} R. Exel, {\it The Bunce-Deddens algebras as crossed products by partial automorphisms},
Bol. Soc. Bras. Mat. {\bf 25} (1994), 173--179. https://doi.org/10.1007/BF01321306

\bibitem{Flet} J. Fletcher, {\it A uniqueness theorem for the Nica--Toeplitz algebra of a compactly aligned product system},
Rocky Mountain J. Math. {\bf 49} (2019), no. 5, 1563--1594.

\bibitem{Fowler} N. J. Fowler, {\it Discrete product systems of Hilbert bimodules}, Pacific J. Math.
{\bf 204} (2002), no. 2, 335--375.

\bibitem{Fow-Rae} N. J. Fowler, I. Raeburn, {\it The Toeplitz algebra of a Hilbert bimodule}, Indiana
Univ. Math. J. {\bf 48} (1999), 155--181.

\bibitem{Halmos} P. R. Halmos, L. J. Wallen, {\it Powers of partial isometries}, Indiana Univ. Math. J. {\bf 19} (1970), 657--663.









\bibitem{Bar1} B. K. Kwa\'{s}niewski,  N. S. Larsen, {\it Nica-Toeplitz algebras associated with
product systems over right LCM semigroups}, J. Math. Anal. Appl. {\bf 470} (2019), no. 1, 532--570.

\bibitem{Bar2} B. K. Kwa\'{s}niewski,  N. S. Larsen, {\it Nica-Toeplitz algebras associated with right-tensor $C^{*}$-precategories over right LCM semigroups}, Int. J. Math., https://doi.org/10.1142/S0129167X19500137, arXiv:1611.08525v3.



\bibitem{LacaR} M. Laca and I. Raeburn, {\it Semigroup crossed products and the Toeplitz algebras of nonabelian groups}, J. Funct. Anal. {\bf 139} (1996), 415--440.

\bibitem{Larsen} N. S. Larsen, {\it Nonunital semigroup crossed products}, Math. Proc. Royal Irish Acad. {\bf 100A} (2000), 205--218.

\bibitem{LPR} N. S. Larsen, I. F. Putnam, and I. Raeburn, {\it The two-prime analogue of the Hecke $C^{*}$-algebra of Bost and Connes},
Indiana Univ. Math. J. {\bf 51} (2002), no. 1, 171--186.

\bibitem{LZ} W. Lewkeeratiyutkul, S. Zahmatkesh, {\it The primitive ideal space of the partial-isometric crossed product of a system by a single automorphism}, Rocky Mountain J. Math. {\bf 47} (2017), no. 8, 2699--2722.



\bibitem{B-Li} B. Li, {\it Regualr dilation and Nica-covariant representation on right LCM semigroups}, Integr. Equ. Oper. Theory
{\bf 91}, 36 (2019). https://doi.org/10.1007/s00020-019-2534-2

\bibitem{X-Li} X. Li, {\it Semigroup C*-algebras and amenability of semigroups}, J. Funct. Anal. {\bf 262} (2012), 4302--4340.


\bibitem{LR} J. Lindiarni and I. Raeburn, {\it Partial-isometric crossed products by semigroups of endomorphisms}, J. Operator Theory {\bf 52} (2004), 61--87.




\bibitem{Nica} A. Nica, {\it $C^*$-algebras generated by isometries and Wiener-Hopf operators}, J. Operator Theory {\bf 27} (1992), 17--52.

\bibitem{Norling} M. D. Norling, {\it Inverse semigroup $C^*$-algebras associated with left cancellative semigroups}, Proc. Edinb.
Math. Soc. {\bf 57} (2014), no. 2, 533--564.

\bibitem{pimsner} M. V. Pimsner, {\it A class of $C^*$-algebras generalizing both Cuntz-Krieger algebras and crossed products by
$\mathbb{Z}$}, in Free probability theory (Waterloo, ON, 1995), Amer. Math. Soc., Providence, RI, 1997, 189--212.


\bibitem{RW} I. Raeburn and D. P. Williams, Morita Equivalence and Continuous-Trace $C^*$-Algebras, Mathematical Surveys and Monographs, 60 (American Mathematical Society, Providence, RI, 1998).



\bibitem{SZ2} S. Zahmatkesh, {\it The Nica-Toeplitz algebras of dynamical systems over abelian lattice-ordered groups as full corners},
Houston J. Math. {\bf 47} (2021), no. 1, 115--149.

\bibitem{SZ} S. Zahmatkesh, {\it The Partial-isometric crossed products by semigroups of endomorphisms are Morita-Equivalent to crossed products by groups}, New Zealand J. Math. {\bf 47} (2017), 121--139.



\end{thebibliography}
\end{document}